\pdfoutput=1
\documentclass[11pt]{article}

\usepackage[a4paper,margin=28mm]{geometry}

\usepackage[T1]{fontenc}
\usepackage[utf8]{inputenc}
\usepackage{lmodern}
\usepackage{amsmath,amssymb,amsthm,mathtools}
\numberwithin{equation}{section}
\usepackage{bm}
\usepackage{mathrsfs}
\usepackage{comment}

\usepackage{graphicx}
\usepackage{booktabs}
\usepackage{array}
\usepackage{enumitem}
\usepackage{tikz}
\usetikzlibrary{arrows.meta,calc,positioning}

\usepackage[colorlinks=true,linkcolor=blue,citecolor=blue,urlcolor=blue]{hyperref}
\usepackage[nameinlink,capitalize]{cleveref}

\theoremstyle{plain}
\newtheorem{theorem}{Theorem}[section]
\newtheorem{lemma}[theorem]{Lemma}
\newtheorem{proposition}[theorem]{Proposition}
\newtheorem{corollary}[theorem]{Corollary}

\theoremstyle{definition}
\newtheorem{definition}[theorem]{Definition}

\newtheorem{example}[theorem]{Example}

\theoremstyle{remark}
\newtheorem{remark}[theorem]{Remark}

\newcommand{\R}{\mathbb{R}}

\newcommand{\diff}{\mathop{}\!\mathrm{d}}

\newcommand{\Span}{\operatorname{span}}

\newcommand{\Sph}{\mathbb{S}}

\newcommand{\aff}{\operatorname{aff}}
\newcommand{\dist}{\operatorname{dist}}
\newcommand{\conv}{\operatorname{conv}}

\theoremstyle{definition}
\newtheorem{convention}[theorem]{Convention}

\title{Exact characterisation of maximum-angle conditions for spherical finite element meshes}

\author{
  Hiroki Ishizaka\\
  Team FEM, Matsuyama, Japan\\
  E-mail: \texttt{h.ishizaka005@gmail.com}
}

\date{}

\begin{document}
\maketitle

\begin{abstract}
Maximum-angle conditions are standard finite-element mesh hypotheses that permit anisotropic triangles excluded by minimum-angle or shape-regularity assumptions. For exact spherical triangles, however, the angles of the chordal affine core and the intrinsic spherical angles need not coincide, while radial geometry introduces curvature-scale distortion. We give an algebraic characterisation of the intrinsic spherical maximum-angle condition through a dimensionless quantity computed from the three vertex vectors. A uniform positive lower bound on this quantity is equivalent to a uniform spherical maximum-angle bound and requires neither spherical-angle nor spherical-area evaluation. We identify the support-plane geometry linking the chordal circumradius to radial distortion and derive sharp comparisons between the intrinsic spherical and chordal semi-regularity parameters. In particular, a locality-independent comparison holds with sharp constant $2/\sqrt3$ and a characterised equality case. An area-based parameter is shown to be smaller than the spherical semi-regularity parameter, with sharp constant one in the local flat limit. For spherical finite-element meshes, the vertex criterion implies uniform chordal semi-regularity with an explicit constant, while relative refinement makes the radial-distortion factors converge uniformly to one. Thus, the intrinsic criterion, together with relative refinement, provides the geometric controls used in anisotropic finite-element analysis without imposing a minimum-angle or shape-regularity condition.
\end{abstract}
\medskip
\noindent\textbf{Keywords.}\\
Maximum-angle condition; semi-regularity condition; exact spherical triangle;\\
spherical finite-element mesh; radial projection.

\medskip
\noindent\textbf{2020 Mathematics Subject Classification.}
Primary 65N30; Secondary 51M09.

\section{Introduction}
\label{sec:introduction}
Mesh geometry enters finite-element analysis through interpolation, stability and approximation constants. In two dimensions, the classical minimum-angle condition is stronger than is needed for many purposes. Synge~\cite{Synge1957} identified the largest angle as the relevant quantity for linear interpolation, and Babu\v ska and Aziz~\cite{BabuskaAziz1976} established the maximum-angle condition in finite-element analysis. Related formulations were developed by Jamet~\cite{Jamet1976} and K\v r\'i\v zek~\cite{Krizek1991}, and the same principle appears in anisotropic, mixed and nonconforming approximation \cite{ApelDobrowolski1992,Apel1999,AcostaDuran1999}. Its main geometric advantage is that a minimum angle may tend to zero while the largest angle remains uniformly separated from $\pi$. 

The anisotropic interpolation framework developed in \cite{IshizakaKobayashiTsuchiya2021,IshizakaKobayashiTsuchiya2023} is based on the semi-regularity condition employed in those works, which is equivalent in two dimensions to the maximum-angle condition and allows strongly anisotropic elements. Under this condition, anisotropic interpolation error estimates have been established for Lagrange and Crouzeix--Raviart elements \cite{IshizakaKobayashiTsuchiya2021,IshizakaKobayashiTsuchiya2023} and for Raviart--Thomas interpolation~\cite{Ishizaka2022RT}. In three dimensions, the equivalence between the semi-regularity condition and the maximum-angle condition for tetrahedra was established in \cite{IshizakaKobayashiSuzukiTsuchiya2021}.

For curved surface elements, an additional geometric issue arises. The angles of the planar triangle spanned by the vertices need not coincide with the intrinsic surface angles, and the map from the planar core to the surface may introduce additional distortion. Surface and curved finite-element analyses therefore employ geometric assumptions adapted to the discrete surface or the element map \cite{CiarletRaviart1972,Ciarlet1978,Bernardi1989,Dziuk1988,Demlow2009,DziukElliott2013}. In the exact-curved setting, Ishizaka~\cite{Ishizaka2026ExactCurved} used a two-stage element map that separates the affine core from the curved correction, allowing affine scaling and curvature effects to be analysed separately. Maximum-angle properties have also been studied for piecewise-planar surface meshes~\cite{OlshanskiiReuskenXu2013}, while finite-element methods on spherical geodesic grids have a long history~\cite{Giraldo1997}. The setting considered here is more specific: each element is an exact geodesic spherical triangle.

Let $K\subset\Sph_R^2$ have vertices $p_1,p_2,p_3$, with their convex hull as its chordal affine core. Chordal and geodesic side lengths have the same ordering, but the corresponding chordal and spherical interior angles need not agree at finite curvature scale. Furthermore, the supporting plane of the chordal core may approach the sphere centre and produce strong radial distortion even when the planar triangle has good shape. The main question is therefore whether the intrinsic spherical maximum-angle condition can be characterised directly from the vertex coordinates without imposing a minimum-angle condition.

The following result answers this question. If $p_1$ is opposite a longest geodesic edge and $\alpha_K$ is the largest spherical interior angle, then
\begin{align}
  \frac{
    R\,|\det(p_1,p_2,p_3)|
  }{
    |p_1\times p_2|\,|p_1\times p_3|
  }
  =
  \sin\alpha_K.
  \label{eq:intro-determinant-criterion}
\end{align}
Thus, the intrinsic spherical maximum-angle condition is exactly equivalent to a uniform positive lower bound for a quantity computed directly from the vertex vectors. The left-hand side is invariant under a rescaling of the sphere radius, so $R$ may be removed by normalising the three vertices. No spherical angle, local chart or spherical-area computation is needed. We emphasise that we do not impose the minimum-angle or the shape-regularity condition on the mesh partition.

We also relate this intrinsic criterion to the chordal and radial geometry. The geometric length scale of the chordal core is exactly four times its Euclidean circumradius, and its circumradius, support-plane distance and radial-distortion factor satisfy exact identities. The intrinsic spherical semi-regularity parameter is then compared with the circumradius parameter, which coincides with the chordal semi-regularity parameter. In particular, the global comparison has the sharp constant $2/\sqrt3$ and requires no spherical-locality assumption. Therefore, spherical semi-regularity implies chordal semi-regularity globally. The reverse implication fails for coarse families, but holds under uniform spherical locality. A sharper locality-dependent comparison also identifies the optimal unit-prefactor power $1/2$ of the radial-distortion factor.

We also introduce an area-based intrinsic parameter. It is strictly smaller than the spherical semi-regularity parameter, with sharp constant one in the local flat limit, and admits two-sided bounds in terms of the circumradius parameter and powers of the radial-distortion factor. In these comparisons, the spherical semi-regularity parameter measures maximum-angle degeneration, whereas the radial-distortion factor measures the ratio of the sphere radius to the distance from the sphere centre to the support plane of the chordal core.

For spherical finite-element meshes, a uniform positive lower bound for the vertex factor gives uniform chordal semi-regularity with an explicit constant. Under relative refinement $q_h\to0$, the radial-locality parameters tend uniformly to zero and the radial-distortion factors converge uniformly to one. The resulting chordal semi-regularity and radial-distortion control are associated, respectively, with the affine and radial stages of the parametrisation from a reference triangle to an exact spherical element. The present paper is restricted to this geometric setting. Anisotropic Lagrange interpolation on exact spherical triangles is left to a future study.

The remainder of the paper is organised as follows. Section~\ref{sec:geometry} develops the geometry of exact spherical triangles. Section~\ref{sec:angles} establishes the vertex-based maximum-angle characterisation, and Section~\ref{sec:geometric-comparisons} develops the comparison theory. Section~\ref{sec:fem-interpretation} presents the finite-element mesh consequences, and Section~\ref{sec:conclusion} concludes the paper.

\section{Geometry of an exact spherical triangle}
\label{sec:geometry}
This section introduces an exact spherical triangle together with its chordal affine core.  We use the same factorised viewpoint as in the exact-curved finite-element framework of \cite{Ishizaka2026ExactCurved}. The affine core provides an auxiliary Euclidean representation, whereas the radial correction carries the spherical geometry.  The intrinsic spherical semi-regularity condition itself will be defined directly from the curved triangle $K$ in Section~\ref{sec:angles}. The chordal quantities introduced here serve as exact comparison and finite-element bridge parameters.

\subsection{The sphere and geodesic distance}
\label{subsec:spherical-calculus}
Let $R>0$ and define
\begin{align*}
  \Sph_R^2
  :=
  \{x\in\R^3:|x|=R\}.
\end{align*}
For $x\in\Sph_R^2$, we write
\begin{align*}
  n(x)
  :=
  \frac{x}{R},
  \quad
  T_x\Sph_R^2
  :=
  \{\xi\in\R^3:\xi\cdot n(x)=0\}.
\end{align*}
The geodesic distance on $\Sph_R^2$ is
\begin{align*}
  d_{\Sph}(x,y)
  :=
  R\arccos\!\left(\frac{x\cdot y}{R^2}\right),
  \quad
  x,y\in\Sph_R^2.
\end{align*}
If $x\neq-y$, the minimizing geodesic joining $x$ and $y$ is the unique shorter great-circle arc. A subset $A\subset\Sph_R^2$ is called \emph{geodesically convex} if it contains no pair of antipodal points and contains the shorter great-circle arc joining every pair of its points.

\subsection{Construction of an exact spherical triangle}
Let $p_1,p_2,p_3\in\Sph_R^2$ be non-collinear and set
\begin{align*}
  T:=\conv\{p_1,p_2,p_3\}\subset\R^3.
\end{align*}
For a chordal triangle $T$, we denote its Euclidean area by $|T|_2$. We assume
\begin{align}
  0\notin\aff(T),
  \label{eq:non-great-circle-assumption}
\end{align}
equivalently $\det(p_1,p_2,p_3)\neq0$. Thus, the vertices do not lie on a common great circle. Let $\nu_T$ be the unit normal oriented so that
\begin{align}
  \aff(T)=\{y\in\R^3:\nu_T\cdot y=d_T\},
  \quad
  d_T:=\dist(0,\aff(T))>0.
  \label{eq:support-plane-normalisation}
\end{align}
We set $c_T:=d_T\nu_T$ and $V_T:=\nu_T^\perp$. Then, $\aff(T)=c_T+V_T$, $\dim V_T=2$, and $\nu_T\cdot y=d_T>0$ for any $y\in T$. Furthermore, we define the radial map and the associated exact spherical triangle as
\begin{align}
  \Psi_T(y)&:=R\frac{y}{|y|},
  \label{eq:radial-projection}\\
  K&:=\Psi_T(T).
  \label{eq:exact-spherical-element}
\end{align}
We denote the spherical surface area of $K$ by $|K|_2:=\mathcal H^2(K)$, where $\mathcal H^2$ denotes the two-dimensional Hausdorff measure in $\mathbb R^3$.

\begin{lemma}[Basic properties of the radial correction]
  \label{lem:radial-basic}
  The map $\Psi_T:T\to K$ is a bijection with inverse
  \begin{align}
    G_T(x)=\frac{d_T}{\nu_T\cdot x}x.
    \label{eq:radial-inverse-map}
  \end{align}
  Each chordal edge is mapped bijectively onto the corresponding shorter great-circle arc. Furthermore,
  \begin{align*}
    K\subset\mathcal H_T^+:=\{x\in\Sph_R^2:\nu_T\cdot x>0\},
  \end{align*}
  and $K$ is geodesically convex.
\end{lemma}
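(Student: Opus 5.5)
The proof rests on one observation: every point $y\in T$ satisfies $\nu_T\cdot y=d_T>0$, so $y\neq0$ and $\Psi_T$ is well defined and continuous on $T$. I will first prove the bijectivity and the inverse formula, then the statement about edges, then the containment in $\mathcal H_T^+$, and finally geodesic convexity.

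\emph{Bijectivity and the inverse.} For $y\in T$ put $x=\Psi_T(y)$. Then $\nu_T\cdot x=R\,d_T/|y|>0$, which already gives $K\subset\mathcal H_T^+$. Solving for $|y|$ gives $|y|=R\,d_T/(\nu_T\cdot x)$, hence
\[
y=\frac{|y|}{R}\,x=\frac{d_T}{\nu_T\cdot x}\,x=G_T(x).
\]
This shows $G_T\circ\Psi_T=\mathrm{id}_T$, so $\Psi_T$ is injective. Surjectivity onto $K$ holds by definition of $K$. Therefore $\Psi_T:T\to K$ is a bijection with inverse $G_T$ as stated. Geometrically, each open ray from $0$ that meets $\aff(T)$ does so exactly once, because $\nu_T\cdot(ty)=t\,d_T$ is strictly monotone in $t>0$.

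\emph{Edges.} Take the chordal edge $[p_i,p_j]$. Its points $(1-t)p_i+tp_j$ lie in the plane $\Span\{p_i,p_j\}$ through $0$, and $p_i,p_j$ are linearly independent because $\det(p_1,p_2,p_3)\neq0$. Consequently $\Psi_T([p_i,p_j])$ lies on the great circle $\Span\{p_i,p_j\}\cap\Sph_R^2$. On this circle it equals the set of normalised positive combinations $a p_i+b p_j$ with $a,b\ge0$ and $(a,b)\neq(0,0)$. That set is exactly the shorter arc between $p_i$ and $p_j$, since $p_i\neq -p_j$ (both lie in $\mathcal H_T^+$). Bijectivity onto the arc follows from the general bijectivity just proved, together with the fact that every point $ap_i+bp_j$ of the cone meets the segment along its ray. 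So the edge maps bijectively onto the shorter great-circle arc.

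\emph{Geodesic convexity.} This is the only step needing an idea. Let $x,x'\in K$. Antipodality is excluded because $\nu_T\cdot x>0$ and $\nu_T\cdot(-x)<0$. Write $y=G_T(x)$ and $y'=G_T(x')$, both in $T$. The shorter arc from $x$ to $x'$ is $\{R\,z/|z| : z=(1-s)x+sx',\ s\in[0,1]\}$, and every such $z$ is nonzero. Each $z$ is a positive multiple of a point of the segment $[y,y']$, because the cone $\{ay+by' : a,b\ge0\}$ coincides with the cone $\{ax+bx' : a,b\ge0\}$. The segment $[y,y']$ lies in $T$ by convexity of $T$. Hence every point of the arc is $\Psi_T$ of a point of $T$, i.e. the arc lies in $K$. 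Put together, $K$ is geodesically convex, completing the proof; the one thing to check carefully is the cone identity, which holds since $x,y$ and $x',y'$ are positive multiples of each other.
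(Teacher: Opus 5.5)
Your proof is correct and follows essentially the same route as the paper: you obtain the inverse formula from $\nu_T\cdot x=Rd_T/|y|$, and you prove geodesic convexity by observing that each point of the shorter arc between $x,x'\in K$ is a positive multiple of a convex combination of $G_T(x),G_T(x')\in T$. Your cone identity is exactly what the paper writes out using the explicit slerp coefficients $a_s,b_s\geq0$ and an extension of $G_T$ to $\mathcal H_T^+$. The only cosmetic difference is in the edge step: you identify the image directly as the normalised cone $\{ap_i+bp_j: a,b\ge0\}$, whereas the paper argues from continuity, injectivity and endpoint preservation inside $\mathcal H_T^+$.
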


\begin{proof}
  If $\Psi_T(y_1)=\Psi_T(y_2)$, then $y_1=cy_2$ for some $c>0$. Taking the scalar product with $\nu_T$ gives $d_T=cd_T$, hence $c=1$. Thus, $\Psi_T$ is injective, and surjectivity follows from the definition of $K$. If $x=\Psi_T(y)$, then $\nu_T\cdot x=Rd_T/|y|$, which gives $G_T(x)=y$ and proves \eqref{eq:radial-inverse-map}.

  For an edge $e_{ij}=\conv\{p_i,p_j\}$, the points $p_i,p_j$ are not antipodal because $d_T>0$. Because $e_{ij}\subset\Span\{p_i,p_j\}$, radial projection maps it into the great circle through its endpoints. The image lies in $\mathcal H_T^+$, and continuity, injectivity and endpoint preservation show that it is precisely the shorter great-circle arc.

 Finally, extend $G_T$ to $\mathcal H_T^+$ by
\begin{align*}
  \widetilde G_T(x)
  :=
  \frac{d_T}{\nu_T\cdot x}x.
\end{align*}
Let $x_0,x_1\in K$. If $x_0=x_1$, there is nothing to prove. Otherwise, because $K\subset\mathcal H_T^+$, the points are not antipodal, and their central angle satisfies $0<\theta<\pi$. The shorter great-circle arc is
\begin{align*}
  x(s)=a_sx_0+b_sx_1,
  \quad
  a_s=\frac{\sin((1-s)\theta)}{\sin\theta},
  \quad
  b_s=\frac{\sin(s\theta)}{\sin\theta},
  \quad
  0\leq s\leq1.
\end{align*}
Because $a_s,b_s\geq0$, one has $\nu_T\cdot x(s)>0$. Writing $y_i=G_T(x_i)$ gives
\begin{align*}
  \widetilde G_T(x(s))
  =
  \frac{
    a_s(\nu_T\cdot x_0)y_0
    +
    b_s(\nu_T\cdot x_1)y_1
  }{
    a_s(\nu_T\cdot x_0)
    +
    b_s(\nu_T\cdot x_1)
  }
  \in T,
\end{align*}
because the right-hand side is a convex combination of $y_0$ and $y_1$. Since
\begin{align*}
  \Psi_T\bigl(\widetilde G_T(x(s))\bigr)=x(s),
\end{align*}
we obtain $x(s)\in K$. Therefore, $K$ is geodesically convex.
\end{proof}

The support-plane distance is also vertex-computable:
\begin{align}
  d_T
  =\frac{|\det(p_1,p_2,p_3)|}
         {|(p_2-p_1)\times(p_3-p_1)|}
  =\frac{|\det(p_1,p_2,p_3)|}{2|T|_2}.
  \label{eq:support-plane-distance-formula}
\end{align}

\subsection{Canonical labelling and planar angular parameters}
\label{subsec:planar-factorisation}

\begin{convention}[Canonical labelling of the chordal affine core]
  \label{conv:chordal-labelling}
  Choose a longest edge of $T$, denote its endpoints by $p_2$ and $p_3$, and let $p_1$ be the remaining vertex. For $i=1,2,3$, let $\beta_i$ denote the Euclidean interior angle of $T$ at $p_i$. Since $p_2p_3$ is a longest edge, its opposite angle is maximal, and we set
  \begin{align*}
    \beta_T
    :=
    \beta_1
    =
    \max_{1\leq i\leq3}\beta_i.
  \end{align*}
  Interchanging $p_2$ and $p_3$ if necessary, assume
  \begin{align*}
    |p_1-p_3|
    \leq
    |p_1-p_2|.
  \end{align*}
  We then set
  \begin{align*}
    h_T
    &:=
    |p_2-p_3|,
    \quad
    h_{T,1}
    :=
    |p_1-p_2|,
    \quad
    h_{T,2}
    :=
    |p_1-p_3|.
  \end{align*}
\end{convention}

The convention gives
\begin{align*}
  0<h_{T,2}\leq h_{T,1}\leq h_T<2h_{T,1},
\end{align*}
where the last inequality follows from the strict triangle inequality. If a longest edge or the ordering of the remaining two edges is not unique, any admissible labelling may be chosen; the geometric quantities and estimates below are unaffected by such a choice. This convention imposes no additional geometric restriction on $T$ and is consistent with the labelling used in the two-dimensional anisotropic interpolation theory of \cite{IshizakaKobayashiTsuchiya2021,IshizakaKobayashiTsuchiya2023}.

\subsection{The support plane and the radial distortion factor}
\label{subsec:radial-distortion}
We introduce only the first-order radial geometry needed later for the area comparison. We consider the ambient radial map
\begin{align*}
  \Psi:\R^3\setminus\{0\}\longrightarrow\Sph_R^2,
  \quad
  \Psi(y):=R\frac{y}{|y|},
\end{align*}
so that $\Psi_T=\Psi|_T$. For $y\in T$, we set
\begin{align*}
  x:=\Psi_T(y),
  \quad
  \hat y:=\frac{y}{|y|}=\frac{x}{R},
  \quad
  P_y:=I-\hat y\hat y^{\top}.
\end{align*}
Let $D\Psi(y):\R^3\to T_x\Sph_R^2$ be the Fr\'echet derivative of the ambient radial map $\Psi$ at $y$. Then, $P_y$ is the orthogonal projection onto $T_x\Sph_R^2$, and
\begin{align}
  D\Psi(y)\xi
  =
  \frac{R}{|y|}P_y\xi,
  \quad
  \xi\in\R^3.
  \label{eq:radial-first-derivative}
\end{align}
Furthermore,
\begin{align}
  d_T\leq |y|\leq R
  \quad
  \forall y\in T,
  \label{eq:radial-radius-range}
\end{align}
and we define the dimensionless radial distortion factor
\begin{align}
  \chi_T:=\frac{R}{d_T}>1.
  \label{eq:radial-distortion-factor}
\end{align}
The lower bound in \eqref{eq:radial-radius-range} follows from $\nu_T\cdot y=d_T$, whereas the upper bound follows from the convexity of the Euclidean norm. Because the three vertices are non-collinear, $\aff(T)$ cannot be tangent to $\Sph_R^2$; hence $d_T<R$ and $\chi_T>1$.

\begin{lemma}[Metric and area distortion of the radial correction]
  \label{lem:metric-distortion}
  Let $y\in T$, set $r:=|y|$, and restrict $D\Psi(y)$ to the chordal plane direction space $V_T$. Its two singular values are
  \begin{align}
    \sigma_{\min}(y)
    =\frac{Rd_T}{r^2},
    \quad
    \sigma_{\max}(y)
    =\frac{R}{r}.
    \label{eq:radial-singular-values}
  \end{align}
  Consequently,
  \begin{align}
    \chi_T^{-1}|\xi|
    \leq
    |D\Psi(y)\xi|
    \leq
    \chi_T|\xi|
    \quad
    \forall\xi\in V_T,
    \label{eq:radial-metric-bounds}
  \end{align}
  and the surface Jacobian of $\Psi_T$ with respect to the Euclidean area on $T$ and the spherical surface area on $K$ is
  \begin{align}
    J_T(y)
    =\frac{R^2d_T}{|y|^3},
    \quad
    \chi_T^{-1}\leq J_T(y)\leq\chi_T^2.
    \label{eq:radial-jacobian-bounds}
  \end{align}
\end{lemma}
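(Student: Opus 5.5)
The proof will be a direct computation in an orthonormal frame adapted to $y$. By \eqref{eq:radial-first-derivative}, $D\Psi(y)\xi=\frac{R}{r}P_y\xi$, so the singular values of $D\Psi(y)|_{V_T}$ are $R/r$ times those of $P_y|_{V_T}$. Since $P_y$ is the orthogonal projection onto $\hat y^\perp$, I will decompose $V_T$ with respect to $\hat y$. Write $\hat y=(\hat y\cdot\nu_T)\nu_T+w$ with $w\in V_T$. Here $\hat y\cdot\nu_T=d_T/r$, because $\nu_T\cdot y=d_T$. If $w\neq0$, set $e_1:=w/|w|\in V_T$ and let $e_2\in V_T$ be a unit vector orthogonal to $e_1$. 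If $w=0$, which happens only when $y=c_T$, then any orthonormal basis of $V_T$ will do.

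Next I compute the projection on this basis. Since $e_2\perp\nu_T$ and $e_2\perp w$, we have $e_2\perp\hat y$, so $P_ye_2=e_2$. On the other hand $P_ye_1=e_1-(e_1\cdot\hat y)\hat y$, and hence $|P_ye_1|^2=1-(e_1\cdot\hat y)^2=1-|w|^2=(\hat y\cdot\nu_T)^2=d_T^2/r^2$. Moreover $P_ye_1\perp P_ye_2$, because $P_ye_1\cdot e_2=e_1\cdot e_2-(e_1\cdot\hat y)(\hat y\cdot e_2)=0$. So $P_y|_{V_T}$ maps the orthonormal pair $(e_1,e_2)$ to orthogonal vectors of lengths $d_T/r$ and $1$. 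Its singular values are therefore $d_T/r\le1$, and multiplying by $R/r$ gives \eqref{eq:radial-singular-values}.

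For the bounds \eqref{eq:radial-metric-bounds} I will use \eqref{eq:radial-radius-range}, namely $d_T\le r\le R$. For the upper bound, $\sigma_{\max}=R/r\le R/d_T=\chi_T$. For the lower bound, $\sigma_{\min}=Rd_T/r^2\ge Rd_T/R^2=d_T/R=\chi_T^{-1}$.

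The surface Jacobian is the product of the two singular values, $J_T=\sigma_{\min}\sigma_{\max}=R^2d_T/r^3$. Here I am using the standard area formula for the injective Lipschitz map $\Psi_T$ of Lemma~\ref{lem:radial-basic}, which gives $\mathcal H^2$ on $K$ as the pullback with Jacobian $\sqrt{\det(D\Psi^{\top}D\Psi|_{V_T})}$. Again by $d_T\le r\le R$, this gives $J_T\le R^2d_T/d_T^3=\chi_T^2$ and $J_T\ge R^2d_T/R^3=\chi_T^{-1}$.

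The only delicate point is the orthogonality $P_ye_1\perp P_ye_2$, since this is what makes the singular values read off directly, together with the degenerate case $w=0$. The rest is routine.
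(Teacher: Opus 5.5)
Your proposal is correct and follows essentially the same route as the paper. The paper also works in the orthonormal frame of $V_T$ adapted to the $V_T$-component of $y=d_T\nu_T+z$ (your $e_1=w/|w|$ equals its $z/|z|$). It shows that $P_y$ fixes the orthogonal direction and shrinks $e_1$ to length $d_T/r$ with orthogonal images, then derives the bounds from $d_T\le r\le R$ and takes the Jacobian as the product of the two singular values.
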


\begin{proof}
  We write
  \begin{align*}
    y=d_T\nu_T+z,
    \quad z\in V_T.
  \end{align*}
  If $z\neq0$, we choose an orthonormal basis of $V_T$ given as
  \begin{align*}
    e_{\mathrm{rad}}:=\frac{z}{|z|},
    \quad
    e_{\mathrm{tan}}\perp e_{\mathrm{rad}}.
  \end{align*}
  Because $y\cdot e_{\mathrm{tan}}=0$,
  \begin{align*}
    P_y e_{\mathrm{tan}}=e_{\mathrm{tan}}.
  \end{align*}
  Furthermore,
  \begin{align*}
    |P_y e_{\mathrm{rad}}|^2
    =
    1-\frac{|z|^2}{r^2}
    =
    \frac{d_T^2}{r^2},
  \end{align*}
  and
  \begin{align*}
    (P_y e_{\mathrm{rad}})
    \cdot
    (P_y e_{\mathrm{tan}})
    =0.
  \end{align*}
  Therefore, the images of this orthonormal basis are orthogonal and have lengths $d_T/r$ and $1$. Thus, the singular values of $P_y|_{V_T}$ are $d_T/r$ and $1$. If $z=0$, then $r=d_T$ and $P_y|_{V_T}=I_{V_T}$, so the same conclusion holds. Multiplication by the factor $R/r$ in \eqref{eq:radial-first-derivative} shows that the singular values of $D\Psi(y)|_{V_T}$ are
  \begin{align*}
    \frac{Rd_T}{r^2},
    \quad
    \frac{R}{r},
  \end{align*}
  which proves \eqref{eq:radial-singular-values}.

  Because $d_T\leq r\leq R$,
  \begin{align*}
    \frac{Rd_T}{r^2}
    \geq
    \frac{d_T}{R}
    =
    \chi_T^{-1},
    \quad
    \frac{R}{r}
    \leq
    \frac{R}{d_T}
    =
    \chi_T.
  \end{align*}
  Therefore, \eqref{eq:radial-metric-bounds} follows.

  The surface Jacobian is the product of these two singular values; therefore,
  \begin{align*}
    J_T(y)
    =
    \frac{R^2d_T}{r^3}.
  \end{align*}
  Finally, using $d_T\leq r\leq R$ gives
  \begin{align*}
    \frac{d_T}{R}
    \leq
    J_T(y)
    \leq
    \frac{R^2}{d_T^2},
  \end{align*}
  that is,
  \begin{align*}
    \chi_T^{-1}
    \leq
    J_T(y)
    \leq
    \chi_T^2.
  \end{align*}
\end{proof}

\subsection{The chordal maximum-angle parameter}
Under Convention~\ref{conv:chordal-labelling}, define
\begin{align}
  H_T
  &:=
  \frac{h_{T,1}h_{T,2}}{|T|_2}h_T,
  \quad
  \mu_T
  :=
  \frac{H_T}{h_T}
  =
  \frac{h_{T,1}h_{T,2}}{|T|_2}.
  \label{eq:chordal-geometric-parameters}
\end{align}
The parameter $H_T$ was introduced to obtain anisotropic interpolation estimates with constants that depend explicitly on simplex geometry; see \cite{IshizakaKobayashiTsuchiya2021,IshizakaKobayashiTsuchiya2023}.

Because
\begin{align*}
  |T|_2
  =
  \frac12h_{T,1}h_{T,2}\sin\beta_T,
\end{align*}
we have
\begin{align}
  \mu_T
  =
  \frac{2}{\sin\beta_T},
  \quad
  H_T
  =
  4\mathcal R_T,
  \label{eq:mu-circumradius-identities}
\end{align}
where $\mathcal R_T$ is the circumradius of $T$; the second identity follows from the extended law of sines. Thus, $\mu_T\ge2$, and a uniform bound for $\mu_T$ is exactly the maximum-angle condition on the chordal affine cores.

\begin{definition}[Chordally semi-regular family]
  \label{def:chordal-semiregularity}
  Let $\{\mathscr K_h\}_{h>0}$ be a family of exact spherical triangles and let $T_K$ be the chordal affine core associated with $K\in\mathscr K_h$. The family is called \emph{chordally semi-regular} if there exists $\gamma_0<\infty$, independent of $h$ and $K$, such that
  \begin{align}
    \mu_{T_K}\leq\gamma_0
    \quad
    \forall K\in\mathscr K_h.
    \label{eq:chordal-semi-regular-condition}
  \end{align}
\end{definition}

\begin{remark}[Maximum-angle versus minimum-angle conditions]
  \label{rem:maximum-vs-minimum-angle}
  Because $\mu_T=4\mathcal R_T/h_T$, chordal semi-regularity is a circumradius-to-diameter, equivalently maximum-angle, condition. It is strictly weaker than usual shape regularity. For example, triangles with angles $\varepsilon$, $\pi/2-\varepsilon$ and $\pi/2$ remain chordally semi-regular while their minimum angle tends to zero. After a suitable similarity scaling, such triangles can also be realised as chordal cores of exact spherical triangles.
\end{remark}

\subsection{Spherical locality associated with the chordal bridge parameter}
The chordal parameter $\mu_T=H_T/h_T$ controls the maximum-angle shape of the affine core, whereas
\begin{align*}
  \chi_T:=\frac{R}{d_T}
\end{align*}
measures the radial distortion associated with its supporting plane. Recall that $c_T=d_T\nu_T$ is the orthogonal projection of the origin onto $\aff(T)$ and that $H_T/4=\mathcal R_T$ is the circumradius of $T$. The following identity relates these two geometric mechanisms exactly.

\begin{theorem}[Exact support-plane identity]
  \label{thm:support-plane-localisation}
  For $i=1,2,3$,
  \begin{align}
    |p_i-c_T|
    =
    \mathcal R_T
    =
    \frac{H_T}{4}.
    \label{eq:support-plane-vertex-distance}
  \end{align}
  Consequently,
  \begin{align}
    d_T^2+\mathcal R_T^2
    &=
    R^2,
    \quad
    d_T^2
    =
    R^2-\frac{H_T^2}{16},
    \quad
    0<H_T<4R,
    \quad
    \chi_T
    =
    \left(
      1-\frac{H_T^2}{16R^2}
    \right)^{-1/2}.
    \label{eq:support-plane-localisation}
  \end{align}
\end{theorem}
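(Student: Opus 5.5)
The approach is Pythagoras in the support plane. Since $c_T=d_T\nu_T$ is the orthogonal projection of $0$ onto $\aff(T)$, for each vertex $p_i\in\aff(T)$ the vector $p_i-c_T$ lies in $V_T=\nu_T^\perp$. It is therefore orthogonal to $c_T$, and
\begin{align*}
  R^2=|p_i|^2=|c_T+(p_i-c_T)|^2=d_T^2+|p_i-c_T|^2,\quad i=1,2,3.
\end{align*}
So $|p_i-c_T|=\sqrt{R^2-d_T^2}$ is the same for all three vertices. Hence $c_T$ is a point of the plane $\aff(T)$ equidistant from $p_1,p_2,p_3$.

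Within a plane, the point equidistant from three non-collinear points is unique: it is the intersection of two perpendicular bisector lines, and these are not parallel. That point is the circumcentre of $T$, so $c_T$ is the circumcentre and $|p_i-c_T|=\mathcal R_T$. Combining this with \eqref{eq:mu-circumradius-identities}, namely $H_T=4\mathcal R_T$, gives \eqref{eq:support-plane-vertex-distance}.

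The consequences then follow directly.
\begin{itemize}
  \item The display above gives $d_T^2+\mathcal R_T^2=R^2$, hence $d_T^2=R^2-H_T^2/16$.
  \item Positivity $H_T>0$ holds by non-collinearity.
  \item The bound $H_T<4R$ is equivalent to $d_T>0$, which is assumption \eqref{eq:non-great-circle-assumption}.
  \item Finally, $\chi_T=R/d_T=R\,(R^2-H_T^2/16)^{-1/2}=(1-H_T^2/(16R^2))^{-1/2}$.
\end{itemize}

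The only step needing care is identifying $c_T$ with the circumcentre. This requires both that $c_T\in\aff(T)$, which holds by construction, and the uniqueness of the planar equidistant point. Uniqueness follows because the difference of the equations $|y-p_1|^2=|y-p_2|^2$ and $|y-p_1|^2=|y-p_3|^2$ gives two independent linear conditions on $y\in\aff(T)$.
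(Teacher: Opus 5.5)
Your proof is correct and follows the paper's argument: the orthogonal decomposition $p_i=c_T+(p_i-c_T)$ with $p_i-c_T\in V_T$ shows that $c_T$ is equidistant from the three vertices, so $c_T$ is the circumcentre, and $H_T=4\mathcal R_T$ together with $d_T>0$ gives the remaining identities. Your explicit justification that the planar equidistant point is unique only makes precise the paper's step ``this is the circumcircle of $T$''.
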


\begin{proof}
  Because $p_i\in\aff(T)$ and $c_T=d_T\nu_T$,
  \begin{align*}
    \nu_T\cdot(p_i-c_T)=0.
  \end{align*}
  Therefore, $p_i-c_T\in V_T$, and
  \begin{align*}
    |p_i-c_T|^2
    =
    |p_i-d_T\nu_T|^2
    =
    R^2-d_T^2.
  \end{align*}
  Thus, the three non-collinear vertices lie on the circle $\aff(T)\cap\Sph_R^2$ centred at $c_T$. This is the circumcircle of $T$, so
  \begin{align*}
    |p_i-c_T|
    =
    \mathcal R_T
    =
    \frac{H_T}{4}.
  \end{align*}
  The orthogonal decomposition $p_i=c_T+(p_i-c_T)$ therefore gives
  \begin{align*}
    d_T^2+\mathcal R_T^2=R^2.
  \end{align*}
  Using $H_T=4\mathcal R_T$ gives the remaining identities. Because $\mathcal R_T>0$ and $d_T>0$, one has $0<H_T<4R$.
\end{proof}

\subsection{Intrinsic circumradius geometry and spherical locality}
\label{subsec:intrinsic-circumradius}
For $i\neq j$, we define the central angle associated with the geodesic edge joining $p_i$ and $p_j$ as
\begin{align}
  \theta_{ij}
  &:=
  \frac{d_{\Sph}(p_i,p_j)}{R}
  =
  \arccos\!\left(\frac{p_i\cdot p_j}{R^2}\right)
  \in(0,\pi),
  \label{eq:spherical-edge-angle}
\end{align}
and we set
\begin{align}
  \theta_K
  :=
  \max_{1\leq i<j\leq3}\theta_{ij}.
  \label{eq:largest-spherical-side-angle}
\end{align}
We also introduce the dimensionless chordal locality parameter
\begin{align}
  \vartheta_T
  :=
  \frac{\mathcal R_T}{R}
  =
  \frac{H_T}{4R}.
  \label{eq:chordal-locality-parameter}
\end{align}
We set
\begin{align}
  q_K:=R\nu_T\in\Sph_R^2.
  \label{eq:spherical-circumcentre}
\end{align}
Because $\nu_T\cdot p_i=d_T$ for $i=1,2,3$,
\begin{align*}
  \frac{q_K\cdot p_i}{R^2}
  =
  \frac{d_T}{R},
\end{align*}
so the three vertices are equidistant from $q_K$. We therefore define their common angular distance from $q_K$ as
\begin{align}
  \omega_K
  :=
  \frac{d_{\Sph}(q_K,p_i)}{R}
  =
  \arccos\!\left(\frac{d_T}{R}\right)
  \in\left(0,\frac{\pi}{2}\right).
  \label{eq:spherical-angular-circumradius}
\end{align}
The point $q_K$ is the spherical circumcentre with the smaller angular circumradius. Indeed, if $q\in\Sph_R^2$ is equidistant from the three vertices, then
\begin{align*}
  q\cdot(p_2-p_1)
  =
  q\cdot(p_3-p_1)
  =
  0.
\end{align*}
Because $p_2-p_1$ and $p_3-p_1$ span $V_T$, one has $q\in V_T^\perp=\Span\{\nu_T\}$ and hence $q=\pm R\nu_T$. The second circumcentre $-q_K$ has angular radius $\pi-\omega_K>\pi/2$.

\begin{theorem}[Intrinsic circumradius representation]
  \label{thm:intrinsic-circumradius-representation}
  Let $K$ be a non-degenerate exact spherical triangle and let $T$ be its chordal affine core. Then,
  \begin{align}
    \cos\omega_K
    &=
    \frac{d_T}{R},
    \quad
    \sin\omega_K
    =
    \frac{\mathcal R_T}{R}
    =
    \frac{H_T}{4R},
    \label{eq:intrinsic-circumradius-support}
  \end{align}
  and
  \begin{align}
    h_T
    =
    2R\sin\frac{\theta_K}{2}.
    \label{eq:intrinsic-largest-side-chord}
  \end{align}
  We define
  \begin{align}
    \vartheta_K
    &:=
    \sin\omega_K,
    \quad
    \chi_K
    :=
    \sec\omega_K,
    \quad
    \mu_K^{\mathrm{circ}}
    :=
    2\,
    \frac{\sin\omega_K}{\sin(\theta_K/2)}.
    \label{eq:intrinsic-spherical-parameters}
  \end{align}
  Then, the exact bridge identities
  \begin{align}
    \vartheta_K
    =
    \vartheta_T,
    \quad
    \chi_K
    =
    \chi_T,
    \quad
    \mu_K^{\mathrm{circ}}
    =
    \mu_T
    \label{eq:intrinsic-chordal-bridge}
  \end{align}
  hold without any locality assumption.
\end{theorem}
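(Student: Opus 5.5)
The argument combines the definition \eqref{eq:spherical-angular-circumradius} of $\omega_K$ with the exact support-plane identity of Theorem~\ref{thm:support-plane-localisation}. I will handle the two identities in \eqref{eq:intrinsic-circumradius-support} first, then \eqref{eq:intrinsic-largest-side-chord}, and finally read off the bridge identities \eqref{eq:intrinsic-chordal-bridge}.

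\emph{Circumradius identities.} By definition, $\cos\omega_K=d_T/R$ with $\omega_K\in(0,\pi/2)$, so $\sin\omega_K=\sqrt{1-d_T^2/R^2}\ge0$. The identity $d_T^2+\mathcal R_T^2=R^2$ from \eqref{eq:support-plane-localisation} turns this into $\sin\omega_K=\mathcal R_T/R$, and $\mathcal R_T=H_T/4$ completes \eqref{eq:intrinsic-circumradius-support}.

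\emph{Chord identity.} For $x,y\in\Sph_R^2$ we have $|x-y|^2=2R^2-2x\cdot y=2R^2(1-\cos\theta)=4R^2\sin^2(\theta/2)$, where $\theta$ is the central angle. Hence $|p_i-p_j|=2R\sin(\theta_{ij}/2)$. The map $\theta\mapsto 2R\sin(\theta/2)$ is strictly increasing on $(0,\pi)$, so the longest chordal edge corresponds to the largest central angle. By Convention~\ref{conv:chordal-labelling} we then get $h_T=\max_{i<j}|p_i-p_j|=2R\sin(\theta_K/2)$. This monotonicity step is the only point needing care, because it is what makes $\theta_K$ consistent with the canonical labelling, including when there are ties.

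\emph{Bridge identities.} First, $\vartheta_K=\sin\omega_K=\mathcal R_T/R=\vartheta_T$ by \eqref{eq:chordal-locality-parameter}. Next, $\chi_K=1/\cos\omega_K=R/d_T=\chi_T$. Finally,
\[
\mu_K^{\mathrm{circ}}=\frac{2\,\mathcal R_T/R}{h_T/(2R)}=\frac{4\mathcal R_T}{h_T}=\frac{H_T}{h_T}=\mu_T,
\]
using \eqref{eq:mu-circumradius-identities}. No locality assumption enters, because the only inputs are \eqref{eq:non-great-circle-assumption}, which guarantees $d_T>0$, and the exact identities already established.
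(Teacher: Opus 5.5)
Your proposal is correct and follows the paper's proof essentially step for step. You use $\cos\omega_K=d_T/R$ by definition, then $\sin\omega_K=\mathcal R_T/R=H_T/(4R)$ from $d_T^2+\mathcal R_T^2=R^2$ and $\omega_K\in(0,\pi/2)$, then the chord formula with the strict monotonicity of $\theta\mapsto 2R\sin(\theta/2)$ to get $h_T$, and finally direct substitution for the bridge identities — the only addition is that you derive $|p_i-p_j|=2R\sin(\theta_{ij}/2)$ explicitly from $|x-y|^2=2R^2-2x\cdot y$, which the paper simply asserts.
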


\begin{proof}
  The identity $\cos\omega_K=d_T/R$ follows from the definition of $\omega_K$. From Theorem~\ref{thm:support-plane-localisation},
  \begin{align*}
    d_T^2+\mathcal R_T^2=R^2,
    \quad
    \mathcal R_T=\frac{H_T}{4}.
  \end{align*}
  Because $\omega_K\in(0,\pi/2)$,
  \begin{align*}
    \sin\omega_K
    =
    \sqrt{1-\frac{d_T^2}{R^2}}
    =
    \frac{\mathcal R_T}{R}
    =
    \frac{H_T}{4R}.
  \end{align*}
  For $i\neq j$,
  \begin{align*}
    |p_i-p_j|
    =
    2R\sin\frac{\theta_{ij}}{2}.
  \end{align*}
  Because $\theta\mapsto2R\sin(\theta/2)$ is strictly increasing on $(0,\pi)$, the longest chordal and geodesic edges have the same endpoints. Therefore,
  \begin{align*}
    h_T
    =
    2R\sin\frac{\theta_K}{2}.
  \end{align*}
  It follows that
  \begin{align*}
    \mu_K^{\mathrm{circ}}
    =
    2\frac{H_T/(4R)}{h_T/(2R)}
    =
    \frac{H_T}{h_T}
    =
    \mu_T.
  \end{align*}
  The remaining bridge identities follow from $\vartheta_T=H_T/(4R)$ and $\chi_T=R/d_T$.
\end{proof}

\begin{definition}[Spherical locality]
  \label{def:spherical-locality}
  A family $\mathscr K$ of exact spherical triangles is called \emph{uniformly spherically local} if there exists $\eta_0\in(0,1)$ such that
  \begin{align}
    \vartheta_K
    =
    \sin\omega_K
    \leq
    \eta_0
    \quad
    \forall K\in\mathscr K.
    \label{eq:spherical-locality-condition}
  \end{align}
  Because
  \begin{align*}
    \chi_K
    =
    (1-\vartheta_K^2)^{-1/2},
  \end{align*}
  this is equivalent to the existence of $\chi_0>1$ such that $\chi_K\leq\chi_0$ uniformly. One may take
  \begin{align*}
    \chi_0=(1-\eta_0^2)^{-1/2},
    \quad
    \eta_0=\sqrt{1-\chi_0^{-2}}.
  \end{align*}
\end{definition}

\begin{corollary}[Automatic spherical locality under chordally semi-regular refinement]
  \label{cor:refining-locality}
  Let $\{\mathscr K_h\}_{h>0}$ be a family of exact spherical triangles whose chordal affine cores satisfy
  \begin{align*}
    \sup_h\max_{K\in\mathscr K_h}\mu_{T_K}
    \leq
    \gamma_0
    <
    \infty.
  \end{align*}
  We define the relative chordal diameter as
  \begin{align}
    q_h
    :=
    \max_{K\in\mathscr K_h}\frac{h_{T_K}}{R}.
    \label{eq:relative-chordal-diameter}
  \end{align}
  If $q_h\to0$, then
  \begin{align}
    \max_{K\in\mathscr K_h}\vartheta_K
    &\leq
    \frac{\gamma_0}{4}q_h
    \longrightarrow0, \quad
    \max_{K\in\mathscr K_h}\chi_K
    \longrightarrow1.
    \label{eq:uniform-locality-convergence}
  \end{align}
\end{corollary}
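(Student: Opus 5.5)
The plan is to combine the bridge identity of Theorem~\ref{thm:intrinsic-circumradius-representation} with the support-plane identity of Theorem~\ref{thm:support-plane-localisation}. Both hold with no locality assumption, so the corollary should reduce to elementary pointwise inequalities. These inequalities are then made uniform over $K\in\mathscr K_h$.

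First, fix $K\in\mathscr K_h$ with chordal core $T=T_K$. By \eqref{eq:chordal-locality-parameter} and the definition $H_T=\mu_T h_T$ in \eqref{eq:chordal-geometric-parameters}, we can write
\begin{align*}
  \vartheta_K=\vartheta_T=\frac{H_T}{4R}=\frac{\mu_T}{4}\,\frac{h_T}{R}.
\end{align*}
The semi-regularity hypothesis gives $\mu_T\le\gamma_0$. The definition \eqref{eq:relative-chordal-diameter} gives $h_T/R\le q_h$. Together these yield $\vartheta_K\le\gamma_0q_h/4$ for every $K\in\mathscr K_h$. Taking the maximum over $K$ then proves the first claim in \eqref{eq:uniform-locality-convergence}, and letting $q_h\to0$ gives convergence to zero.

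Second, for the distortion factor I will use $\chi_K=\chi_T=(1-\vartheta_K^2)^{-1/2}$. This follows from \eqref{eq:support-plane-localisation} together with the bridge identity, as already recorded in Definition~\ref{def:spherical-locality}. The map $t\mapsto(1-t^2)^{-1/2}$ is increasing on $[0,1)$. So once $h$ is small enough that $\gamma_0q_h/4<1$, we get
\begin{align*}
  1<\max_{K\in\mathscr K_h}\chi_K\le\Bigl(1-\frac{\gamma_0^2q_h^2}{16}\Bigr)^{-1/2}\longrightarrow1.
\end{align*}
The lower bound comes from $\chi_T>1$ in \eqref{eq:radial-distortion-factor}.

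There is no real obstacle here. The only point needing care is that the maxima over $\mathscr K_h$ are bounded uniformly, using the single constant $\gamma_0$ and the monotonicity of the scalar function. One should also restrict to $h$ small enough that $\gamma_0q_h/4<1$. Outside that range the bound is vacuous, but it still does not contradict $\vartheta_K<1$.
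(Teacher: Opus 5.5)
Your proposal is correct and follows essentially the same route as the paper. You write $\vartheta_K=\frac{\mu_{T_K}}{4}\frac{h_{T_K}}{R}\le\frac{\gamma_0}{4}q_h$ via the bridge identities, then pass to $\chi_K=(1-\vartheta_K^2)^{-1/2}$ using monotonicity of $s\mapsto(1-s^2)^{-1/2}$ on $[0,1)$; your explicit bound $\bigl(1-\gamma_0^2q_h^2/16\bigr)^{-1/2}$ for small $h$ simply makes that last step quantitative.
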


\begin{proof}
  From the bridge identities,
  \begin{align*}
    \vartheta_K
    =
    \frac{H_{T_K}}{4R}
    =
    \frac{\mu_{T_K}}{4}\frac{h_{T_K}}{R}
    \leq
    \frac{\gamma_0}{4}q_h.
  \end{align*}
  Therefore, $\max_K\vartheta_K\to0$. Because $s\mapsto(1-s^2)^{-1/2}$ is increasing on $[0,1)$,
  \begin{align*}
    \max_{K\in\mathscr K_h}\chi_K
    =
    \left(
      1-
      \bigl(\max_{K\in\mathscr K_h}\vartheta_K\bigr)^2
    \right)^{-1/2}
    \longrightarrow1.
  \end{align*}
\end{proof}

\begin{example}[Good chordal shape without spherical locality]
  \label{ex:coarse-equilateral}
  Let $\delta\in(0,R)$ and set
  \begin{align*}
    r_\delta:=\sqrt{R^2-\delta^2}.
  \end{align*}
  We choose an equilateral triangle on the circle $\Sph_R^2\cap\{x_3=\delta\}$. Then,
  \begin{align*}
    \mu_T=\frac{4}{\sqrt3},
    \quad
    h_T=\sqrt3\,r_\delta,
    \quad
    H_T=4r_\delta,
  \end{align*}
  whereas $d_T=\delta$. Consequently,
  \begin{align*}
    \vartheta_T
    =
    \frac{r_\delta}{R}
    \uparrow1,
    \quad
    \chi_T
    =
    \frac{R}{\delta}
    \longrightarrow\infty
    \quad
    (\delta\downarrow0).
  \end{align*}
  Thus, uniform chordal semi-regularity alone does not imply spherical locality for coarse elements. Indeed, $h_T/R\to\sqrt3$, so this family is not refining.
\end{example}

\begin{example}[Relative refinement and spherical locality do not imply chordal semi-regularity]
  \label{ex:refining-nonsemiregular}
  Let $\varepsilon\in(0,1)$ and set
  \begin{align*}
    r_\varepsilon:=R\varepsilon,
    \quad
    d_\varepsilon:=R\sqrt{1-\varepsilon^2}.
  \end{align*}
  We choose three points on the support circle in the plane $x_3=d_\varepsilon$ at polar angles $0,-\varepsilon,\varepsilon$, with $p_1$ corresponding to the angle $0$. Their chord lengths are
  \begin{align*}
    |p_2-p_3|
    =
    2R\varepsilon\sin\varepsilon,
    \quad
    |p_1-p_2|
    =
    |p_1-p_3|
    =
    2R\varepsilon\sin(\varepsilon/2).
  \end{align*}
  Thus, $p_2p_3$ is the longest edge, and the Euclidean angle opposite it is
  \begin{align*}
    \beta_\varepsilon
    =
    \pi-\varepsilon.
  \end{align*}
  Consequently,
  \begin{align*}
    h_{T_\varepsilon}
    =
    2R\varepsilon\sin\varepsilon
    \sim
    2R\varepsilon^2,
    \quad
    \mu_{T_\varepsilon}
    =
    \frac{2}{\sin\varepsilon}
    \sim
    \frac{2}{\varepsilon},
  \end{align*}
  while
  \begin{align*}
    H_{T_\varepsilon}
    =
    4R\varepsilon,
    \quad
    \vartheta_{T_\varepsilon}
    =
    \varepsilon,
    \quad
    \chi_{T_\varepsilon}
    =
    (1-\varepsilon^2)^{-1/2}.
  \end{align*}
  Therefore,
  \begin{align*}
    \frac{h_{T_\varepsilon}}{R}\to0,
    \quad
    \vartheta_{T_\varepsilon}\to0,
    \quad
    \chi_{T_\varepsilon}\to1,
  \end{align*}
  whereas
  \begin{align*}
    \mu_{T_\varepsilon}\to\infty,
    \quad
    \beta_\varepsilon\to\pi.
  \end{align*}
  Thus, relative refinement together with asymptotically vanishing radial distortion does not imply chordal semi-regularity; the degeneration is angular rather than radial.
\end{example}

\section{Vertex-based characterisation of spherical maximum angles}
\label{sec:angles}
The purpose of this section is to characterise the intrinsic spherical maximum-angle condition directly in terms of the three vertex vectors. The chordal affine core is used only to fix the canonical labelling. The resulting spherical semi-regularity condition is intrinsic to the curved triangle $K$.

\subsection{Spherical angles and side--angle ordering}
\label{subsec:spherical-angle-representation}
Let $i,j,k\in\{1,2,3\}$ be mutually distinct. Recall the central angles $\theta_{ij}$ from \eqref{eq:spherical-edge-angle}. Let $\alpha_i$ denote the spherical interior angle of $K$ at $p_i$, and set
\begin{align*}
  \alpha_K:=\max_{1\leq i\leq3}\alpha_i.
\end{align*}
Because $K$ is a non-degenerate geodesically convex spherical triangle,
\begin{align}
  0<\alpha_i<\pi,
  \quad
  i=1,2,3.
  \label{eq:spherical-angle-range}
\end{align}
The initial unit tangent at $p_i$ to the shorter great-circle arc directed towards $p_j$ is
\begin{align}
  t_{ij}
  :=
  \frac{
    p_j/R-\cos\theta_{ij}\,p_i/R
  }{
    \sin\theta_{ij}
  }.
  \label{eq:vertex-tangent-direction}
\end{align}
Because $t_{ij}\cdot p_i=0$ and $|t_{ij}|=1$, one has $t_{ij}\in T_{p_i}\Sph_R^2$. The spherical interior angle at $p_i$ is therefore characterised by
\begin{align}
  \cos\alpha_i
  =
  t_{ij}\cdot t_{ik}.
  \label{eq:spherical-angle-tangent}
\end{align}

We first introduce the spherical sine law in a form adapted to the vertex representation.

\begin{lemma}[Spherical sine law]
  \label{lem:spherical-sine-law}
  For mutually distinct $i,j,k\in\{1,2,3\}$,
  \begin{align}
    \frac{\sin\alpha_i}{\sin\theta_{jk}}
    =
    \frac{\sin\alpha_j}{\sin\theta_{ik}}
    =
    \frac{\sin\alpha_k}{\sin\theta_{ij}}.
    \label{eq:spherical-sine-law}
  \end{align}
\end{lemma}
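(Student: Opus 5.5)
The plan is to prove the sine law through a vertex-vector identity for $\sin\alpha_i$, chosen so that the resulting expression is visibly symmetric in $i,j,k$. All vertices are normalised by writing $u_m:=p_m/R\in\Sph^2$. The tangent vectors $t_{ij},t_{ik}$ from \eqref{eq:vertex-tangent-direction} are unit vectors in $T_{p_i}\Sph_R^2$, and by \eqref{eq:spherical-angle-tangent} the angle between them is $\alpha_i\in(0,\pi)$. Hence
\begin{align*}
  \sin\alpha_i=|t_{ij}\times t_{ik}|.
\end{align*}

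The next step is to compute this cross product explicitly. Expanding the definition gives
\begin{align*}
  \sin\theta_{ij}\sin\theta_{ik}\,(t_{ij}\times t_{ik})
  =(u_j-\cos\theta_{ij}u_i)\times(u_k-\cos\theta_{ik}u_i).
\end{align*}
The right-hand side expands to $u_j\times u_k-\cos\theta_{ik}\,u_j\times u_i-\cos\theta_{ij}\,u_i\times u_k$. Both $t_{ij}$ and $t_{ik}$ are orthogonal to $u_i$, so their cross product is parallel to $u_i$. It therefore suffices to take the scalar product with $u_i$. The two terms containing $u_i$ in a cross product vanish under this scalar product, which leaves
\begin{align*}
  \sin\theta_{ij}\sin\theta_{ik}\,\sin\alpha_i=|u_i\cdot(u_j\times u_k)|=\frac{|\det(p_1,p_2,p_3)|}{R^3}.
\end{align*}
Here $\sin\alpha_i>0$ is used, and the right-hand side is nonzero by \eqref{eq:non-great-circle-assumption}.

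Dividing by $\sin\theta_{ij}\sin\theta_{ik}\sin\theta_{jk}$, which is positive because each $\theta\in(0,\pi)$, yields
\begin{align*}
  \frac{\sin\alpha_i}{\sin\theta_{jk}}=\frac{|\det(p_1,p_2,p_3)|}{R^3\sin\theta_{12}\sin\theta_{13}\sin\theta_{23}}.
\end{align*}
The right-hand side is symmetric in all indices, so \eqref{eq:spherical-sine-law} follows. As a byproduct, this is essentially the identity \eqref{eq:intro-determinant-criterion}, because $R^2\sin\theta_{1j}=|p_1\times p_j|$.

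The only delicate point is the first step: $\sin\alpha_i=|t_{ij}\times t_{ik}|$ requires that the interior angle of $K$ equals the angle between the tangent vectors, rather than its supplement or reflex angle. This is guaranteed by \eqref{eq:spherical-angle-tangent} together with \eqref{eq:spherical-angle-range}, which in turn rest on the geodesic convexity from Lemma~\ref{lem:radial-basic}. Since only the absolute value enters, orientation causes no difficulty. The remaining steps are routine vector algebra.
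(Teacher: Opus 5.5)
Your proposal is correct and follows the paper's own proof essentially step for step. Both arguments write $\sin\alpha_i=|n_i\cdot(t_{ij}\times t_{ik})|=|\det(n_i,n_j,n_k)|/(\sin\theta_{ij}\sin\theta_{ik})$ and then conclude from the permutation symmetry of $|\det(n_1,n_2,n_3)|/(\sin\theta_{12}\sin\theta_{13}\sin\theta_{23})$.
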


\begin{proof}
  We set $n_i:=p_i/R$. From \eqref{eq:vertex-tangent-direction},
  \begin{align*}
    t_{ij}
    =
    \frac{
      n_j-(n_i\cdot n_j)n_i
    }{
      \sin\theta_{ij}
    }.
  \end{align*}
  Because $t_{ij},t_{ik}\in n_i^\perp$ are unit vectors forming the angle $\alpha_i$,
  \begin{align*}
    \sin\alpha_i
    =
    \left|
      n_i\cdot(t_{ij}\times t_{ik})
    \right|
    =
    \frac{
      |n_i\cdot(n_j\times n_k)|
    }{
      \sin\theta_{ij}\sin\theta_{ik}
    }.
  \end{align*}
  Therefore,
  \begin{align*}
    \frac{\sin\alpha_i}{\sin\theta_{jk}}
    =
    \frac{
      |\det(n_i,n_j,n_k)|
    }{
      \sin\theta_{ij}
      \sin\theta_{ik}
      \sin\theta_{jk}
    }.
  \end{align*}
  The right-hand side is invariant under permutations of $i,j,k$, which proves \eqref{eq:spherical-sine-law}.
\end{proof}

The chordal and geodesic side lengths have the same ordering. Indeed,
\begin{align}
  |p_i-p_j|
  =
  2R\sin\frac{\theta_{ij}}2,
  \label{eq:chord-geodesic-ordering}
\end{align}
and $\theta\mapsto2R\sin(\theta/2)$ is strictly increasing on $(0,\pi)$. It remains to relate the spherical side lengths to their opposite interior angles.

\begin{lemma}[Side--angle ordering]
  \label{lem:spherical-side-angle-ordering}
  For mutually distinct $i,j,k\in\{1,2,3\}$,
  \begin{align*}
    \theta_{jk}>\theta_{ik}
    \quad\Longleftrightarrow\quad
    \alpha_i>\alpha_j,
  \end{align*}
  and
  \begin{align*}
    \theta_{jk}=\theta_{ik}
    \quad\Longleftrightarrow\quad
    \alpha_i=\alpha_j.
  \end{align*}
  Consequently, a vertex opposite a longest spherical side realises a largest spherical interior angle.
\end{lemma}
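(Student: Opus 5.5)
We want: in a non-degenerate, geodesically convex spherical triangle (all sides $\theta_{ij}\in(0,\pi)$, angles in $(0,\pi)$), the larger side is opposite the larger angle, with equality matching equality. The sine law alone is not enough, since $\sin$ is not monotone on $(0,\pi)$. We therefore work with cosines via the spherical law of cosines, which follows from \eqref{eq:spherical-angle-tangent} and \eqref{eq:vertex-tangent-direction}. Write $a=\theta_{jk}$, $b=\theta_{ik}$, $c=\theta_{ij}$ and $n_\ell=p_\ell/R$. Expanding $t_{ij}\cdot t_{ik}$ gives
\begin{align*}
  \cos\alpha_i=\frac{\cos a-\cos b\cos c}{\sin b\sin c},
  \quad
  \cos\alpha_j=\frac{\cos b-\cos a\cos c}{\sin a\sin c}.
\end{align*}

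Since $\cos$ is strictly decreasing on $(0,\pi)$, the claim $\alpha_i>\alpha_j\Leftrightarrow a>b$ is equivalent to $\cos\alpha_j-\cos\alpha_i>0\Leftrightarrow a>b$. Put both fractions over the positive denominator $\sin a\sin b\sin c$. The numerator is then
\begin{align*}
  N=\sin b(\cos b-\cos a\cos c)-\sin a(\cos a-\cos b\cos c).
\end{align*}
I would rearrange this as
\begin{align*}
  N=(\sin b\cos b-\sin a\cos a)+\cos c(\sin a\cos b-\cos a\sin b).
\end{align*}
Using $\sin b\cos b-\sin a\cos a=\tfrac12(\sin2b-\sin2a)=-\cos(a+b)\sin(a-b)$ and $\sin a\cos b-\cos a\sin b=\sin(a-b)$, this becomes
\begin{align*}
  N=\sin(a-b)\bigl(\cos c-\cos(a+b)\bigr).
\end{align*}

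The key step, and the main obstacle, is showing that the second factor is strictly positive. This requires $c<a+b$ together with $a+b+c<2\pi$, which are the strict triangle inequality and the perimeter bound for a non-degenerate spherical triangle.

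To prove $c<a+b$, I would use the strict triangle inequality for $d_{\Sph}$. Equality would force $p_k$ onto the minimizing arc from $p_i$ to $p_j$, making the three vertices lie on a great circle. That contradicts $\det(p_1,p_2,p_3)\neq0$ from \eqref{eq:non-great-circle-assumption}.

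To prove $a+b+c<2\pi$, I would apply the same strict triangle inequality to the triangle $p_i,p_j,-p_k$. Its sides are $c$, $\pi-b$ and $\pi-a$, and it is still non-degenerate because its determinant is $\pm\det(p_1,p_2,p_3)\neq0$. The inequality $c<(\pi-a)+(\pi-b)$ then gives $a+b+c<2\pi$.

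With both bounds in hand, the two cases finish the argument. If $a+b\le\pi$, then $0<c<a+b\le\pi$, so $\cos c>\cos(a+b)$ because $\cos$ decreases on $[0,\pi]$. If $a+b>\pi$, then $c<2\pi-(a+b)<\pi$, and $\cos(a+b)=\cos(2\pi-a-b)<\cos c$.

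Finally, $|a-b|<\pi$, so $\sin(a-b)$ has the sign of $a-b$ and vanishes exactly when $a=b$. Hence $\operatorname{sign}(\cos\alpha_j-\cos\alpha_i)=\operatorname{sign}(a-b)$, which gives both the strict equivalence and the equality case.

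For the consequence, let $p_i$ be opposite a longest side. Applying the equivalence to each of the other two vertices gives $\alpha_i\ge\alpha_j$ and $\alpha_i\ge\alpha_k$, so $\alpha_i=\alpha_K$.
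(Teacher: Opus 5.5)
Your proof is correct, but it takes a different route from the paper. The paper combines the spherical sine law with the cosine laws for the sides $a$ and $b$. Subtracting these and writing $\sin a=\lambda\sin A$, $\sin b=\lambda\sin B$ gives $(1+\cos c)(\cos a-\cos b)=-\lambda\sin c\,\sin(A-B)$. Its prefactors are positive simply because $c\in(0,\pi)$ and $\lambda>0$, so no further geometric input is needed.

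You never use the sine law. Instead you compare $\cos\alpha_j-\cos\alpha_i$ directly and factor the numerator as $\sin(a-b)\bigl(\cos c-\cos(a+b)\bigr)$, and this factorisation is right. The cost is that the second factor is not obviously positive. You therefore have to bring in the strict spherical triangle inequality and the perimeter bound $a+b+c<2\pi$, the latter via the antipodal triangle $p_i,p_j,-p_k$. You also rely on the equality case of the triangle inequality on the sphere, which you assert rather than prove: a broken minimizing path must be a great-circle arc.

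You can avoid all of this. The cosine law at the third vertex gives $\cos c=\cos a\cos b+\sin a\sin b\cos\alpha_k$, hence
\[
\cos c-\cos(a+b)=\sin a\sin b\,(1+\cos\alpha_k)>0,
\]
since $0<\alpha_k<\pi$. With this one line your argument needs only the cosine law. The paper's version instead reuses the sine law proved in the preceding lemma, so its positivity comes for free.
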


\begin{proof}
  We put
  \begin{align*}
    a:=\theta_{jk},
    \quad
    b:=\theta_{ik},
    \quad
    c:=\theta_{ij},
    \quad
    A:=\alpha_i,
    \quad
    B:=\alpha_j.
  \end{align*}
  From Lemma~\ref{lem:spherical-sine-law}, there exists $\lambda>0$ such that
  \begin{align*}
    \sin a=\lambda\sin A,
    \quad
    \sin b=\lambda\sin B.
  \end{align*}
  The spherical cosine laws for the sides $a$ and $b$ give
  \begin{align*}
    \cos a
    =
    \cos b\cos c
    +
    \sin b\sin c\cos A,
  \end{align*}
  and
  \begin{align*}
    \cos b
    =
    \cos a\cos c
    +
    \sin a\sin c\cos B.
  \end{align*}
  Subtracting these identities gives
  \begin{align*}
    (1+\cos c)(\cos a-\cos b)
    &=
    \sin c
    \left(
      \sin b\cos A-\sin a\cos B
    \right)
    \\
    &=
    -\lambda\sin c\,\sin(A-B).
  \end{align*}
  Thus,
  \begin{align}
    (1+\cos c)(\cos a-\cos b)
    =
    -\lambda\sin c\,\sin(A-B).
    \label{eq:spherical-side-angle-order}
  \end{align}
  Because $c\in(0,\pi)$, both $1+\cos c$ and $\lambda\sin c$ are positive. Furthermore, cosine is strictly decreasing on $(0,\pi)$ and $A-B\in(-\pi,\pi)$. Therefore,
  \begin{align*}
    a>b
    \quad\Longleftrightarrow\quad
    A>B.
  \end{align*}
  If $a=b$, then \eqref{eq:spherical-side-angle-order} gives $\sin(A-B)=0$, hence $A=B$. Conversely, if $A=B$, the same identity gives $\cos a=\cos b$, and therefore $a=b$.
\end{proof}

By Convention~\ref{conv:chordal-labelling}, $p_1$ lies opposite a longest chordal edge. By \eqref{eq:chord-geodesic-ordering}, the same edge is a longest spherical side, and Lemma~\ref{lem:spherical-side-angle-ordering} therefore gives
\begin{align}
  \beta_1=\beta_T,
  \quad
  \alpha_1=\alpha_K.
  \label{eq:canonical-largest-angles}
\end{align}
This remains valid when a longest edge is not unique: the corresponding opposite angles are then equal and maximal.

Finally, the spherical excess formula gives
\begin{align*}
  \alpha_1+\alpha_2+\alpha_3
  =
  \pi+\frac{|K|_2}{R^2}
  >
  \pi.
\end{align*}
Together with \eqref{eq:spherical-angle-range}, this yields
\begin{align}
  \frac{\pi}{3}
  <
  \alpha_K
  <
  \pi.
  \label{eq:largest-spherical-angle-range}
\end{align}

\subsection{The vertex criterion and spherical semi-regularity}
\label{subsec:vertex-criterion}
The canonical vertex $p_1$ identified above allows the largest spherical angle to be characterised directly from the vertex vectors.

\begin{definition}[Spherical semi-regularity parameter]
  \label{def:spherical-semiregularity}
  We define the dimensionless algebraic factor
  \begin{align}
    t_K^{\mathrm{sr}}
    :=
    \frac{
      R\,|\det(p_1,p_2,p_3)|
    }{
      |p_1\times p_2|\,|p_1\times p_3|
    }.
    \label{eq:spherical-sr-factor}
  \end{align}
  Writing
  \begin{align*}
    n_i:=\frac{p_i}{R},
    \quad
    i=1,2,3,
  \end{align*}
  the same quantity has the radius-free representation
  \begin{align}
    t_K^{\mathrm{sr}}
    =
    \frac{
      |\det(n_1,n_2,n_3)|
    }{
      |n_1\times n_2|\,|n_1\times n_3|
    }.
    \label{eq:spherical-sr-factor-normalised}
  \end{align}
  We then define the \emph{spherical semi-regularity parameter} as
  \begin{align}
    \mu_K^{\mathrm{sr}}
    :=
    \frac{2}{t_K^{\mathrm{sr}}}.
    \label{eq:spherical-sr-parameter}
  \end{align}
\end{definition}

Since $K$ is contained in an open hemisphere, no two vertices are
antipodal. Hence the denominator in \eqref{eq:spherical-sr-factor} is
non-zero. Moreover, \eqref{eq:non-great-circle-assumption} gives
$\det(p_1,p_2,p_3)\neq0$, and therefore
\begin{align*}
  t_K^{\mathrm{sr}}>0.
\end{align*}
If more than one longest edge exists, any admissible choice of the opposite
vertex may be labelled $p_1$; Theorem~\ref{thm:spherical-sr-maximum-angle-identity}
below shows that the resulting value of $t_K^{\mathrm{sr}}$ is independent
of this choice.

\begin{theorem}[Exact algebraic characterisation of the spherical maximum angle]
  \label{thm:spherical-sr-maximum-angle-identity}
  \label{thm:spherical-maximum-angle-characterisation}
  For any non-degenerate exact spherical triangle,
  \begin{align}
    t_K^{\mathrm{sr}}
    =
    \sin\alpha_K,
    \quad
    \mu_K^{\mathrm{sr}}
    =
    \frac{2}{\sin\alpha_K}.
    \label{eq:spherical-sr-maximum-angle-identity}
  \end{align}
  This identity is exact and requires no spherical-locality assumption.
\end{theorem}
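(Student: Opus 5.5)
The plan is to reuse the computation already carried out in the proof of Lemma~\ref{lem:spherical-sine-law} at the canonical vertex $p_1$, and then to invoke \eqref{eq:canonical-largest-angles}. Throughout we work with the normalised vectors $n_i=p_i/R$. Since $t_K^{\mathrm{sr}}$ is homogeneous of degree zero in $(p_1,p_2,p_3)$ after the factor $R$ is included, the representations \eqref{eq:spherical-sr-factor} and \eqref{eq:spherical-sr-factor-normalised} agree: numerator and denominator both scale by $R^3$ (the numerator as $R\cdot R^3$, the denominator as $R^2\cdot R^2$). It therefore suffices to prove the identity on the unit sphere.

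First, we note that $|n_i\times n_j|=\sin\theta_{ij}$ because $\theta_{ij}\in(0,\pi)$. Next, we recall from the proof of Lemma~\ref{lem:spherical-sine-law} that the tangent vectors $t_{12},t_{13}$ from \eqref{eq:vertex-tangent-direction} are unit vectors in $n_1^\perp$ forming the interior angle $\alpha_1$. Hence $|t_{12}\times t_{13}|=\sin\alpha_1$, and this cross product is parallel to $n_1$, so
\begin{align*}
  \sin\alpha_1=|n_1\cdot(t_{12}\times t_{13})|.
\end{align*}
Substituting $t_{1j}=(n_j-(n_1\cdot n_j)n_1)/\sin\theta_{1j}$ and expanding by multilinearity of the triple product, every term containing $n_1$ twice vanishes. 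This gives
\begin{align*}
  \sin\alpha_1=\frac{|\det(n_1,n_2,n_3)|}{\sin\theta_{12}\sin\theta_{13}}=t_K^{\mathrm{sr}}.
\end{align*}

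The remaining step is to identify $\alpha_1$ with $\alpha_K$. By Convention~\ref{conv:chordal-labelling}, $p_1$ lies opposite a longest chordal edge. By \eqref{eq:chord-geodesic-ordering}, this edge is also a longest geodesic side, and Lemma~\ref{lem:spherical-side-angle-ordering} then yields $\alpha_1=\alpha_K$, which is recorded in \eqref{eq:canonical-largest-angles}. The formula $\mu_K^{\mathrm{sr}}=2/\sin\alpha_K$ follows from \eqref{eq:spherical-sr-parameter}, and $\sin\alpha_K>0$ by \eqref{eq:spherical-angle-range}. Independence of the labelling when a longest edge is not unique is then automatic, since every admissible $p_1$ yields $\sin$ of the same maximal angle $\alpha_K$.

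The only point needing care is the claim that the angle between $t_{12}$ and $t_{13}$ is exactly the interior angle $\alpha_1$, rather than its supplement or an exterior angle. This is the content of \eqref{eq:spherical-angle-tangent}, which follows from the geodesic convexity of $K$ (Lemma~\ref{lem:radial-basic}): the edges at $p_1$ are the shorter arcs, so $t_{12},t_{13}$ are their initial directions. Even if orientation were ambiguous, $\sin$ is invariant under $\alpha\mapsto\pi-\alpha$, so the identity holds regardless. No locality assumption enters at any step.
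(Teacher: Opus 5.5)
Your proof is correct and is essentially the paper's argument. The paper uses the unnormalised tangent projections $\tau_{1m}=P_1p_m=R\sin\theta_{1m}\,t_{1m}$, of length $|p_1\times p_m|/R$, instead of the unit tangents $t_{1m}$ and normalised vertices, but the triple-product computation and the final appeal to \eqref{eq:canonical-largest-angles} for $\alpha_1=\alpha_K$ are the same (one harmless slip: numerator and denominator scale by $R^4$, not $R^3$, as your own parenthetical shows).
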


\begin{proof}
  We set
  \begin{align*}
    n_1:=\frac{p_1}{R},
    \quad
    P_1:=I-n_1n_1^{\top},
  \end{align*}
  so that $P_1$ is the orthogonal projection onto $T_{p_1}\Sph_R^2=n_1^\perp$. Let $t_{1m}$, $m=2,3$, denote the unit tangent directions from \eqref{eq:vertex-tangent-direction}. Because
  \begin{align*}
    p_m
    =
    R\left(
      \cos\theta_{1m}\,n_1
      +
      \sin\theta_{1m}\,t_{1m}
    \right),
  \end{align*}
  the tangent projection of $p_m$ at $p_1$ is
  \begin{align}
    \tau_{1m}
    :=
    P_1p_m
    =
    R\sin\theta_{1m}\,t_{1m},
    \quad
    m=2,3.
    \label{eq:tangent-projection-geodesic-direction}
  \end{align}
  Because $\theta_{1m}\in(0,\pi)$, the coefficients in \eqref{eq:tangent-projection-geodesic-direction} are positive. Therefore, by \eqref{eq:canonical-largest-angles}, the angle between $\tau_{12}$ and $\tau_{13}$ is precisely $\alpha_1=\alpha_K$. Furthermore,
  \begin{align*}
    |\tau_{1m}|^2
    &=
    R^2-\frac{(p_1\cdot p_m)^2}{R^2}
    =
    \frac{|p_1\times p_m|^2}{R^2},
  \end{align*}
  and hence
  \begin{align*}
    |\tau_{1m}|
    =
    \frac{|p_1\times p_m|}{R}.
  \end{align*}
  Because $\tau_{12},\tau_{13}\in n_1^\perp$, their cross product is parallel to $n_1$. Furthermore,
  \begin{align*}
    n_1\cdot(\tau_{12}\times\tau_{13})
    &=
    n_1\cdot(p_2\times p_3)
    =
    \frac{\det(p_1,p_2,p_3)}{R}.
  \end{align*}
  Therefore,
  \begin{align*}
    |\tau_{12}\times\tau_{13}|
    =
    \frac{|\det(p_1,p_2,p_3)|}{R}.
  \end{align*}
  Consequently,
  \begin{align*}
    \sin\alpha_K
    &=
    \frac{
      |\tau_{12}\times\tau_{13}|
    }{
      |\tau_{12}|\,|\tau_{13}|
    }
    =
    \frac{
      R\,|\det(p_1,p_2,p_3)|
    }{
      |p_1\times p_2|\,|p_1\times p_3|
    }
    =
    t_K^{\mathrm{sr}}.
  \end{align*}
  The second identity follows immediately from $\mu_K^{\mathrm{sr}}=2/t_K^{\mathrm{sr}}$.
\end{proof}

\begin{corollary}[Computable spherical semi-regularity and the maximum-angle condition]
  \label{cor:spherical-sr-maximum-angle-equivalence}
  Let $\mathscr K$ be a family of exact spherical triangles. Then, the following are equivalent:
  \begin{enumerate}[label=\textup{(\roman*)}]
    \item there exists $c_{\mathrm{sr}}>0$ such that
    \begin{align}
      t_K^{\mathrm{sr}}
      \geq
      c_{\mathrm{sr}}
      \quad
      \forall K\in\mathscr K;
      \label{eq:spherical-sr-lower-condition}
    \end{align}

    \item there exists $\gamma_{\mathrm{sr}}<\infty$ such that
    \begin{align}
      \mu_K^{\mathrm{sr}}
      \leq
      \gamma_{\mathrm{sr}}
      \quad
      \forall K\in\mathscr K;
      \label{eq:spherical-semiregularity-condition}
    \end{align}

    \item there exists $\delta>0$, independent of the element, such that
    \begin{align}
      \alpha_K
      \leq
      \pi-\delta
      \quad
      \forall K\in\mathscr K.
      \label{eq:spherical-maximum-angle-condition}
    \end{align}
  \end{enumerate}
  Thus, \eqref{eq:spherical-sr-lower-condition} is a vertex-only algebraic condition exactly equivalent to the intrinsic spherical maximum-angle condition.
\end{corollary}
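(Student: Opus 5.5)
The plan is to deduce all three equivalences from the exact identity of Theorem~\ref{thm:spherical-sr-maximum-angle-identity}, namely $t_K^{\mathrm{sr}}=\sin\alpha_K$ and $\mu_K^{\mathrm{sr}}=2/\sin\alpha_K$. Together with the a priori range \eqref{eq:largest-spherical-angle-range}, $\pi/3<\alpha_K<\pi$, this reduces everything to elementary properties of the sine on $(\pi/3,\pi)$.

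First, (i)$\Leftrightarrow$(ii) is immediate from $\mu_K^{\mathrm{sr}}=2/t_K^{\mathrm{sr}}$ and $t_K^{\mathrm{sr}}>0$. Given (i), take $\gamma_{\mathrm{sr}}:=2/c_{\mathrm{sr}}$. Conversely, given (ii), take $c_{\mathrm{sr}}:=2/\gamma_{\mathrm{sr}}$.

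Next, for (iii)$\Rightarrow$(i), I use $\alpha_K\in(\pi/3,\pi-\delta]$. The sine is increasing on $[\pi/3,\pi/2]$ and decreasing on $[\pi/2,\pi)$, so
\begin{align*}
  \sin\alpha_K\geq\min\{\sin(\pi/3),\sin(\pi-\delta)\}=\min\{\sqrt3/2,\sin\delta\}.
\end{align*}
We may assume $\delta<2\pi/3$, since otherwise the hypothesis $\alpha_K\le\pi-\delta$ contradicts $\alpha_K>\pi/3$ and the family is empty. For $\delta\in(0,2\pi/3)$ we have $\sin\delta>0$, so $c_{\mathrm{sr}}:=\min\{\sqrt3/2,\sin\delta\}>0$ works.

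Finally, for (i)$\Rightarrow$(iii), assume $\sin\alpha_K\geq c_{\mathrm{sr}}$, where necessarily $c_{\mathrm{sr}}\le1$. If $\alpha_K\le\pi/2$, then trivially $\alpha_K\le\pi-\pi/2$. If $\alpha_K\in(\pi/2,\pi)$, then $\sin(\pi-\alpha_K)=\sin\alpha_K\ge c_{\mathrm{sr}}$ with $\pi-\alpha_K\in(0,\pi/2)$, which gives $\pi-\alpha_K\ge\arcsin c_{\mathrm{sr}}$. So $\delta:=\arcsin c_{\mathrm{sr}}\in(0,\pi/2]$ works in both cases.

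The only point needing care is this last step, where the non-monotonicity of the sine on $(0,\pi)$ must be handled. The case split at $\pi/2$ does this. The lower bound $\alpha_K>\pi/3$ is what makes (iii)$\Rightarrow$(i) work, because it prevents small angles from making $\sin\alpha_K$ small. All constants depend only on the given constants, not on the element.
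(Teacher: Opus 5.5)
Your proposal is correct and follows essentially the same route as the paper: both arguments rest on the identity $t_K^{\mathrm{sr}}=\sin\alpha_K=2/\mu_K^{\mathrm{sr}}$ together with $\pi/3<\alpha_K<\pi$. Both take $\delta=\arcsin c_{\mathrm{sr}}$ for the forward direction (the paper writes $\arcsin(2/\gamma_{\mathrm{sr}})$) and $c_{\mathrm{sr}}=\min\{\sqrt3/2,\sin\delta\}$ for the converse. The only cosmetic difference is that you dispose of $\delta\geq 2\pi/3$ by noting that the family would then be empty, whereas the paper simply replaces $\delta$ by $\min\{\delta,\pi/2\}$.
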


\begin{proof}
  The equivalence of \textup{(i)} and \textup{(ii)} follows directly from
  \begin{align*}
    \mu_K^{\mathrm{sr}}
    =
    \frac{2}{t_K^{\mathrm{sr}}}.
  \end{align*}
  From Theorem~\ref{thm:spherical-sr-maximum-angle-identity},
  \begin{align*}
    t_K^{\mathrm{sr}}
    =
    \sin\alpha_K,
    \quad
    \mu_K^{\mathrm{sr}}
    =
    \frac{2}{\sin\alpha_K}.
  \end{align*}

  Suppose \textup{(ii)} holds. Because $\mu_K^{\mathrm{sr}}\geq2$, necessarily
  $\gamma_{\mathrm{sr}}\geq2$. Therefore,
  \begin{align*}
    \sin\alpha_K
    \geq
    \frac{2}{\gamma_{\mathrm{sr}}}.
  \end{align*}
  We set
  \begin{align*}
    \delta
    :=
    \arcsin\left(
      \frac{2}{\gamma_{\mathrm{sr}}}
    \right)
    \in
    \left(0,\frac{\pi}{2}\right].
  \end{align*}
  Because $0<\alpha_K<\pi$, the inequality $\sin\alpha_K\geq\sin\delta$ implies
  \begin{align*}
    \alpha_K
    \leq
    \pi-\delta.
  \end{align*}
  Thus \textup{(iii)} holds. Conversely, suppose \textup{(iii)} holds. Replacing $\delta$ by $\min\{\delta,\pi/2\}$ if necessary, we may assume $0<\delta\leq\pi/2$. From
  \begin{align*}
    \frac{\pi}{3}
    <
    \alpha_K
    \leq
    \pi-\delta,
  \end{align*}
  we obtain
  \begin{align*}
    \sin\alpha_K
    \geq
    \min\left\{
      \frac{\sqrt3}{2},
      \sin\delta
    \right\}.
  \end{align*}
  Therefore,
  \begin{align*}
    t_K^{\mathrm{sr}}
    =
    \sin\alpha_K
    \geq
    \min\left\{
      \frac{\sqrt3}{2},
      \sin\delta
    \right\}
    >0,
  \end{align*}
  proving \textup{(i)}.
\end{proof}

\begin{remark}[Direct mesh-quality test]
  \label{rem:direct-spherical-mesh-test}
  No spherical angle needs to be evaluated to test \eqref{eq:spherical-sr-lower-condition} or \eqref{eq:spherical-semiregularity-condition}. First identify a longest edge by comparing the three chord lengths $|p_i-p_j|$. Because chordal and geodesic side lengths have the same ordering, choose any vertex opposite a longest edge and label it $p_1$. Then, compute $|p_1\times p_2|$, $|p_1\times p_3|$, and $|\det(p_1,p_2,p_3)|$, and evaluate \eqref{eq:spherical-sr-factor}. Thus, the exact spherical maximum-angle condition can be checked directly from the vertex vectors, without evaluating inverse trigonometric functions or spherical areas.
\end{remark}

\section{Comparison of intrinsic and chordal geometric parameters}
\label{sec:geometric-comparisons}
We compare the exact spherical semi-regularity parameter $\mu_K^{\mathrm{sr}}$ with the intrinsic circumradius parameter $\mu_K^{\mathrm{circ}}=\mu_T$ and with an area-based intrinsic quantity. The comparison separates maximum-angle degeneration from radial locality and provides the geometric bridge to the chordal affine core.

\subsection{Exact spherical--chordal angle relations}
\label{subsec:exact-angle-relations}
We introduce the relations between the spherical interior angles and the Euclidean angles of the chordal affine core.

\begin{lemma}[Localisation of spherical side angles]
  \label{lem:side-localisation}
  For any pair $i\neq j$,
  \begin{align}
    \theta_{ij}
    \leq
    2\omega_K
    =
    2\arccos(\chi_K^{-1}),
    \label{eq:spherical-edge-localisation}
  \end{align}
  or equivalently,
  \begin{align}
    \cos\frac{\theta_{ij}}2
    \geq
    \cos\omega_K
    =
    \frac{d_T}{R}
    =
    \chi_K^{-1}.
    \label{eq:half-angle-locality-bound}
  \end{align}
\end{lemma}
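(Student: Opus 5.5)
The bound \eqref{eq:spherical-edge-localisation} follows from the triangle inequality for the geodesic distance, applied through the spherical circumcentre $q_K$ of \eqref{eq:spherical-circumcentre}. By construction, every vertex lies at angular distance $\omega_K$ from $q_K$:
\begin{align*}
d_{\Sph}(q_K,p_i)=R\omega_K .
\end{align*}
Since $d_{\Sph}$ is a metric on $\Sph_R^2$, it follows that
\begin{align*}
R\theta_{ij}=d_{\Sph}(p_i,p_j)\le d_{\Sph}(p_i,q_K)+d_{\Sph}(q_K,p_j)=2R\omega_K .
\end{align*}
Dividing by $R$ gives $\theta_{ij}\le 2\omega_K$. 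The identity $\omega_K=\arccos(d_T/R)=\arccos(\chi_K^{-1})$ is exactly \eqref{eq:spherical-angular-circumradius} combined with $\chi_K=\chi_T=R/d_T$ from Theorem~\ref{thm:intrinsic-circumradius-representation}.

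If one prefers to avoid citing the triangle inequality for $d_{\Sph}$, a purely Euclidean route is available. Every chord of the circumcircle $\aff(T)\cap\Sph_R^2$, whose radius is $\mathcal R_T$ by Theorem~\ref{thm:support-plane-localisation}, has length at most $2\mathcal R_T$. Hence
\begin{align*}
2R\sin\frac{\theta_{ij}}2=|p_i-p_j|\le 2\mathcal R_T=2R\sin\omega_K .
\end{align*}
Here the left identity is \eqref{eq:chord-geodesic-ordering} and the right identity is \eqref{eq:intrinsic-circumradius-support}. Since $\theta_{ij}/2\in(0,\pi/2)$ and $\omega_K\in(0,\pi/2)$, and sine is increasing on $[0,\pi/2]$, this gives $\theta_{ij}/2\le\omega_K$.

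For the equivalent form \eqref{eq:half-angle-locality-bound}, observe that both $\theta_{ij}/2$ and $\omega_K$ lie in $[0,\pi/2]$. Cosine is strictly decreasing there, so $\theta_{ij}\le2\omega_K$ holds exactly when $\cos(\theta_{ij}/2)\ge\cos\omega_K$. The equalities $\cos\omega_K=d_T/R=\chi_K^{-1}$ then come from \eqref{eq:intrinsic-circumradius-support} and the bridge identity $\chi_K=\chi_T$.

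There is no real obstacle in this argument. The only care needed is to confirm that all the half-angles stay in the monotonicity range $[0,\pi/2]$. This is guaranteed by $\theta_{ij}\in(0,\pi)$ and $\omega_K\in(0,\pi/2)$.
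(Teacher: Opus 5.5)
Your proof is correct. Its main argument, the triangle inequality for $d_{\Sph}$ through the spherical circumcentre $q_K$ followed by monotonicity of cosine on $(0,\pi/2)$, is exactly the paper's proof; your alternative chord bound $|p_i-p_j|\le 2\mathcal R_T=2R\sin\omega_K$ is also valid.
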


\begin{proof}
  Because $q_K$ is the spherical circumcentre of $K$,
  \begin{align*}
    d_{\Sph}(q_K,p_i)
    =
    d_{\Sph}(q_K,p_j)
    =
    R\omega_K.
  \end{align*}
  The triangle inequality for the geodesic distance gives
  \begin{align*}
    R\theta_{ij}
    &=
    d_{\Sph}(p_i,p_j)
    \leq
    d_{\Sph}(p_i,q_K)
    +
    d_{\Sph}(q_K,p_j)
    =
    2R\omega_K.
  \end{align*}
  Therefore,
  \begin{align*}
    \theta_{ij}\leq2\omega_K.
  \end{align*}
  Because
  \begin{align*}
    0<
    \frac{\theta_{ij}}2
    \leq
    \omega_K
    <
    \frac{\pi}{2},
  \end{align*}
  the monotonicity of cosine gives
  \begin{align*}
    \cos\frac{\theta_{ij}}2
    \geq
    \cos\omega_K
    =
    \frac{d_T}{R}
    =
    \chi_K^{-1}.
  \end{align*}
\end{proof}

\begin{lemma}[Exact spherical--chordal angle relation]
  \label{lem:exact-angle-relation}
  For mutually distinct $i,j,k\in\{1,2,3\}$,
  \begin{align}
    \cos\beta_i
    &=
    \cos\frac{\theta_{ij}}2
    \cos\frac{\theta_{ik}}2\cos\alpha_i
    +
    \sin\frac{\theta_{ij}}2
    \sin\frac{\theta_{ik}}2,
    \label{eq:angle-cosine-relation}
  \end{align}
  and
  \begin{align}
    \frac{d_T}{R}\sin\beta_i
    &=
    \cos\frac{\theta_{ij}}2
    \cos\frac{\theta_{ik}}2
    \sin\alpha_i.
    \label{eq:angle-sine-relation}
  \end{align}
\end{lemma}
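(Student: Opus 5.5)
We compute $\cos\beta_i$ and $\sin\beta_i$ directly from the chordal edge vectors $p_j-p_i$ and $p_k-p_i$. Then we rewrite both the dot product and the cross product of these vectors in terms of the unit normal $n_i=p_i/R$ and the tangent directions $t_{ij},t_{ik}$ from \eqref{eq:vertex-tangent-direction}. As in the proof of Theorem~\ref{thm:spherical-sr-maximum-angle-identity}, each $p_m$ decomposes as $p_m=R(\cos\theta_{im}n_i+\sin\theta_{im}t_{im})$. Hence
\begin{align*}
  p_m-p_i
  =
  R\bigl((\cos\theta_{im}-1)n_i+\sin\theta_{im}t_{im}\bigr)
  =
  2R\sin\frac{\theta_{im}}2
  \Bigl(-\sin\frac{\theta_{im}}2\,n_i+\cos\frac{\theta_{im}}2\,t_{im}\Bigr),
\end{align*}
using the half-angle formulas. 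The bracket is a unit vector, and \eqref{eq:chord-geodesic-ordering} gives $|p_m-p_i|=2R\sin(\theta_{im}/2)$. So the unit chord direction is $u_{im}:=-s_m n_i+c_m t_{im}$, where $s_m=\sin(\theta_{im}/2)$ and $c_m=\cos(\theta_{im}/2)$.

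For \eqref{eq:angle-cosine-relation}, we use $\cos\beta_i=u_{ij}\cdot u_{ik}$. Expanding, with $n_i\cdot t_{im}=0$, $|n_i|=1$ and $t_{ij}\cdot t_{ik}=\cos\alpha_i$ from \eqref{eq:spherical-angle-tangent}, gives $s_js_k+c_jc_k\cos\alpha_i$. This is exactly the claim.

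For \eqref{eq:angle-sine-relation}, we compute $\sin\beta_i=|u_{ij}\times u_{ik}|$. Expanding the cross product gives
\begin{align*}
  u_{ij}\times u_{ik}
  =
  s_js_k\,(n_i\times n_i)
  -s_jc_k\,n_i\times t_{ik}
  -c_js_k\,t_{ij}\times n_i
  +c_jc_k\,t_{ij}\times t_{ik}.
\end{align*}
Computing its norm directly is messy. Instead we project onto the plane normal: $\sin\beta_i=|\nu_T\cdot(u_{ij}\times u_{ik})|$, because the cross product of two in-plane unit vectors is parallel to $\nu_T$. This is the step I expect to be the main obstacle. It is cleaner to avoid the expansion and use the triple product. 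Since $(p_j-p_i)\times(p_k-p_i)=p_i\times p_j+p_j\times p_k+p_k\times p_i$, its dot product with $p_i$ equals $\det(p_i,p_j,p_k)$. On the other hand, this vector equals $2|T|_2\,\nu_T$ up to sign, and $\nu_T\cdot p_i=d_T$. Therefore
\begin{align*}
  |p_j-p_i|\,|p_k-p_i|\sin\beta_i\, d_T
  =
  |\det(p_i,p_j,p_k)|,
\end{align*}
which is consistent with \eqref{eq:support-plane-distance-formula}.

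The proof of Theorem~\ref{thm:spherical-sr-maximum-angle-identity}, applied at the vertex $p_i$ (that argument did not use maximality), gives
\begin{align*}
  |\det(p_i,p_j,p_k)|
  =
  R^3\sin\theta_{ij}\sin\theta_{ik}\sin\alpha_i,
\end{align*}
because $|p_i\times p_m|=R^2\sin\theta_{im}$. We substitute $|p_m-p_i|=2R s_m$ and $\sin\theta_{im}=2s_mc_m$. The factors $4R^2s_js_k$ then cancel, which yields $d_T\sin\beta_i=R\,c_jc_k\sin\alpha_i$, that is, \eqref{eq:angle-sine-relation}. The cancellation requires $s_j,s_k>0$, which holds since $\theta_{im}\in(0,\pi)$.
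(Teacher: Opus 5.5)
Your proof is correct and follows the paper's route. The cosine relation comes from the same unit chord decomposition $u_{im}=\cos\frac{\theta_{im}}2\,t_{im}-\sin\frac{\theta_{im}}2\,n_i$. For the sine relation you, like the paper, evaluate the triple product of the vertex normal with the chord cross product twice: once through $\nu_T$, giving $d_T\sin\beta_i$, and once through the tangent geometry at $p_i$, giving $\sin\alpha_i$.

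The only difference is cosmetic. You work with the unnormalised product $p_i\cdot\bigl((p_j-p_i)\times(p_k-p_i)\bigr)=\det(p_i,p_j,p_k)$, import the sine-law identity $|\det(n_i,n_j,n_k)|=\sin\theta_{ij}\sin\theta_{ik}\sin\alpha_i$, and cancel $4R^2\sin\frac{\theta_{ij}}2\sin\frac{\theta_{ik}}2$. The paper instead expands $n_i\cdot(u_{ij}\times u_{ik})$ directly: every term containing $n_i$ drops out, leaving $\cos\frac{\theta_{ij}}2\cos\frac{\theta_{ik}}2\,n_i\cdot(t_{ij}\times t_{ik})$.
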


\begin{proof}
  We set
  \begin{align*}
    n_i:=\frac{p_i}{R}.
  \end{align*}
  From \eqref{eq:vertex-tangent-direction},
  \begin{align*}
    \frac{p_j}{R}
    =
    \cos\theta_{ij}\,n_i
    +
    \sin\theta_{ij}\,t_{ij}.
  \end{align*}
  Therefore,
  \begin{align*}
    p_j-p_i
    =
    R\left(
      (\cos\theta_{ij}-1)n_i
      +
      \sin\theta_{ij}\,t_{ij}
    \right).
  \end{align*}
  Because
  \begin{align*}
    |p_j-p_i|
    =
    2R\sin\frac{\theta_{ij}}2,
  \end{align*}
  the half-angle identities give
  \begin{align}
    \frac{p_j-p_i}{|p_j-p_i|}
    =
    \cos\frac{\theta_{ij}}2\,t_{ij}
    -
    \sin\frac{\theta_{ij}}2\,n_i.
    \label{eq:chord-direction-decomposition}
  \end{align}
  Taking the scalar product of \eqref{eq:chord-direction-decomposition} with its analogue for $p_k$ and using
  \begin{align*}
    t_{ij}\cdot n_i
    =
    t_{ik}\cdot n_i
    =
    0,
    \quad
    t_{ij}\cdot t_{ik}
    =
    \cos\alpha_i,
  \end{align*}
  proves \eqref{eq:angle-cosine-relation}.

  We set
  \begin{align*}
    u_{ij}
    :=
    \frac{p_j-p_i}{|p_j-p_i|},
    \quad
    u_{ik}
    :=
    \frac{p_k-p_i}{|p_k-p_i|}.
  \end{align*}
  Taking the scalar product with $n_i$ after crossing the two decompositions in \eqref{eq:chord-direction-decomposition} yields
  \begin{align*}
    n_i\cdot(u_{ij}\times u_{ik})
    =
    \cos\frac{\theta_{ij}}2
    \cos\frac{\theta_{ik}}2
    n_i\cdot(t_{ij}\times t_{ik}).
  \end{align*}
  Because $t_{ij}$ and $t_{ik}$ are unit tangent vectors forming the angle $\alpha_i$,
  \begin{align}
    \left|
      n_i\cdot(u_{ij}\times u_{ik})
    \right|
    =
    \cos\frac{\theta_{ij}}2
    \cos\frac{\theta_{ik}}2
    \sin\alpha_i.
    \label{eq:triple-product-identity}
  \end{align}
On the other hand, $u_{ij},u_{ik}\in V_T$. Therefore, their cross product is parallel to $\nu_T$, and because they are unit vectors forming the chordal angle $\beta_i$,
  \begin{align*}
    u_{ij}\times u_{ik}
    =
    \pm\sin\beta_i\,\nu_T.
  \end{align*}
  Furthermore,
  \begin{align*}
    n_i\cdot\nu_T
    =
    \frac{d_T}{R}>0.
  \end{align*}
  Therefore,
  \begin{align*}
    \left|
      n_i\cdot(u_{ij}\times u_{ik})
    \right|
    =
    \frac{d_T}{R}\sin\beta_i.
  \end{align*}
  Comparing this with \eqref{eq:triple-product-identity} proves \eqref{eq:angle-sine-relation}.
\end{proof}

\begin{theorem}[Quantitative comparison of chordal and spherical angle factors]
  \label{thm:angle-comparison}
  For $i=1,2,3$,
  \begin{align}
    \chi_K^{-1}\sin\beta_i
    \leq
    \sin\alpha_i
    \leq
    \chi_K\sin\beta_i.
    \label{eq:angle-two-sided-comparison}
  \end{align}
\end{theorem}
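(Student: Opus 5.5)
The plan is to read both inequalities directly off the exact sine relation \eqref{eq:angle-sine-relation} of Lemma~\ref{lem:exact-angle-relation}. Write $c_i:=\cos\frac{\theta_{ij}}2\cos\frac{\theta_{ik}}2$ for the product of half-angle cosines at $p_i$. Since $d_T/R=\chi_K^{-1}$ by Theorem~\ref{thm:intrinsic-circumradius-representation}, that lemma reads
\begin{align*}
  \chi_K^{-1}\sin\beta_i=c_i\sin\alpha_i .
\end{align*}
The whole argument then reduces to two-sided bounds on $c_i$.

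For the upper bound on $c_i$ we simply use $c_i\le1$. This gives $\chi_K^{-1}\sin\beta_i\le\sin\alpha_i$, which is the left inequality of \eqref{eq:angle-two-sided-comparison}; here we also use $\sin\alpha_i>0$ from \eqref{eq:spherical-angle-range}.

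For the lower bound on $c_i$ we apply Lemma~\ref{lem:side-localisation} to each factor. It gives $\cos\frac{\theta_{ij}}2\ge\chi_K^{-1}$, hence $c_i\ge\chi_K^{-2}$. Substituting into the identity yields $\chi_K^{-1}\sin\beta_i\ge\chi_K^{-2}\sin\alpha_i$, and multiplying by $\chi_K^{2}$ gives $\sin\alpha_i\le\chi_K\sin\beta_i$, the right inequality.

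I expect no real obstacle here. The only points to check are the sign and positivity facts already established. First, $\beta_i\in(0,\pi)$ because the chordal core is non-degenerate. Second, all the half-angles lie in $(0,\pi/2)$, so the cosines are positive and the inequalities can be multiplied without reversing direction. Third, $\chi_K=\chi_T$ by the bridge identity \eqref{eq:intrinsic-chordal-bridge}, so the statement may be read with either factor.
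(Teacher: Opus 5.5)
Your proposal is correct and follows essentially the same route as the paper. Both arguments read the two inequalities off the sine relation \eqref{eq:angle-sine-relation}, using $\cos\frac{\theta_{ij}}2\cos\frac{\theta_{ik}}2\le1$ for the lower bound and Lemma~\ref{lem:side-localisation} to bound this product below by $\chi_K^{-2}$ for the upper bound.
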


\begin{proof}
  Let $j$ and $k$ be the two indices different from $i$. From \eqref{eq:angle-sine-relation},
  \begin{align*}
    \sin\alpha_i
    =
    \chi_K^{-1}
    \frac{
      \sin\beta_i
    }{
      \cos(\theta_{ij}/2)\cos(\theta_{ik}/2)
    }.
  \end{align*}
  Both half-angle cosines are positive and their product is at most one. Therefore,
  \begin{align*}
    \sin\alpha_i
    \geq
    \chi_K^{-1}\sin\beta_i.
  \end{align*}
  From Lemma~\ref{lem:side-localisation},
  \begin{align*}
    \cos\frac{\theta_{ij}}2
    \cos\frac{\theta_{ik}}2
    \geq
    \chi_K^{-2}.
  \end{align*}
  Therefore,
  \begin{align*}
    \sin\alpha_i
    \leq
    \chi_K\sin\beta_i.
  \end{align*}
\end{proof}

\begin{theorem}[Exact comparison of circumradius and spherical semi-regularity parameters]
  \label{thm:exact-comparison}
  Set
  \begin{align*}
    a_T:=\frac{h_{T,1}}{2R},
    \quad
    b_T:=\frac{h_{T,2}}{2R},
    \quad
    0<b_T\leq a_T<1.
  \end{align*}
  Then,
  \begin{align}
    \frac{\mu_K^{\mathrm{circ}}}{\mu_K^{\mathrm{sr}}}
    =
    \frac{\mu_T}{\mu_K^{\mathrm{sr}}}
    =
    \frac{\sin\alpha_K}{\sin\beta_T}
    =
    \frac{
      1
    }{
      \chi_K
      \sqrt{(1-a_T^2)(1-b_T^2)}
    }.
    \label{eq:exact-comparison}
  \end{align}
  Equivalently, denoting by $\beta_2,\beta_3$ the two remaining chordal interior angles of $T$,
  \begin{align}
    \frac{\mu_K^{\mathrm{circ}}}{\mu_K^{\mathrm{sr}}}
    =
    \sqrt{
      \frac{
        1-\vartheta_K^2
      }{
        (1-\vartheta_K^2\sin^2\beta_2)
        (1-\vartheta_K^2\sin^2\beta_3)
      }
    }.
    \label{eq:exact-comparison-angles}
  \end{align}
\end{theorem}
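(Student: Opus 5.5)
The plan is to derive \eqref{eq:exact-comparison} from the sine relation \eqref{eq:angle-sine-relation} of Lemma~\ref{lem:exact-angle-relation} at the canonical vertex $i=1$. Under Convention~\ref{conv:chordal-labelling} we have $\beta_1=\beta_T$ and, by \eqref{eq:canonical-largest-angles}, $\alpha_1=\alpha_K$. Combining Theorem~\ref{thm:spherical-sr-maximum-angle-identity} with \eqref{eq:mu-circumradius-identities} and the bridge identity $\mu_K^{\mathrm{circ}}=\mu_T$ of Theorem~\ref{thm:intrinsic-circumradius-representation}, we get
\begin{align*}
  \frac{\mu_K^{\mathrm{circ}}}{\mu_K^{\mathrm{sr}}}
  =\frac{2/\sin\beta_T}{2/\sin\alpha_K}
  =\frac{\sin\alpha_K}{\sin\beta_T}.
\end{align*}
The lemma with $i=1$, $j=2$, $k=3$ then gives
\begin{align*}
  \frac{\sin\alpha_K}{\sin\beta_T}
  =\frac{d_T/R}{\cos(\theta_{12}/2)\cos(\theta_{13}/2)}
  =\frac{1}{\chi_K\cos(\theta_{12}/2)\cos(\theta_{13}/2)}.
\end{align*}
By \eqref{eq:chord-geodesic-ordering}, $\sin(\theta_{12}/2)=h_{T,1}/(2R)=a_T$ and $\sin(\theta_{13}/2)=b_T$. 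Since $\theta_{1m}/2\in(0,\pi/2)$, the cosines are positive, so $\cos(\theta_{12}/2)=\sqrt{1-a_T^2}$ and $\cos(\theta_{13}/2)=\sqrt{1-b_T^2}$. This yields the last expression in \eqref{eq:exact-comparison}. The bounds $0<b_T\le a_T<1$ follow from the convention and from $\theta_{1m}<\pi$.

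For \eqref{eq:exact-comparison-angles}, we express $a_T$ and $b_T$ through chordal angles using the planar extended law of sines in $T$. The side $h_{T,1}=|p_1-p_2|$ is opposite $p_3$, so $h_{T,1}=2\mathcal R_T\sin\beta_3$. Likewise $h_{T,2}=|p_1-p_3|$ is opposite $p_2$, so $h_{T,2}=2\mathcal R_T\sin\beta_2$. Because $\mathcal R_T/R=\vartheta_T=\vartheta_K$ by Theorem~\ref{thm:intrinsic-circumradius-representation}, we obtain $a_T=\vartheta_K\sin\beta_3$ and $b_T=\vartheta_K\sin\beta_2$. Finally, Definition~\ref{def:spherical-locality} gives $\chi_K^{-1}=\sqrt{1-\vartheta_K^2}$, and substituting these three expressions produces \eqref{eq:exact-comparison-angles}.

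No step is a serious obstacle; the argument is bookkeeping. The points that need care are the following:
\begin{enumerate}[label=\textup{(\alph*)}]
  \item The indices must be matched correctly: $h_{T,1}$ is opposite $\beta_3$ and $h_{T,2}$ is opposite $\beta_2$. The final formula is symmetric in $\beta_2,\beta_3$, so a mismatch would not show up in the result but would make the derivation wrong.
  \item The positive square-root branches must be justified by $\theta_{ij}\in(0,\pi)$.
  \item If a longest edge is not unique, the argument still holds because \eqref{eq:canonical-largest-angles} remains valid for any admissible labelling.
\end{enumerate}
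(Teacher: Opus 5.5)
Your proposal is correct and follows essentially the same route as the paper's proof. Like the paper, you apply the sine relation \eqref{eq:angle-sine-relation} at the canonical vertex $i=1$ (with $\alpha_1=\alpha_K$, $\beta_1=\beta_T$), express the half-angle cosines through $a_T,b_T$, and then use the extended Euclidean law of sines. Your index matching $h_{T,1}=2\mathcal R_T\sin\beta_3$, $h_{T,2}=2\mathcal R_T\sin\beta_2$ agrees with the paper, as does your use of $\chi_K^{-2}=1-\vartheta_K^2$.
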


\begin{proof}
  From \eqref{eq:canonical-largest-angles},
  \begin{align*}
    \beta_1=\beta_T,
    \quad
    \alpha_1=\alpha_K.
  \end{align*}
  Therefore, \eqref{eq:angle-sine-relation} with $i=1$ gives
  \begin{align*}
    \frac{\sin\alpha_K}{\sin\beta_T}
    =
    \frac{
      d_T/R
    }{
      \cos(\theta_{12}/2)
      \cos(\theta_{13}/2)
    }.
  \end{align*}
  Because
  \begin{align*}
    a_T
    =
    \frac{h_{T,1}}{2R}
    =
    \sin\frac{\theta_{12}}2,
    \quad
    b_T
    =
    \frac{h_{T,2}}{2R}
    =
    \sin\frac{\theta_{13}}2,
  \end{align*}
  and $\theta_{12},\theta_{13}\in(0,\pi)$,
  \begin{align*}
    \cos\frac{\theta_{12}}2
    =
    \sqrt{1-a_T^2},
    \quad
    \cos\frac{\theta_{13}}2
    =
    \sqrt{1-b_T^2}.
  \end{align*}
  Using $d_T/R=\chi_K^{-1}$ gives
  \begin{align*}
    \frac{\sin\alpha_K}{\sin\beta_T}
    =
    \frac{
      1
    }{
      \chi_K\sqrt{(1-a_T^2)(1-b_T^2)}
    }.
  \end{align*}
  Because
  \begin{align*}
    \frac{\mu_T}{\mu_K^{\mathrm{sr}}}
    =
    \frac{\sin\alpha_K}{\sin\beta_T},
    \quad
    \mu_K^{\mathrm{circ}}
    =
    \mu_T,
  \end{align*}
  this proves \eqref{eq:exact-comparison}.

  From the Euclidean sine law,
  \begin{align*}
    h_{T,1}
    =
    2\mathcal R_T\sin\beta_3,
    \quad
    h_{T,2}
    =
    2\mathcal R_T\sin\beta_2.
  \end{align*}
  Because
  $\vartheta_K=\mathcal R_T/R$,
  \begin{align*}
    a_T
    =
    \vartheta_K\sin\beta_3,
    \quad
    b_T
    =
    \vartheta_K\sin\beta_2.
  \end{align*}
  Substitution into \eqref{eq:exact-comparison}, together with $\chi_K^{-2}=1-\vartheta_K^2$, yields \eqref{eq:exact-comparison-angles}.
\end{proof}

\subsection{Sharp comparison bounds and spherical locality}
\label{subsec:sharp-comparisons}
The exact comparison above yields a locality-independent estimate with a sharp universal constant.

\begin{theorem}[Sharp global comparison of the circumradius and spherical semi-regularity parameters]
  \label{thm:global-semiregularity-comparison}
  \label{thm:sharp-spherical-chordal-bridge}
  For any non-degenerate exact spherical triangle,
  \begin{align}
    \mu_K^{\mathrm{circ}}
    \leq
    \frac{2}{\sqrt3}\,
    \mu_K^{\mathrm{sr}}.
    \label{eq:global-semiregularity-comparison}
  \end{align}
  The constant $2/\sqrt3$ is sharp. Equality holds if and only if the chordal affine core $T$ is equilateral and
  \begin{align*}
    \vartheta_K^2=\frac23.
  \end{align*}
\end{theorem}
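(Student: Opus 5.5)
The plan is to start from the exact representation \eqref{eq:exact-comparison-angles} of Theorem~\ref{thm:exact-comparison} and reduce the claim to an elementary extremal problem. Write $u:=\vartheta_K^2\in(0,1)$ and $s_m:=\sin\beta_m$ for $m=2,3$. Then \eqref{eq:global-semiregularity-comparison} is equivalent to
\begin{align*}
  F(u,\beta_2,\beta_3):=\frac{1-u}{(1-us_2^2)(1-us_3^2)}\leq\frac43 .
\end{align*}
This must hold over all admissible data: $u\in(0,1)$, and chordal angles with $\beta_2,\beta_3\leq\beta_1=\pi-\beta_2-\beta_3$, by Convention~\ref{conv:chordal-labelling}. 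By Theorem~\ref{thm:support-plane-localisation}, every circumradius ratio $u\in(0,1)$ is realised by some triangle shape, so the parameters $u$ and $(\beta_2,\beta_3)$ may be treated as independent. The labelling constraints imply $\beta_2,\beta_3<\pi/2$ and $\beta_2+\beta_3\leq 2\pi/3$.

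\textbf{Step 1: optimise over the shape for fixed $u$.} The goal is the bound
\begin{align*}
  (1-us_2^2)(1-us_3^2)\geq\Bigl(1-\tfrac34u\Bigr)^2,
\end{align*}
with equality only at $\beta_2=\beta_3=\pi/3$. Since $\beta_i\in(0,\pi/2)$, each factor is strictly decreasing in $\beta_i$. Hence for fixed $\sigma:=\beta_2+\beta_3$ one may increase the angles until the constraint becomes active, and the worst case pushes $\sigma$ up to $2\pi/3$. To make this precise, I would use the product-to-sum formulas with $c:=\cos(\beta_2-\beta_3)$:
\begin{align*}
  s_2^2+s_3^2=1-c\cos\sigma,
  \quad
  s_2s_3=\tfrac12(c-\cos\sigma).
\end{align*}
This gives the denominator as an explicit quadratic in $c$:
\begin{align*}
  D(c,\sigma)=1-u(1-c\cos\sigma)+\tfrac{u^2}{4}(c-\cos\sigma)^2 .
\end{align*}
I would then check that $D$ is minimised on the feasible set at $c=1$ (that is, $\beta_2=\beta_3=\sigma/2$) and $\sigma=2\pi/3$. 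Equivalently, in the symmetric case one shows $\sin(\sigma/2)\leq\sqrt3/2$ and uses the monotonicity of the symmetric product. This reduction to the symmetric, equilateral configuration is the step I expect to be the main obstacle. The sign of $\cos\sigma$ changes across $\sigma=\pi/2$, so the monotonicity in $c$ must be handled separately in the two ranges. The non-symmetric boundary cases, where $\beta_2$ or $\beta_3$ equals $\beta_1$, also have to be compared directly. If the $c$-monotonicity becomes unwieldy, the fallback is a Lagrange-type argument: show that $\beta\mapsto-\log(1-u\sin^2\beta)$ is such that the minimum of the product under the constraints occurs at a vertex of the feasible polygon in $(\beta_2,\beta_3)$, and then evaluate the finitely many vertices.

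\textbf{Step 2: optimise over $u$.} Once Step 1 is in hand, it remains to maximise $h(u):=(1-u)/(1-\tfrac34u)^2$ on $(0,1)$. Setting $h'(u)=0$ gives $-(1-\tfrac34u)+\tfrac32(1-u)=0$, so $u=2/3$. The value there is $h(2/3)=\tfrac13/\tfrac14=4/3$, and this critical point is the unique maximum, since $h(0)=1$ and $h(1)=0$. Taking square roots yields $\mu_K^{\mathrm{circ}}/\mu_K^{\mathrm{sr}}\leq 2/\sqrt3$.

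\textbf{Step 3: sharpness and the equality case.} Equality forces equality in both steps: $\beta_2=\beta_3=\pi/3$, so $T$ is equilateral, and $u=\vartheta_K^2=2/3$. Conversely, such a triangle exists. Take the equilateral triangle inscribed in the circle $\Sph_R^2\cap\{x_3=R/\sqrt3\}$, as in Example~\ref{ex:coarse-equilateral} with $\delta=R/\sqrt3$. This gives $\vartheta_K=r_\delta/R=\sqrt{2/3}$, so the bound is attained and the constant $2/\sqrt3$ cannot be improved.
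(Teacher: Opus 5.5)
There is a genuine gap in Step~1. The claimed bound $(1-us_2^2)(1-us_3^2)\geq(1-\tfrac34u)^2$ is false for $u$ near $1$, so the equilateral shape is not the worst shape at every fixed $u$.

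Let $\beta_3\to0$ and $\beta_2\to\pi/2$. Then $\beta_1=\pi-\beta_2-\beta_3\to\pi/2$ remains the largest angle, so these cores are admissible, and by Theorem~\ref{thm:support-plane-localisation} they are realisable with any $u\in(0,1)$. The product tends to $1-u$, and $1-u<(1-\tfrac34u)^2$ exactly when $u>8/9$. Concretely, $u=0.95$, $\beta_2=1.5$, $\beta_3=0.1$ gives a product $\approx0.0542$, against $(1-\tfrac34u)^2\approx0.0827$.

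The monotonicity of the factors only moves you onto the active edges $2\beta_2+\beta_3=\pi$ or $\beta_2+2\beta_3=\pi$. These are the cores with $\beta_1=\beta_2$ or $\beta_1=\beta_3$; monotonicity does not take you to $\sigma=2\pi/3$. Along these edges the denominator is not monotone towards the equilateral point when $u$ is large. Your fallback of evaluating the vertices of the feasible polygon would in fact expose the vertex $(\pi/2,0)$. Hence Step~2 and the equality analysis in Step~3, which rest on Step~1, are unsupported as written. The theorem is still true only because at that corner the numerator degenerates in the same way and $F\to1$. Numerator and denominator cannot be optimised separately.

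The paper reverses the order: it fixes the shape and optimises in $u$. With $A=\sin^2\beta_2$ and $B=\sin^2\beta_3$, the sign of $\partial_uF$ is that of $N(u)=A+B-1-AB(2u-u^2)$.
\begin{itemize}
\item If $\beta_2+\beta_3\leq\pi/2$, then $A+B\leq1$, so $F$ is decreasing and $F<1$.
\item Otherwise $N$ has the unique zero $u_*=1-\cot\beta_2\cot\beta_3$, and $F(u_*)=1/\sin^2(\beta_2+\beta_3)\leq4/3$ because $\pi/2<\beta_2+\beta_3\leq2\pi/3$. Equality holds only for $\beta_2+\beta_3=2\pi/3$ (equilateral) and $u=2/3$.
\end{itemize}

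If you want to keep your order, replace Step~1 by the correct statement: for fixed $u$, the infimum of the denominator over admissible shapes is $\min\{(1-\tfrac34u)^2,\,1-u\}$. To prove it, reduce by monotonicity to the edge $\beta_1=\beta_2$ and write $x=\sin^2\beta_2\in[3/4,1)$ there. The denominator becomes the cubic $P(x)=(1-ux)(1-4ux(1-x))$, which has negative leading coefficient and $P'(3/4)=u(1-\tfrac34u)>0$. So $P'$ changes sign at most once on $[3/4,1]$, from positive to negative, and the minimum is at an endpoint.

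Then $F\leq\max\{h(u),1\}\leq4/3$. Your Step~2 computation and your existence example in Step~3 then finish the argument. The equality case also follows, since at $u=2/3$ the equilateral value $1/4$ lies strictly below $1-u=1/3$.
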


\begin{proof}
  We set
  \begin{align*}
    s:=\vartheta_K^2\in(0,1),
    \quad
    a:=\beta_2,
    \quad
    b:=\beta_3,
    \quad
    A:=\sin^2a,
    \quad
    B:=\sin^2b.
  \end{align*}
  From \eqref{eq:exact-comparison-angles},
  \begin{align}
    \left(
      \frac{\mu_K^{\mathrm{circ}}}{\mu_K^{\mathrm{sr}}}
    \right)^2
    =
    \frac{1-s}{(1-As)(1-Bs)}.
    \label{eq:global-comparison-F}
  \end{align}
  For fixed $a$ and $b$, we define
  \begin{align*}
    F(s)
    :=
    \frac{1-s}{(1-As)(1-Bs)},
    \quad
    0\leq s<1,
  \end{align*}
  where the value $F(0)=1$ is the continuous extension of the expression corresponding to the geometric range $s\in(0,1)$.

  Because $\beta_T=\beta_1$ is the largest interior angle of the planar triangle $T$, one has
  \begin{align*}
    a,b\in(0,\pi/2).
  \end{align*}
  Indeed, if $\beta_T<\pi/2$, then $a,b\leq\beta_T<\pi/2$, whereas if $\beta_T\geq\pi/2$, then
  \begin{align*}
    a+b
    =
    \pi-\beta_T
    \leq
    \frac{\pi}{2}.
  \end{align*}
  We set
  \begin{align*}
    \sigma
    :=
    a+b
    =
    \pi-\beta_T.
  \end{align*}
  Because $\beta_T\geq\pi/3$,
  \begin{align*}
    0<\sigma\leq\frac{2\pi}{3}.
  \end{align*}

  Because $0<A,B<1$ and $0<s<1$, the denominator in \eqref{eq:global-comparison-F} is positive. Differentiation gives
  \begin{align*}
    F'(s)
    =
    \frac{
      N(s)
    }{
      (1-As)^2(1-Bs)^2
    },
  \end{align*}
  where
  \begin{align}
    N(s)
    :=
    A+B-1-AB(2s-s^2).
    \label{eq:global-comparison-N}
  \end{align}
  Thus, the sign of $F'(s)$ is the sign of $N(s)$.

  Suppose that
  \begin{align*}
    0<\sigma\leq\frac{\pi}{2}.
  \end{align*}
  Using
  \begin{align*}
    \sin^2a+\sin^2b
    =
    1-\cos(a+b)\cos(a-b),
  \end{align*}
  we have
  \begin{align*}
    A+B
    =
    1-\cos\sigma\cos(a-b)
    \leq
    1,
  \end{align*}
  because $\cos\sigma\geq0$ and $\cos(a-b)>0$. Because $AB>0$ and
  \begin{align*}
    2s-s^2
    =
    s(2-s)>0,
    \quad
    0<s<1,
  \end{align*}
  it follows that
  \begin{align*}
    N(s)<0,
    \quad
    0<s<1.
  \end{align*}
  Therefore, $F$ is strictly decreasing on $(0,1)$, and therefore
  \begin{align}
    F(s)
    <
    F(0)
    =
    1.
    \label{eq:global-comparison-first-case}
  \end{align}
  Suppose that
  \begin{align*}
    \frac{\pi}{2}
    <
    \sigma
    \leq
    \frac{2\pi}{3}.
  \end{align*}
  Because $\cos\sigma<0$ and $\cos(a-b)>0$,
  \begin{align*}
    A+B
    =
    1-\cos\sigma\cos(a-b)
    >
    1.
  \end{align*}
  Furthermore,
  \begin{align*}
    N'(s)
    =
    -2AB(1-s)
    <
    0,
    \quad
    0<s<1,
  \end{align*}
  so $N$ is strictly decreasing. 
  
  We set
  \begin{align*}
    q:=\cot a\cot b.
  \end{align*}
  A direct calculation gives
  \begin{align}
    \frac{A+B-1}{AB}
    =
    1-\cot^2a\cot^2b
    =
    1-q^2.
    \label{eq:global-comparison-q}
  \end{align}
  Because $A+B>1$, the left-hand side is positive, and hence
  \begin{align*}
    0<q<1.
  \end{align*}
  We define
  \begin{align}
    s_*
    :=
    1-q
    =
    1-\cot a\cot b.
    \label{eq:global-comparison-sstar}
  \end{align}
  Then, $s_*\in(0,1)$ and
  \begin{align*}
    2s_*-s_*^2
    =
    1-q^2
    =
    \frac{A+B-1}{AB}.
  \end{align*}
  Therefore,
  \begin{align*}
    N(s_*)=0.
  \end{align*}
  Because $N$ is strictly decreasing, $s_*$ is its unique zero on $(0,1)$. Consequently, $F$ is strictly increasing on $(0,s_*)$ and strictly decreasing on $(s_*,1)$, and therefore attains its unique maximum at $s_*$. At this point,
  \begin{align*}
    1-s_*
    =
    \cot a\cot b.
  \end{align*}
  Furthermore,
  \begin{align*}
    1-s_*\sin^2a
    &=
    1-
    \left(
      1-\cot a\cot b
    \right)
    \sin^2a
    \\
    &=
    \cos^2a
    +
    \sin^2a\cot a\cot b
    \\
    &=
    \frac{
      \cos a\,\sin(a+b)
    }{
      \sin b
    }.
  \end{align*}
  Similarly,
  \begin{align*}
    1-s_*\sin^2b
    =
    \frac{
      \cos b\,\sin(a+b)
    }{
      \sin a
    }.
  \end{align*}
  Substitution into \eqref{eq:global-comparison-F} gives
  \begin{align*}
    F(s_*)
    &=
    \frac{
      \cot a\cot b
    }{
      \displaystyle
      \frac{\cos a\,\sin(a+b)}{\sin b}
      \frac{\cos b\,\sin(a+b)}{\sin a}
    }
    =
    \frac{1}{\sin^2(a+b)}
    =
    \frac{1}{\sin^2\sigma}.
  \end{align*}
  Because
  \begin{align*}
    \frac{\pi}{2}
    <
    \sigma
    \leq
    \frac{2\pi}{3},
  \end{align*}
  and $\sin$ is decreasing on this interval,
  \begin{align*}
    \sin\sigma
    \geq
    \sin\frac{2\pi}{3}
    =
    \frac{\sqrt3}{2}.
  \end{align*}
  Therefore,
  \begin{align}
    F(s)
    \leq
    F(s_*)
    =
    \frac{1}{\sin^2\sigma}
    \leq
    \frac43.
    \label{eq:global-comparison-second-case}
  \end{align}

  Combining \eqref{eq:global-comparison-first-case} and \eqref{eq:global-comparison-second-case} gives
  \begin{align*}
    F(s)\leq\frac43.
  \end{align*}
  Because all parameters are positive,
  \begin{align*}
    \frac{\mu_K^{\mathrm{circ}}}{\mu_K^{\mathrm{sr}}}
    \leq
    \frac{2}{\sqrt3}.
  \end{align*}

  It remains to characterise equality. The first case gives $F(s)<1$, so equality in the final estimate can occur only in the second case, and only if
  \begin{align*}
    s=s_*,
    \quad
    \frac{1}{\sin^2\sigma}
    =
    \frac43.
  \end{align*}
  Because $\sigma\in(\pi/2,2\pi/3]$, the latter condition is equivalent to
  \begin{align*}
    \sigma
    =
    \frac{2\pi}{3}.
  \end{align*}
  Therefore,
  \begin{align*}
    \beta_T
    =
    \pi-\sigma
    =
    \frac{\pi}{3}.
  \end{align*}
  Because $\beta_T$ is the largest chordal interior angle, all three interior angles of $T$ must equal $\pi/3$. Thus, $T$ is equilateral and
  \begin{align*}
    a=b=\frac{\pi}{3}.
  \end{align*}
  Consequently,
  \begin{align*}
    s_*
    =
    1-\cot^2\frac{\pi}{3}
    =
    \frac23.
  \end{align*}
  Because $s=\vartheta_K^2$, equality implies
  \begin{align*}
    \vartheta_K^2=\frac23.
  \end{align*}
Conversely, if $T$ is equilateral and $\vartheta_K^2=2/3$, then
  \begin{align*}
    a=b=\frac{\pi}{3},
    \quad
    A=B=\frac34,
    \quad
    s=\frac23.
  \end{align*}
  Therefore,
  \begin{align*}
    F\left(\frac23\right)
    &=
    \frac{
      1-\frac23
    }{
      \left(
        1-\frac34\frac23
      \right)^2
    }
    =
    \frac{
      \frac13
    }{
      \left(\frac12\right)^2
    }
    =
    \frac43.
  \end{align*}
  Thus,
  \begin{align*}
    \frac{\mu_K^{\mathrm{circ}}}{\mu_K^{\mathrm{sr}}}
    =
    \frac{2}{\sqrt3},
  \end{align*}
  and equality is attained.
\end{proof}

\begin{corollary}[Spherical semi-regularity implies chordal semi-regularity]
  \label{cor:spherical-implies-chordal}
  Let $\mathscr K$ be any family of non-degenerate exact spherical triangles. If
  \begin{align*}
    \mu_K^{\mathrm{sr}}
    \leq
    \gamma_{\mathrm{sr}}
    \quad
    \forall K\in\mathscr K,
  \end{align*}
  then the associated chordal affine cores satisfy
  \begin{align*}
    \mu_T
    =
    \mu_K^{\mathrm{circ}}
    \leq
    \frac{2}{\sqrt3}\,
    \gamma_{\mathrm{sr}}
    \quad
    \forall K\in\mathscr K.
  \end{align*}
  In particular, the intrinsic spherical maximum-angle condition implies the chordal maximum-angle condition without any spherical-locality assumption.
\end{corollary}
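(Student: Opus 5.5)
This corollary follows directly from the sharp global comparison in Theorem~\ref{thm:global-semiregularity-comparison}, combined with the bridge identity of Theorem~\ref{thm:intrinsic-circumradius-representation}. Fix an arbitrary $K\in\mathscr K$ with chordal affine core $T=T_K$.

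By \eqref{eq:intrinsic-chordal-bridge}, $\mu_K^{\mathrm{circ}}=\mu_T$. This holds with no locality assumption, since it rests only on the support-plane identity $d_T^2+\mathcal R_T^2=R^2$ and the chord formula \eqref{eq:intrinsic-largest-side-chord}.

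Next we invoke \eqref{eq:global-semiregularity-comparison}:
\begin{align*}
  \mu_T=\mu_K^{\mathrm{circ}}\leq\frac{2}{\sqrt3}\,\mu_K^{\mathrm{sr}}.
\end{align*}
The hypothesis $\mu_K^{\mathrm{sr}}\leq\gamma_{\mathrm{sr}}$ then gives $\mu_T\leq (2/\sqrt3)\gamma_{\mathrm{sr}}$. This constant is independent of $K$, so the family is chordally semi-regular in the sense of Definition~\ref{def:chordal-semiregularity} with $\gamma_0=(2/\sqrt3)\gamma_{\mathrm{sr}}$.

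For the final sentence of the corollary, we translate both sides into angle conditions. By Corollary~\ref{cor:spherical-sr-maximum-angle-equivalence}, the spherical maximum-angle condition $\alpha_K\leq\pi-\delta$ is equivalent to a uniform bound on $\mu_K^{\mathrm{sr}}$, with $\gamma_{\mathrm{sr}}=2/\min\{\sqrt3/2,\sin\delta\}$. By \eqref{eq:mu-circumradius-identities}, $\mu_T=2/\sin\beta_T$. Since $\beta_T\geq\pi/3$, the bound $\sin\beta_T\geq\sqrt3/\gamma_{\mathrm{sr}}$ yields $\beta_T\leq\pi-\arcsin(\sqrt3/\gamma_{\mathrm{sr}})$. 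Here $\arcsin$ is well defined because $\gamma_{\mathrm{sr}}\geq 2>\sqrt3$.

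There is no genuine obstacle in this argument; all the analytic work was done in Theorem~\ref{thm:global-semiregularity-comparison}. The only point needing care is to confirm that no step uses spherical locality. Neither Theorem~\ref{thm:intrinsic-circumradius-representation} nor Theorem~\ref{thm:global-semiregularity-comparison} does, since the latter maximises over all $s=\vartheta_K^2\in(0,1)$.
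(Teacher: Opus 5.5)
Your proposal is correct and follows the paper's own argument: combine the bridge identity $\mu_K^{\mathrm{circ}}=\mu_T$ with the locality-free bound $\mu_K^{\mathrm{circ}}\leq\frac{2}{\sqrt3}\mu_K^{\mathrm{sr}}$ from Theorem~\ref{thm:global-semiregularity-comparison}. Your additional translation of the final sentence into explicit angle bounds, via $\mu_T=2/\sin\beta_T$ and $\gamma_{\mathrm{sr}}\geq 2$, is accurate and harmless.
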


\begin{proof}
  This follows immediately from Theorem~\ref{thm:global-semiregularity-comparison} and $\mu_K^{\mathrm{circ}}=\mu_T$.
\end{proof}

\begin{example}[Failure of the reverse implication without spherical locality]
  \label{ex:coarse-equilateral-spherical}
  We consider the coarse equilateral family of Example~\ref{ex:coarse-equilateral} and set
  \begin{align*}
    t:=\frac{d_T}{R}\in(0,1).
  \end{align*}
  The three geodesic sides are equal, and their common central angle $\theta_K$ satisfies
  \begin{align*}
    \cos\theta_K
    =
    \frac{3t^2-1}{2}.
  \end{align*}
  By the spherical law of cosines, the three spherical interior angles are also equal, and their common value $\alpha_K$ satisfies
  \begin{align*}
    \cos\alpha_K
    =
    \frac{3t^2-1}{1+3t^2},
    \quad
    \sin\alpha_K
    =
    \frac{2\sqrt3\,t}{1+3t^2}.
  \end{align*}
  Therefore,
  \begin{align*}
    \alpha_K
    \longrightarrow
    \pi
    \quad
    (t\downarrow0),
  \end{align*}
  and Theorem~\ref{thm:spherical-sr-maximum-angle-identity} gives
  \begin{align*}
    \mu_K^{\mathrm{sr}}
    =
    \frac{2}{\sin\alpha_K}
    =
    \frac{1+3t^2}{\sqrt3\,t}
    \longrightarrow
    \infty.
  \end{align*}
  On the other hand,
  \begin{align*}
    \mu_K^{\mathrm{circ}}
    =
    \mu_T
    =
    \frac{4}{\sqrt3}.
  \end{align*}
  Furthermore,
  \begin{align*}
    \vartheta_K
    =
    \sqrt{1-t^2}
    \longrightarrow1,
    \quad
    \chi_K
    =
    \frac1t
    \longrightarrow\infty.
  \end{align*}
  Therefore,
  \begin{align*}
    \frac{\mu_K^{\mathrm{sr}}}{\mu_K^{\mathrm{circ}}}
    \longrightarrow\infty.
  \end{align*}
  Thus, uniform chordal semi-regularity does not imply intrinsic spherical semi-regularity without spherical locality. In particular, no uniform reverse estimate
  \begin{align*}
    \mu_K^{\mathrm{sr}}
    \leq
    C\mu_K^{\mathrm{circ}}
  \end{align*}
  can hold over arbitrary exact spherical triangles.
\end{example}

\begin{theorem}[Sharp unit-prefactor power comparison]
  \label{thm:sharp-semiregularity-comparison}
  For any non-degenerate exact spherical triangle,
  \begin{align}
    \chi_K^{-1}\mu_K^{\mathrm{sr}}
    <
    \mu_K^{\mathrm{circ}}
    <
    \chi_K^{1/2}\mu_K^{\mathrm{sr}}.
    \label{eq:sharp-semiregularity-comparison}
  \end{align}
  Among upper bounds with unit prefactor of the form
  \begin{align*}
    \frac{\mu_K^{\mathrm{circ}}}{\mu_K^{\mathrm{sr}}}
    \leq
    \chi_K^\tau,
  \end{align*}
  the exponent $1/2$ is optimal: no exponent $\tau<1/2$ works uniformly over all exact spherical triangles. The lower bound is asymptotically sharp: along families with $a_T,b_T\to0$,
  \begin{align*}
    \frac{
      \mu_K^{\mathrm{circ}}/\mu_K^{\mathrm{sr}}
    }{
      \chi_K^{-1}
    }
    \longrightarrow1.
  \end{align*}
\end{theorem}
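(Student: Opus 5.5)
The plan is to work entirely from the exact comparison in Theorem~\ref{thm:exact-comparison}, in both of its forms. Write $s:=\vartheta_K^2\in(0,1)$, $A:=\sin^2\beta_2$, $B:=\sin^2\beta_3$, and recall $\chi_K^{-2}=1-s$. The ratio $\mu_K^{\mathrm{circ}}/\mu_K^{\mathrm{sr}}$ equals $1/(\chi_K\sqrt{(1-a_T^2)(1-b_T^2)})$, and its square equals $(1-s)/((1-As)(1-Bs))$.

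\emph{Lower bound.} Since $0<b_T\le a_T<1$, we have $(1-a_T^2)(1-b_T^2)<1$. Hence the ratio is strictly larger than $\chi_K^{-1}$, which gives the left inequality in \eqref{eq:sharp-semiregularity-comparison}.

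\emph{Upper bound.} Squaring, the right inequality is equivalent to
\begin{align*}
  (1-As)(1-Bs)>(1-s)^{3/2}.
\end{align*}
As in the proof of Theorem~\ref{thm:global-semiregularity-comparison}, $\beta_2,\beta_3\in(0,\pi/2)$ and $\sigma=\beta_2+\beta_3\le 2\pi/3$. Therefore $0<A,B<1$ and
\begin{align*}
  A+B=1-\cos\sigma\cos(\beta_2-\beta_3)\le 1-\cos\sigma\le\tfrac32 .
\end{align*}
Fix $S:=A+B$. The map $x\mapsto\log(1-xs)$ is concave, so $\log(1-As)+\log(1-Bs)$ is minimised over the segment $\{A+B=S,\ 0\le A,B\le1\}$ at an endpoint.
\begin{itemize}
  \item If $S\le1$, the endpoint is $(S,0)$, and the product is at least $1-Ss\ge1-s>(1-s)^{3/2}$.
  \item If $S>1$, the endpoint is $(1,S-1)$ with $S-1\le\tfrac12$, and the product is at least $(1-s)(1-s/2)$.
\end{itemize}
In the second case, $1-s/2>\sqrt{1-s}$ because $(1-s/2)^2=1-s+s^2/4$. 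This gives the strict bound; strictness is also consistent with $A,B<1$ holding strictly. I expect this step to be the main obstacle: the cruder locality bound of Lemma~\ref{lem:side-localisation} only yields $(1-s)^2$, that is, exponent $1$. The constraint $A+B\le3/2$ is exactly what produces the exponent $1/2$.

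\emph{Optimality of $1/2$.} Take the equilateral chordal cores with $s\downarrow0$, so $A=B=3/4$. Then
\begin{align*}
  \Bigl(\frac{\mu_K^{\mathrm{circ}}}{\mu_K^{\mathrm{sr}}}\Bigr)^2=\frac{1-s}{(1-3s/4)^2}=1+\tfrac{s}{2}+O(s^2),
  \quad
  \chi_K^{2\tau}=(1-s)^{-\tau}=1+\tau s+O(s^2).
\end{align*}
So for any $\tau<1/2$ the claimed bound fails when $s$ is small. This is consistent with the earlier observation that equilateral cores maximise $A+B-1$.

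\emph{Asymptotic sharpness of the lower bound.} Multiply the first form of the ratio by $\chi_K$. This gives exactly $((1-a_T^2)(1-b_T^2))^{-1/2}$, which tends to $1$ whenever $a_T,b_T\to0$.
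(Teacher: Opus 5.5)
Your argument is correct and largely follows the paper's proof. The lower bound and its asymptotic sharpness are read off \eqref{eq:exact-comparison} exactly as the paper does. The optimality of $1/2$ uses the same equilateral family with $\vartheta_K\downarrow0$; your expansion $1+s/2$ versus $1+\tau s$ is the same computation as the paper's limit of $\log(\mu_K^{\mathrm{circ}}/\mu_K^{\mathrm{sr}})/\log\chi_K$.

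For the upper bound you use the same two ingredients as the paper: concavity of $\xi\mapsto\log(1-s\xi)$ and $\sin^2\beta_2+\sin^2\beta_3\le 3/2$. You combine them differently, though.
\begin{itemize}
\item The paper applies the chord inequality $1-s\xi\ge(1-s)^{\xi}$ to each factor separately. This gives $(1-As)(1-Bs)\ge(1-s)^{A+B}$, so the effective exponent is at most $-1+A+B$. That form makes the extremality of the equilateral core transparent.
\item You minimise over the level set $A+B=S$ and evaluate at an endpoint. This yields a slightly stronger intermediate estimate, $(\mu_K^{\mathrm{circ}}/\mu_K^{\mathrm{sr}})^2\le(1-s/2)^{-1}=2\chi_K^2/(1+\chi_K^2)$. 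This is at most $\chi_K$ because $(\chi_K-1)^2\ge0$, and it is also uniformly below $2$.
\end{itemize}

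One supporting step is wrong as written. The chain $A+B=1-\cos\sigma\cos(\beta_2-\beta_3)\le1-\cos\sigma$ requires $\cos\sigma\le0$. When $\sigma<\pi/2$ and $\beta_2\ne\beta_3$ the inequality reverses. For example, $\beta_2=0.1$, $\beta_3=0.5$ gives $A+B\approx0.240$, whereas $1-\cos\sigma\approx0.175$.

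The conclusion $A+B\le3/2$ is still true, but you need the case split the paper makes:
\begin{itemize}
\item If $\sigma<\pi/2$, both $\cos\sigma$ and $\cos(\beta_2-\beta_3)$ are positive, since $\beta_2,\beta_3\in(0,\pi/2)$. Hence $A+B<1$.
\item If $\sigma\ge\pi/2$, your chain is valid and gives $A+B\le1-\cos(2\pi/3)=3/2$.
\end{itemize}
With this repair the proof is complete.
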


\begin{proof}
  Because $\mu_K^{\mathrm{circ}}=\mu_T$ and $\chi_K=\chi_T$, we may use \eqref{eq:exact-comparison}. Since $0<a_T,b_T<1$,
  \begin{align*}
    0<(1-a_T^2)(1-b_T^2)<1.
  \end{align*}
  Therefore,
  \begin{align*}
    \frac{\mu_K^{\mathrm{circ}}}{\mu_K^{\mathrm{sr}}}
    >
    \chi_K^{-1}.
  \end{align*}
  Furthermore, if $a_T,b_T\to0$, then
  \begin{align*}
    \frac{
      \mu_K^{\mathrm{circ}}/\mu_K^{\mathrm{sr}}
    }{
      \chi_K^{-1}
    }
    =
    \frac{
      1
    }{
      \sqrt{(1-a_T^2)(1-b_T^2)}
    }
    \longrightarrow1,
  \end{align*}
  proving the asserted asymptotic sharpness of the lower bound.

  For the upper bound, we put
  \begin{align*}
    s:=\vartheta_K^2\in(0,1),
    \quad
    x:=\sin^2\beta_2,
    \quad
    y:=\sin^2\beta_3,
  \end{align*}
  and define
  \begin{align*}
    \varphi(\xi)
    :=
    \frac{
      \log(1-s\xi)
    }{
      \log(1-s)
    },
    \quad
    \xi\in[0,1].
  \end{align*}
  Then,
  \begin{align*}
    \varphi(0)=0,
    \quad
    \varphi(1)=1.
  \end{align*}
  Because $\xi\mapsto\log(1-s\xi)$ is strictly concave and $\log(1-s)<0$, the function $\varphi$ is strictly convex. Therefore,
  \begin{align*}
    \varphi(\xi)\leq\xi,
    \quad
    0\leq\xi\leq1,
  \end{align*}
  with strict inequality for $\xi\in(0,1)$. Taking logarithms in \eqref{eq:exact-comparison-angles} and dividing by
  \begin{align*}
    \log\chi_K
    =
    -\frac12\log(1-s)
    >
    0
  \end{align*}
  gives
  \begin{align}
    \frac{
      \log(
        \mu_K^{\mathrm{circ}}/\mu_K^{\mathrm{sr}}
      )
    }{
      \log\chi_K
    }
    =
    -1+\varphi(x)+\varphi(y)
    \leq
    -1+\sin^2\beta_2+\sin^2\beta_3.
    \label{eq:exponent-bound}
  \end{align}
  We write
  \begin{align*}
    \sigma
    :=
    \beta_2+\beta_3
    =
    \pi-\beta_T,
    \quad
    \delta
    :=
    |\beta_2-\beta_3|.
  \end{align*}
  Then,
  \begin{align*}
    \sin^2\beta_2+\sin^2\beta_3
    =
    1-\cos\sigma\cos\delta.
  \end{align*}
  Because $\beta_T$ is the largest interior angle, $\beta_T\geq\pi/3$, and therefore
  \begin{align*}
    \sigma\leq\frac{2\pi}{3}.
  \end{align*}

  Suppose first that
  \begin{align*}
    \sigma\geq\frac{\pi}{2}.
  \end{align*}
  Because $\beta_2,\beta_3\in(0,\pi/2)$, one has $0\leq\delta<\pi/2$, and hence
  \begin{align*}
    0<\cos\delta\leq1.
  \end{align*}
  Since $\cos\sigma\leq0$,
  \begin{align*}
    \cos\sigma\cos\delta
    \geq
    \cos\sigma.
  \end{align*}
  Consequently,
  \begin{align*}
    1-\cos\sigma\cos\delta
    \leq
    1-\cos\sigma
    \leq
    1-\cos\frac{2\pi}{3}
    =
    \frac32.
  \end{align*}

  Suppose next that
  \begin{align*}
    \sigma<\frac{\pi}{2}.
  \end{align*}
  Because $\beta_2,\beta_3>0$,
  \begin{align*}
    \delta
    =
    |\beta_2-\beta_3|
    <
    \beta_2+\beta_3
    =
    \sigma.
  \end{align*}
  Therefore,
  \begin{align*}
    \cos\delta
    >
    \cos\sigma
    >
    0,
  \end{align*}
  and hence
  \begin{align*}
    1-\cos\sigma\cos\delta
    <
    1-\cos^2\sigma
    =
    \sin^2\sigma
    <
    1.
  \end{align*}
  
  Thus, in all cases,
  \begin{align*}
    \sin^2\beta_2+\sin^2\beta_3
    \leq
    \frac32.
  \end{align*}
  Equation \eqref{eq:exponent-bound} therefore gives
  \begin{align*}
    \frac{
      \log(
        \mu_K^{\mathrm{circ}}/\mu_K^{\mathrm{sr}}
      )
    }{
      \log\chi_K
    }
    \leq
    \frac12,
  \end{align*}
  and hence
  \begin{align*}
    \mu_K^{\mathrm{circ}}
    \leq
    \chi_K^{1/2}\mu_K^{\mathrm{sr}}.
  \end{align*}

  Because $\beta_2,\beta_3\in(0,\pi/2)$, one has $x,y\in(0,1)$. Thus,
  $\varphi(x)<x$ and $\varphi(y)<y$, so the preceding upper bound is in
  fact strict:
  \begin{align*}
    \mu_K^{\mathrm{circ}}
    <
    \chi_K^{1/2}\mu_K^{\mathrm{sr}}.
  \end{align*}
  To prove optimality within the unit-prefactor power class, we consider the equilateral family
  \begin{align*}
    \beta_1
    =
    \beta_2
    =
    \beta_3
    =
    \frac{\pi}{3}.
  \end{align*}
  Then, $x=y=3/4$, and
  \begin{align*}
    \frac{
      \log(
        \mu_K^{\mathrm{circ}}/\mu_K^{\mathrm{sr}}
      )
    }{
      \log\chi_K
    }
    &=
    -1
    +
    2
    \frac{
      \log(
        1-\tfrac34\vartheta_K^2
      )
    }{
      \log(
        1-\vartheta_K^2
      )
    }
    \longrightarrow
    \frac12
    \quad
    (\vartheta_K\downarrow0).
  \end{align*}
  Therefore, for any $\tau<1/2$, the unit-prefactor inequality
  \begin{align*}
    \frac{\mu_K^{\mathrm{circ}}}{\mu_K^{\mathrm{sr}}}
    \leq
    \chi_K^\tau
  \end{align*}
  fails along this family for all sufficiently small positive $\vartheta_K$.
\end{proof}

\begin{corollary}[Equivalence of the chordal bridge under spherical locality]
  \label{cor:equivalence}
  Let $\mathscr K$ be a uniformly spherically local family of exact spherical triangles with locality constant $\eta_0\in(0,1)$, and set
  \begin{align*}
    \chi_0
    :=
    (1-\eta_0^2)^{-1/2}.
  \end{align*}
  Then, the auxiliary chordal-bridge condition
  \begin{align}
    \mu_K^{\mathrm{circ}}
    =
    \mu_T
    \leq
    \gamma_0
    \quad
    \forall K\in\mathscr K
    \label{eq:chordal-semiregularity-bound}
  \end{align}
  for some $\gamma_0<\infty$ is equivalent to uniform intrinsic spherical semi-regularity and hence to the spherical maximum-angle condition \eqref{eq:spherical-maximum-angle-condition}. More precisely, the chordal bound with constant $\gamma_0$ implies the spherical maximum-angle condition with
  \begin{align*}
    \delta
    :=
    \arcsin\left(
      \frac{2}{\chi_0\gamma_0}
    \right),
  \end{align*}
  whereas the spherical maximum-angle condition with constant $\delta$ implies, without using spherical locality,
  \begin{align*}
    \mu_T
    \leq
    \frac{4}{\sqrt3\,c_\delta},
    \quad
    c_\delta
    :=
    \min\left\{
      \frac{\sqrt3}{2},
      \sin\delta
    \right\}.
  \end{align*}
\end{corollary}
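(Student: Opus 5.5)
The plan is to prove the two directions separately, using only the two-sided comparison in Theorem~\ref{thm:sharp-semiregularity-comparison}, the sharp global bound of Theorem~\ref{thm:global-semiregularity-comparison}, and the equivalences in Corollary~\ref{cor:spherical-sr-maximum-angle-equivalence}.

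\emph{Chordal bound implies spherical maximum-angle condition.} Assume $\mu_T\le\gamma_0$ for all $K\in\mathscr K$. The lower half of \eqref{eq:sharp-semiregularity-comparison} gives $\chi_K^{-1}\mu_K^{\mathrm{sr}}<\mu_K^{\mathrm{circ}}=\mu_T$. By Definition~\ref{def:spherical-locality}, $\chi_K\le\chi_0$, so
\begin{align*}
  \mu_K^{\mathrm{sr}}<\chi_0\gamma_0 .
\end{align*}
This is (ii) of Corollary~\ref{cor:spherical-sr-maximum-angle-equivalence} with $\gamma_{\mathrm{sr}}=\chi_0\gamma_0$. It remains to check that $\delta=\arcsin(2/(\chi_0\gamma_0))$ is well defined. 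Since $\mu_T\ge2$ we have $\gamma_0\ge2$, and $\chi_0>1$, so the argument $2/(\chi_0\gamma_0)$ lies in $(0,1)$. Theorem~\ref{thm:spherical-sr-maximum-angle-identity} gives $\sin\alpha_K=2/\mu_K^{\mathrm{sr}}>2/(\chi_0\gamma_0)=\sin\delta$. Since $0<\alpha_K<\pi$, this forces $\alpha_K\in(\delta,\pi-\delta)$, and in particular $\alpha_K\le\pi-\delta$. The argument is the same as in the proof of Corollary~\ref{cor:spherical-sr-maximum-angle-equivalence}.

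\emph{Spherical maximum-angle condition implies chordal bound.} Assume $\alpha_K\le\pi-\delta$. The (iii)$\Rightarrow$(i) part of Corollary~\ref{cor:spherical-sr-maximum-angle-equivalence} uses $\pi/3<\alpha_K$ from \eqref{eq:largest-spherical-angle-range}. It yields $\sin\alpha_K\ge c_\delta$, hence $\mu_K^{\mathrm{sr}}\le 2/c_\delta$. Theorem~\ref{thm:global-semiregularity-comparison}, which needs no locality assumption, then gives
\begin{align*}
  \mu_T=\mu_K^{\mathrm{circ}}\le\frac{2}{\sqrt3}\,\frac{2}{c_\delta}=\frac{4}{\sqrt3\,c_\delta}.
\end{align*}

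Finally, uniform intrinsic spherical semi-regularity is equivalent to the maximum-angle condition by Corollary~\ref{cor:spherical-sr-maximum-angle-equivalence}, which closes the chain of equivalences. I expect no real obstacle. The only points needing care are checking that the $\arcsin$ argument lies in $(0,1]$, and recording that spherical locality is used only in the first direction.
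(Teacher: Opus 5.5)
Your proof is correct and follows essentially the same route as the paper. The only difference is that the paper obtains $\chi_0^{-1}\mu_K^{\mathrm{sr}}\le\mu_T$ from Theorem~\ref{thm:angle-comparison} at $i=1$ rather than from the strict lower half of Theorem~\ref{thm:sharp-semiregularity-comparison}; this is the same inequality, and your converse via the $2/\sqrt3$ bound of Theorem~\ref{thm:global-semiregularity-comparison} (equivalently Corollary~\ref{cor:spherical-implies-chordal}) is identical to the paper's.
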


\begin{proof}
  Uniform spherical locality gives
  \begin{align*}
    \chi_K
    \leq
    \chi_0
    \quad
    \forall K\in\mathscr K.
  \end{align*}
  Theorem~\ref{thm:angle-comparison}, together with $\alpha_1=\alpha_K$ and $\beta_1=\beta_T$, gives
  \begin{align}
    \chi_0^{-1}\mu_K^{\mathrm{sr}}
    \leq
    \mu_T
    \leq
    \chi_0\mu_K^{\mathrm{sr}}
    \quad
    \forall K\in\mathscr K.
    \label{eq:uniform-chordal-spherical-parameter-comparison}
  \end{align}
  This proves the qualitative equivalence of uniform chordal and spherical semi-regularity.

  Suppose first that \eqref{eq:chordal-semiregularity-bound} holds. Because $\mu_T\geq2$, necessarily $\gamma_0\geq2$. The left-hand inequality in \eqref{eq:uniform-chordal-spherical-parameter-comparison} yields
  \begin{align*}
    \mu_K^{\mathrm{sr}}
    \leq
    \chi_0\gamma_0.
  \end{align*}
  Therefore,
  \begin{align*}
    \sin\alpha_K
    =
    \frac{2}{\mu_K^{\mathrm{sr}}}
    \geq
    \frac{2}{\chi_0\gamma_0}.
  \end{align*}
  Because $0<\alpha_K<\pi$,
  \begin{align*}
    \alpha_K
    \leq
    \pi-
    \arcsin\left(
      \frac{2}{\chi_0\gamma_0}
    \right).
  \end{align*}
  Thus, the spherical maximum-angle condition holds with the stated constant $\delta$. Conversely, suppose
  \begin{align*}
    \alpha_K
    \leq
    \pi-\delta
    \quad
    \forall K\in\mathscr K.
  \end{align*}
  From \eqref{eq:largest-spherical-angle-range},
  \begin{align*}
    \frac{\pi}{3}
    <
    \alpha_K
    \leq
    \pi-\delta.
  \end{align*}
  Therefore,
  \begin{align*}
    \sin\alpha_K
    \geq
    c_\delta
    :=
    \min\left\{
      \frac{\sqrt3}{2},
      \sin\delta
    \right\}
    >0.
  \end{align*}
  Thus,
  \begin{align*}
    \mu_K^{\mathrm{sr}}
    =
    \frac{2}{\sin\alpha_K}
    \leq
    \frac{2}{c_\delta}.
  \end{align*}
  Corollary~\ref{cor:spherical-implies-chordal} then gives
  \begin{align*}
    \mu_T
    \leq
    \frac{2}{\sqrt3}\mu_K^{\mathrm{sr}}
    \leq
    \frac{4}{\sqrt3\,c_\delta}.
  \end{align*}
  This converse implication does not require spherical locality.
\end{proof}

\subsection{The area-based intrinsic parameter}
\label{subsec:area-parameter}
Under Convention~\ref{conv:chordal-labelling}, the two geodesic edges adjacent to the largest spherical angle $\alpha_K$ have lengths $R\theta_{12}$ and $R\theta_{13}$. This motivates the following intrinsic dimensionless quantity.

\begin{definition}[Area-based spherical parameter]
  \label{def:spherical-area-parameter}
  We define
  \begin{align}
    \mu_K^{\mathrm{area}}
    :=
    \frac{
      R^2\theta_{12}\theta_{13}
    }{
      |K|_2
    }.
    \label{eq:spherical-area-parameter}
  \end{align}
\end{definition}

If more than one longest side exists, the value of $\mu_K^{\mathrm{area}}$ is independent of the admissible choice of the opposite vertex $p_1$. Indeed, if two longest sides have equal length, the products of the two geodesic side lengths adjacent to the corresponding opposite vertices coincide; the equilateral case is immediate.

\begin{lemma}[Elementary area bridge]
  \label{lem:spherical-area-parameter}
  For every non-degenerate exact spherical triangle,
  \begin{align}
    \chi_K^{-2}\mu_K^{\mathrm{circ}}
    <
    \mu_K^{\mathrm{area}}.
    \label{eq:area-elementary-lower-bridge}
  \end{align}
\end{lemma}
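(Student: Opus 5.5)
We need to show $\chi_K^{-2}\mu_T < R^2\theta_{12}\theta_{13}/|K|_2$.

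The plan is to compare both sides through the chordal triangle: bound the spherical area from above by the chordal area, and the geodesic lengths from below by the chordal lengths. For the area we use Lemma~\ref{lem:metric-distortion}, which gives $J_T\le\chi_T^2$. Integrating this over $T$ yields
\begin{align*}
  |K|_2=\int_T J_T\le\chi_K^2|T|_2 .
\end{align*}
For the lengths we use $R\theta_{1m}\ge 2R\sin(\theta_{1m}/2)=|p_1-p_m|$, which is strict since $\theta>2\sin(\theta/2)$ for $\theta>0$. This gives
\begin{align*}
  R^2\theta_{12}\theta_{13}>h_{T,1}h_{T,2}.
\end{align*}
Combining the two estimates,
\begin{align*}
  \mu_K^{\mathrm{area}}>\frac{h_{T,1}h_{T,2}}{\chi_K^2|T|_2}=\chi_K^{-2}\mu_T=\chi_K^{-2}\mu_K^{\mathrm{circ}},
\end{align*}
where the last two equalities are the definition \eqref{eq:chordal-geometric-parameters} and the bridge identity \eqref{eq:intrinsic-chordal-bridge}.

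The one step needing care is the area-formula justification: $\Psi_T$ is a smooth injective map of $T$ onto $K$ (Lemma~\ref{lem:radial-basic}), so $\mathcal H^2(K)=\int_T J_T\,\diff\mathcal H^2$. Strictness does not depend on this step, because it already comes from the length inequality.
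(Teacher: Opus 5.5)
Your proposal is correct and follows essentially the same argument as the paper. The paper likewise combines the strict chord--arc inequality $|p_1-p_m|=2R\sin(\theta_{1m}/2)<R\theta_{1m}$ with the integrated Jacobian bound $|K|_2\le\chi_T^2|T|_2$ from Lemma~\ref{lem:metric-distortion}, and then uses the bridge identity $\mu_T=\mu_K^{\mathrm{circ}}$. Your remark on justifying the area formula via the injectivity of $\Psi_T$ is a harmless addition that the paper leaves implicit.
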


\begin{proof}
  For $m=2,3$, we set
  \begin{align*}
    x:=\frac{\theta_{1m}}{2}\in\left(0,\frac{\pi}{2}\right).
  \end{align*}
  Because $\sin x<x$ for $x>0$,
  \begin{align*}
    |p_1-p_m|
    =
    2R\sin x
    <
    R\theta_{1m}.
  \end{align*}
  Therefore, the product of the two geodesic edge lengths adjacent to $p_1$ is strictly larger than $h_{T,1}h_{T,2}$. On the other hand, \eqref{eq:radial-jacobian-bounds} gives
  \begin{align*}
    |K|_2
    \leq
    \chi_T^2|T|_2.
  \end{align*}
  Therefore,
  \begin{align*}
    \mu_K^{\mathrm{area}}
    &>
    \frac{
      h_{T,1}h_{T,2}
    }{
      \chi_T^2|T|_2
    }
    =
    \chi_T^{-2}\mu_T
    =
    \chi_K^{-2}\mu_K^{\mathrm{circ}}.
  \end{align*}
\end{proof}

\begin{proposition}[Area--angle comparison]
  \label{prop:area-below-angle}
  For any non-degenerate exact spherical triangle,
  \begin{align}
    \mu_K^{\mathrm{area}}
    <
    \mu_K^{\mathrm{sr}},
    \label{eq:area-below-angle}
  \end{align}
  equivalently,
  \begin{align}
    |K|_2
    >
    \frac12
    R^2\theta_{12}\theta_{13}\sin\alpha_K.
    \label{eq:spherical-area-angle-lower}
  \end{align}
  The constant one in \eqref{eq:area-below-angle} is sharp: if $\theta_{12},\theta_{13}\to0$, then
  \begin{align}
    \frac{
      \mu_K^{\mathrm{area}}
    }{
      \mu_K^{\mathrm{sr}}
    }
    \longrightarrow1.
    \label{eq:area-angle-flat-limit}
  \end{align}
\end{proposition}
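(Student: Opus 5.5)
The plan is to work entirely with the intrinsic spherical triangle at the canonical vertex $p_1$, where $\alpha_1=\alpha_K$ by \eqref{eq:canonical-largest-angles}, and to reduce \eqref{eq:spherical-area-angle-lower} to a one-variable trigonometric inequality. First, by Theorem~\ref{thm:spherical-sr-maximum-angle-identity} we have $\mu_K^{\mathrm{sr}}=2/\sin\alpha_K$. Hence $\mu_K^{\mathrm{area}}<\mu_K^{\mathrm{sr}}$ is literally equivalent to $|K|_2>\tfrac12R^2\theta_{12}\theta_{13}\sin\alpha_K$, which settles the stated equivalence. Next, write $a:=\theta_{12}$, $b:=\theta_{13}$, $C:=\alpha_1$, and let $E:=|K|_2/R^2$ be the spherical excess. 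I will use the classical two-sides-and-included-angle excess formula
\begin{align*}
  \tan\frac E2
  =
  \frac{u\sin C}{1+u\cos C},
  \quad
  u:=\tan\frac a2\tan\frac b2.
\end{align*}
This formula can be derived from the vertex representation, for instance from Lemma~\ref{lem:exact-angle-relation} together with $\det(n_1,n_2,n_3)=\sin a\sin b\sin C$. It says that $E/2=\arg(1+u e^{iC})$, where $E\in(0,2\pi)$ and $1+ue^{iC}$ lies in the upper half-plane because $C\in(0,\pi)$.

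The target then becomes $\arg(1+ue^{iC})>u_0\sin C$ with $u_0:=ab/4$. The left side is strictly increasing in $u$ for $C\in(0,\pi)$, since its $u$-derivative is $\sin C/|1+ue^{iC}|^2>0$. Because $\tan x>x$ on $(0,\pi/2)$, we have $u>u_0$, so it suffices to prove $\arg(1+u_0e^{iC})\ge u_0\sin C$. Set $g(v):=\arg(1+ve^{iC})-v\sin C$. Then $g(0)=0$ and
\begin{align*}
  g'(v)
  =
  \sin C\left(\frac{1}{1+2v\cos C+v^2}-1\right),
\end{align*}
which is nonnegative exactly when $v\le-2\cos C$. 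So the inequality is immediate when $C\ge\pi/2$ and $u_0\le-2\cos C$.

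I expect the main obstacle to be the remaining range, because $g$ eventually becomes negative: $\arg$ is bounded by $C$, while $u_0\sin C$ grows. There one must use the global constraints $a,b<\pi$, which give $u_0<\pi^2/4$; $C>\pi/3$ from \eqref{eq:largest-spherical-angle-range}; $a,b$ adjacent to the largest angle with the opposite side $\theta_{23}$ longest; and geodesic convexity of $K$, which gives $E<2\pi$ together with the finer relation between $a$, $b$, $C$ and $\theta_{23}<\pi$. The slack between $u$ and $u_0$ must then compensate. Rather than dropping to $u_0$, I would keep $u=\tan(a/2)\tan(b/2)$ and prove $\arg(1+ue^{iC})>\tfrac14ab\sin C$ directly, using $\tan(a/2)\tan(b/2)\ge$ its quadratic lower bound near the origin and the blow-up of $\tan$ near $\pi/2$ for large sides. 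If this becomes messy, a fallback is geodesic polar coordinates at $p_1$:
\begin{align*}
  E=\int_0^C\bigl(1-\cos\rho(\phi)\bigr)\,\diff\phi,
\end{align*}
where $\rho(\phi)$ is the angular distance to the opposite side. One would then compare $1-\cos\rho$ from below with the corresponding planar quantity $\rho^2/2$ up to the needed factor, using that the opposite side is a great-circle arc with $\tan\rho(\phi)$ of explicit form.

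Finally, the sharpness claim \eqref{eq:area-angle-flat-limit} follows from the excess formula directly. As $a,b\to0$ we have $u=\tfrac{ab}4(1+o(1))$ and $E/2=u\sin C(1+O(u))$, uniformly in $C$. Therefore
\begin{align*}
  \frac{\mu_K^{\mathrm{area}}}{\mu_K^{\mathrm{sr}}}
  =
  \frac{ab\sin C}{2E}
  \longrightarrow1.
\end{align*}
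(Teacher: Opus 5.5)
\textbf{Verdict: genuine gap.} The main inequality $E>\frac12ab\sin C$ is not established. Your handling of the equivalence of \eqref{eq:area-below-angle} with \eqref{eq:spherical-area-angle-lower}, the branch identity $E/2=\arg(1+ue^{iC})$, and the flat-limit argument are all fine and agree with the paper.

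Your reduction to $u_0=ab/4$ fails whenever $\cos C\geq0$. In that case $1+2v\cos C+v^2>1$ for $v>0$, so $g'<0$ on $(0,\infty)$ and $\arg(1+u_0e^{iC})<u_0\sin C$ strictly. The sufficient condition you reduce to is therefore false for every element whose largest angle is at most $\pi/2$, for example all nearly equilateral ones. You have only proved the subcase $C>\pi/2$, $u_0\leq-2\cos C$.

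Keeping the slack $u>u_0$ does not by itself rescue the argument, because $E>\frac12ab\sin C$ is false for a general included angle. For $a=b=0.2$ and $C=0.3$ one finds $E\approx5.893\times10^{-3}<5.910\times10^{-3}\approx\frac12ab\sin C$; here $C$ is the smallest angle. So the hypothesis that $C=\alpha_1$ is the \emph{largest} angle must enter quantitatively. Your plan lists this constraint but gives no mechanism for using it. The polar-coordinate fallback only restates the difficulty, since $1-\cos\rho<\rho^2/2$ points the wrong way.

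The missing step, which is the heart of the paper's proof, turns maximality into an explicit upper bound for $c:=\cos\alpha_K$:
\begin{enumerate}[label=\textup{(\arabic*)}]
\item By Lemma~\ref{lem:spherical-side-angle-ordering}, $\theta_{23}\geq a,b$. Inserting $\cos\theta_{23}\leq\cos a$ and $\cos\theta_{23}\leq\cos b$ into the cosine law gives $c\leq\cot a\tan\frac b2$ and $c\leq\cot b\tan\frac a2$.
\item Relabel so that $a\geq b$ and set $X:=\tan\frac a2\geq Y:=\tan\frac b2$. The two bounds then combine to $c\leq m:=\frac12(Y/X-XY)$, and $m\geq c>-1$.
\item Write $E=2\sin\alpha_K\int_0^{XY}\frac{\diff t}{1+2ct+t^2}$. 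The integrand is decreasing in $c$ and stays positive on $[c,m]$, so you may replace $c$ by $m$.
\item The substitution $t=XYs$ turns the denominator into $1+Y^2s-X^2Y^2s(1-s)\leq1+Y^2s$. Hence
\[
\frac{E}{2\sin\alpha_K}>\frac{X}{Y}\log(1+Y^2)\geq\frac{a}{b}\cdot2\log\sec\frac b2>\frac{ab}{4}.
\]
This uses that $\tan t/t$ is increasing and that $2\log\sec y>y^2$.
\end{enumerate}
The tangent slack you wanted to exploit does appear in the last step, but it only suffices in combination with the maximality bound on $c$.
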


\begin{proof}
  We set
  \begin{align*}
    u:=\theta_{12},
    \quad
    v:=\theta_{13},
    \quad
    x:=\frac u2,
    \quad
    y:=\frac v2,
  \end{align*}
  and, after interchanging $p_2$ and $p_3$ if necessary, assume $x\geq y$. We set
  \begin{align*}
    X:=\tan x,
    \quad
    Y:=\tan y,
    \quad
    c:=\cos\alpha_K,
    \quad
    \mathcal E:=\frac{|K|_2}{R^2}.
  \end{align*}
  A classical spherical-excess formula \cite[Chapter VIII]{Todhunter1886} gives
  \begin{align}
    \tan\frac{\mathcal E}{2}
    =
    \frac{
      XY\sin\alpha_K
    }{
      1+XY\cos\alpha_K
    }.
    \label{eq:spherical-excess-tangent}
  \end{align}
Because $0<\mathcal E<2\pi$ and $0<\alpha_K<\pi$, the point
  \begin{align*}
    1+te^{i\alpha_K}
  \end{align*}
  lies in the open upper half-plane for every $t>0$. Therefore, its argument, chosen continuously from zero along
  \begin{align*}
    t\longmapsto1+te^{i\alpha_K},
    \quad
    0\leq t\leq XY,
  \end{align*}
  lies in $(0,\pi)$. Therefore, \eqref{eq:spherical-excess-tangent} yields the branch identity
  \begin{align*}
    \mathcal E
    =
    2\arg\left(
      1+XYe^{i\alpha_K}
    \right).
  \end{align*}
  The path does not meet the origin, and differentiating its argument gives
  \begin{align}
    \mathcal E
    =
    2\sin\alpha_K\,I(c),
    \quad
    I(c)
    :=
    \int_0^{XY}
    \frac{\diff t}{1+2ct+t^2}.
    \label{eq:spherical-excess-integral}
  \end{align}
  Let $w:=\theta_{23}$. Because $\alpha_K=\alpha_1$ is the largest spherical interior angle, Lemma~\ref{lem:spherical-side-angle-ordering} gives
  \begin{align*}
    w\geq u,
    \quad
    w\geq v.
  \end{align*}
  The spherical cosine law reads
  \begin{align}
    \cos w
    =
    \cos u\cos v
    +
    \sin u\sin v\,c.
    \label{eq:area-proof-cosine-law}
  \end{align}
  Because $w\geq u$ and cosine is decreasing on $(0,\pi)$,
  \begin{align*}
    \cos w\leq\cos u.
  \end{align*}
  Using \eqref{eq:area-proof-cosine-law}, we have
  \begin{align*}
    \sin u\sin v\,c
    &\leq
    \cos u(1-\cos v),
  \end{align*}
  and therefore
  \begin{align*}
    c
    &\leq
    \cot u\,
    \frac{1-\cos v}{\sin v}
    =
    \cot u\tan\frac v2.
  \end{align*}
  Similarly, $w\geq v$ gives
  \begin{align*}
    c
    \leq
    \cot v\tan\frac u2.
  \end{align*}
  Therefore,
  \begin{align}
    c
    \leq
    \min\left\{
      \cot u\tan\frac v2,
      \cot v\tan\frac u2
    \right\}.
    \label{eq:area-proof-c-bound}
  \end{align}
  Because $x\geq y$, one has $X\geq Y$. Using
  \begin{align*}
    \cot(2x)
    =
    \frac{1-X^2}{2X},
    \quad
    \cot(2y)
    =
    \frac{1-Y^2}{2Y},
  \end{align*}
  gives
  \begin{align*}
    \cot u\tan\frac v2
    &=
    \frac12
    \left(
      \frac YX-XY
    \right),
    \quad
    \cot v\tan\frac u2
    =
    \frac12
    \left(
      \frac XY-XY
    \right).
  \end{align*}
  Because $X\geq Y$,
  \begin{align*}
    \frac YX\leq\frac XY,
  \end{align*}
  and therefore
  \begin{align}
    c
    \leq
    \frac12
    \left(
      \frac YX-XY
    \right)
    =:m.
    \label{eq:area-proof-m}
  \end{align}
  For $q>-1$, we define
  \begin{align*}
    I(q)
    :=
    \int_0^{XY}
    \frac{\diff t}{1+2qt+t^2}.
  \end{align*}
  Because $m\geq c>-1$, the denominator is positive for any $q\in[c,m]$ and $t\geq0$. Indeed, this is immediate if $q\geq0$, while for $-1<q<0$,
  \begin{align*}
    1+2qt+t^2
    =
    (t+q)^2+1-q^2
    >
    0.
  \end{align*}
  Furthermore,
  \begin{align*}
    \frac{\partial}{\partial q}
    \frac{1}{1+2qt+t^2}
    =
    -\frac{2t}{(1+2qt+t^2)^2}
    \leq0.
  \end{align*}
  Thus, the integrand is decreasing with respect to $q$, and
  \begin{align*}
    I(c)\geq I(m).
  \end{align*}
  Substituting $t=XYs$ and using \eqref{eq:area-proof-m} gives
  \begin{align*}
    I(m)
    &=
    XY
    \int_0^1
    \frac{
      \diff s
    }{
      1+Y^2s-X^2Y^2s(1-s)
    }
    >
    XY
    \int_0^1
    \frac{
      \diff s
    }{
      1+Y^2s
    }
    =
    \frac XY
    \log(1+Y^2).
  \end{align*}
  The function $\tan t/t$ is increasing on $(0,\pi/2)$, and hence
  \begin{align*}
    \frac XY
    \geq
    \frac xy.
  \end{align*}
  Furthermore,
  \begin{align*}
    \log(1+Y^2)
    =
    2\log\sec y.
  \end{align*}
  The function
  \begin{align*}
    f(y)
    :=
    2\log\sec y-y^2
  \end{align*}
  satisfies $f(0)=0$ and
  \begin{align*}
    f'(y)
    =
    2(\tan y-y)
    >
    0,
    \quad
    y>0.
  \end{align*}
  Therefore,
  \begin{align*}
    \log(1+Y^2)>y^2.
  \end{align*}
  Consequently,
  \begin{align*}
    I(c)
    >
    \frac XY\log(1+Y^2)
    \geq
    \frac xy\log(1+Y^2)
    >
    xy.
  \end{align*}
  Using \eqref{eq:spherical-excess-integral}, we obtain
  \begin{align*}
    \mathcal E
    >
    2xy\sin\alpha_K
    =
    \frac{uv}{2}\sin\alpha_K.
  \end{align*}
  Because
  \begin{align*}
    \mathcal E
    =
    \frac{|K|_2}{R^2},
    \quad
    u=\theta_{12},
    \quad
    v=\theta_{13},
  \end{align*}
  this proves \eqref{eq:spherical-area-angle-lower}, and hence \eqref{eq:area-below-angle}.

  It remains to prove sharpness. Suppose that $u,v\to0$. Then,
  $x,y\to0$ and
  \begin{align*}
    \frac{XY}{xy}
    =
    \frac{\tan x}{x}
    \frac{\tan y}{y}
    \longrightarrow1.
  \end{align*}
  Because $c\in[-1,1]$,
  \begin{align*}
    (1-t)^2
    \leq
    1+2ct+t^2
    \leq
    (1+t)^2.
  \end{align*}
  For all sufficiently small $XY<1$, this holds with positive lower bound for any $0\leq t\leq XY$. Taking reciprocals and integrating gives
  \begin{align*}
    \frac{XY}{1+XY}
    \leq
    I(c)
    \leq
    \frac{XY}{1-XY}.
  \end{align*}
  Therefore,
  \begin{align*}
    \frac{I(c)}{XY}
    \longrightarrow1
  \end{align*}
  uniformly for $c\in[-1,1]$. Because $XY/(xy)\to1$, it follows that
  \begin{align*}
    \frac{I(c)}{xy}
    \longrightarrow1.
  \end{align*}
  Finally, by \eqref{eq:spherical-excess-integral},
  \begin{align*}
    \frac{
      \mu_K^{\mathrm{area}}
    }{
      \mu_K^{\mathrm{sr}}
    }
    =
    \frac{
      R^2uv/|K|_2
    }{
      2/\sin\alpha_K
    }
    =
    \frac{uv\sin\alpha_K}{2\mathcal E}
    =
    \frac{xy}{I(c)}
    \longrightarrow1.
  \end{align*}
\end{proof}

\begin{theorem}[Sharpened area comparison]
  \label{thm:area-sharp-comparison}
  For any non-degenerate exact spherical triangle,
  \begin{align}
    \chi_K^{-2}\mu_K^{\mathrm{circ}}
    <
    \mu_K^{\mathrm{area}}
    <
    \chi_K\mu_K^{\mathrm{circ}},
    \label{eq:area-chordal-sharp-comparison}
  \end{align}
  and
  \begin{align}
    \chi_K^{-3}\mu_K^{\mathrm{sr}}
    <
    \mu_K^{\mathrm{area}}
    <
    \mu_K^{\mathrm{sr}}.
    \label{eq:area-spherical-sharp-comparison}
  \end{align}
  Furthermore, the exponent one in the upper bound of \eqref{eq:area-chordal-sharp-comparison} is optimal. More precisely, for any $\sigma<1$,
  \begin{align}
    \sup_K
    \frac{
      \mu_K^{\mathrm{area}}
    }{
      \chi_K^\sigma
      \mu_K^{\mathrm{circ}}
    }
    =
    \infty,
    \label{eq:area-upper-exponent-optimality}
  \end{align}
  where the supremum is taken over all non-degenerate exact spherical triangles.
\end{theorem}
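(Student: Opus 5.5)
The two inequality chains in the statement follow by combining comparisons already proved in the paper, so the work is mostly bookkeeping. The genuinely new part is the optimality claim \eqref{eq:area-upper-exponent-optimality}, for which I would build an explicit family of triangles.

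\emph{Lower bounds.} The lower bound $\chi_K^{-2}\mu_K^{\mathrm{circ}}<\mu_K^{\mathrm{area}}$ in \eqref{eq:area-chordal-sharp-comparison} is exactly Lemma~\ref{lem:spherical-area-parameter}. For the lower bound in \eqref{eq:area-spherical-sharp-comparison}, I would take the strict inequality $\chi_K^{-1}\mu_K^{\mathrm{sr}}<\mu_K^{\mathrm{circ}}$ from Theorem~\ref{thm:sharp-semiregularity-comparison} and multiply it by $\chi_K^{-2}$. Chaining with Lemma~\ref{lem:spherical-area-parameter} then gives
\begin{align*}
  \mu_K^{\mathrm{area}}>\chi_K^{-2}\mu_K^{\mathrm{circ}}>\chi_K^{-3}\mu_K^{\mathrm{sr}}.
\end{align*}

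\emph{Upper bounds.} The upper bound $\mu_K^{\mathrm{area}}<\mu_K^{\mathrm{sr}}$ is Proposition~\ref{prop:area-below-angle}. For the upper bound in \eqref{eq:area-chordal-sharp-comparison}, I would apply Theorem~\ref{thm:angle-comparison} with $i=1$. Together with \eqref{eq:canonical-largest-angles} it gives $\sin\alpha_K\ge\chi_K^{-1}\sin\beta_T$. Hence
\begin{align*}
  \mu_K^{\mathrm{sr}}=\frac{2}{\sin\alpha_K}\le\chi_K\frac{2}{\sin\beta_T}=\chi_K\mu_T=\chi_K\mu_K^{\mathrm{circ}}.
\end{align*}
Combining this with the strict inequality of Proposition~\ref{prop:area-below-angle} yields $\mu_K^{\mathrm{area}}<\chi_K\mu_K^{\mathrm{circ}}$.

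\emph{Exponent optimality.} This is the main obstacle: I need triangles with $\chi_K\to\infty$ on which the upper bound $\mu_K^{\mathrm{area}}<\chi_K\mu_K^{\mathrm{circ}}$ is asymptotically attained. The coarse equilateral family of Example~\ref{ex:coarse-equilateral-spherical} does not work, because there $|K|_2\to2\pi R^2$ and so $\mu_K^{\mathrm{area}}$ stays bounded. I would instead write
\begin{align*}
  \frac{\mu_K^{\mathrm{area}}}{\mu_K^{\mathrm{circ}}}
  =
  \frac{\mu_K^{\mathrm{area}}}{\mu_K^{\mathrm{sr}}}\cdot\chi_K\sqrt{(1-a_T^2)(1-b_T^2)},
\end{align*}
which follows from \eqref{eq:exact-comparison}. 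The goal is then a family in which the whole triangle is small, so that $\theta_{12},\theta_{13}\to0$ and hence $a_T,b_T\to0$, while its supporting plane passes near the origin, so that $\chi_K\to\infty$.

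Concretely, I would fix $t\in(0,1)$ and the plane $\{x_3=tR\}$, whose section of $\Sph_R^2$ is a circle of radius $R\sqrt{1-t^2}$. On this circle I would place $p_1$ at polar angle $0$ and $p_2,p_3$ at polar angles $\pm t$, exactly as in Example~\ref{ex:refining-nonsemiregular} but with support distance $d_T=tR$. Then $\chi_K=1/t\to\infty$ as $t\downarrow0$, while all chords are $O(Rt)$. Consequently $\theta_{12},\theta_{13}\to0$ and $a_T,b_T\to0$.

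Proposition~\ref{prop:area-below-angle}, via \eqref{eq:area-angle-flat-limit}, gives $\mu_K^{\mathrm{area}}/\mu_K^{\mathrm{sr}}\to1$. Therefore $\mu_K^{\mathrm{area}}/(\chi_K\mu_K^{\mathrm{circ}})\to1$. For any $\sigma<1$ this means
\begin{align*}
  \frac{\mu_K^{\mathrm{area}}}{\chi_K^{\sigma}\mu_K^{\mathrm{circ}}}\sim\chi_K^{1-\sigma}\to\infty,
\end{align*}
which proves \eqref{eq:area-upper-exponent-optimality}. The only point I would check carefully is that these configurations satisfy the standing hypotheses: non-collinearity holds because the three points are distinct on a circle, and $0\notin\aff(T)$ holds because $d_T=tR>0$.
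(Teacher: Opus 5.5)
Your proposal is correct and follows the paper's proof. The inequalities come from the same chaining of Lemma~\ref{lem:spherical-area-parameter}, Proposition~\ref{prop:area-below-angle} and the relation $\mu_K^{\mathrm{sr}}\le\chi_K\mu_K^{\mathrm{circ}}$, which you take from Theorem~\ref{thm:angle-comparison} where the paper quotes the equivalent strict lower bound of Theorem~\ref{thm:sharp-semiregularity-comparison}. Optimality uses the same three-points-on-a-support-circle family, combining $\mu_K^{\mathrm{area}}/\mu_K^{\mathrm{sr}}\to1$ with \eqref{eq:exact-comparison}. The one difference is that you tie $\chi_K=1/t$ to the angular spread in a single parameter, whereas the paper fixes $\chi$, lets $s\downarrow0$, and then takes $\chi$ large. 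Your coupling is legitimate because the flat-limit convergence in Proposition~\ref{prop:area-below-angle} depends only on $\theta_{12},\theta_{13}$ and is uniform in $\cos\alpha_K$, hence unaffected by $\chi_K\to\infty$ and $\alpha_K\to\pi$.
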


\begin{proof}
  From \eqref{eq:intrinsic-chordal-bridge},
  \begin{align*}
    \mu_K^{\mathrm{circ}}
    =
    \mu_T,
    \quad
    \chi_K
    =
    \chi_T.
  \end{align*}
  Lemma~\ref{lem:spherical-area-parameter} gives
  \begin{align*}
    \mu_K^{\mathrm{area}}
    >
    \chi_K^{-2}\mu_K^{\mathrm{circ}}.
  \end{align*}
  On the other hand, Proposition~\ref{prop:area-below-angle} gives
  \begin{align*}
    \mu_K^{\mathrm{area}}
    <
    \mu_K^{\mathrm{sr}},
  \end{align*}
  while the strict lower bound in Theorem~\ref{thm:sharp-semiregularity-comparison} gives
  \begin{align*}
    \chi_K^{-1}\mu_K^{\mathrm{sr}}
    <
    \mu_K^{\mathrm{circ}}.
  \end{align*}
  Equivalently,
  \begin{align*}
    \mu_K^{\mathrm{sr}}
    <
    \chi_K\mu_K^{\mathrm{circ}}.
  \end{align*}
  Therefore,
  \begin{align*}
    \mu_K^{\mathrm{area}}
    <
    \mu_K^{\mathrm{sr}}
    <
    \chi_K\mu_K^{\mathrm{circ}},
  \end{align*}
  which proves the upper bound in \eqref{eq:area-chordal-sharp-comparison}.

  Combining
  \begin{align*}
    \mu_K^{\mathrm{area}}
    >
    \chi_K^{-2}\mu_K^{\mathrm{circ}}
  \end{align*}
  with
  \begin{align*}
    \mu_K^{\mathrm{circ}}
    >
    \chi_K^{-1}\mu_K^{\mathrm{sr}}
  \end{align*}
  gives
  \begin{align*}
    \mu_K^{\mathrm{area}}
    >
    \chi_K^{-3}\mu_K^{\mathrm{sr}}.
  \end{align*}
  Together with Proposition~\ref{prop:area-below-angle}, this proves \eqref{eq:area-spherical-sharp-comparison}.

  It remains to prove the optimality of exponent one in the upper bound of \eqref{eq:area-chordal-sharp-comparison}. Fix $\chi>1$ and set
  \begin{align*}
    d
    :=
    \frac{R}{\chi},
    \quad
    \rho
    :=
    R\sqrt{1-\chi^{-2}}.
  \end{align*}
  Then,
  \begin{align*}
    d^2+\rho^2
    =
    R^2.
  \end{align*}
  Therefore, the intersection of $\Sph_R^2$ with a plane at distance $d$ from the origin is a circle of radius $\rho$. For
  \begin{align*}
    0<s<\frac{2\pi}{3},
  \end{align*}
  we choose three points on this circle at polar angles $-s$, $0$, and $s$, and label the point at polar angle $0$ as $p_1$. The two chordal edges issuing from $p_1$ have the common length
  \begin{align*}
    h_{T,1}
    =
    h_{T,2}
    =
    2\rho\sin\frac{s}{2},
  \end{align*}
  while the opposite chordal edge has length
  \begin{align*}
    h_T
    =
    2\rho\sin s.
  \end{align*}
  Because
  \begin{align*}
    \frac{\sin s}{\sin(s/2)}
    =
    2\cos\frac{s}{2}
    >
    1
    \quad
    \left(
      0<s<\frac{2\pi}{3}
    \right),
  \end{align*}
  the opposite edge is the unique longest chordal edge. The corresponding chordal interior angles are
  \begin{align*}
    \beta_T
    =
    \pi-s,
    \quad
    \beta_2
    =
    \beta_3
    =
    \frac{s}{2}.
  \end{align*}
  Thus, the labelling agrees with Convention~\ref{conv:chordal-labelling}. Furthermore,
  \begin{align*}
    \chi_K
    =
    \chi_T
    =
    \frac{R}{d}
    =
    \chi.
  \end{align*}
  The normalised adjacent chord lengths are
  \begin{align*}
    a_T
    =
    b_T
    =
    \frac{\rho}{R}
    \sin\frac{s}{2}
    =
    \sqrt{1-\chi^{-2}}
    \sin\frac{s}{2},
  \end{align*}
  and therefore
  \begin{align*}
    a_T,b_T
    \longrightarrow0
    \quad
    (s\downarrow0).
  \end{align*}
  The exact comparison \eqref{eq:exact-comparison} gives
  \begin{align*}
    \frac{
      \mu_K^{\mathrm{circ}}
    }{
      \mu_K^{\mathrm{sr}}
    }
    &=
    \frac{
      1
    }{
      \chi
      \sqrt{
        (1-a_T^2)(1-b_T^2)
      }
    }
    \longrightarrow
    \chi^{-1}
    \quad
    (s\downarrow0).
  \end{align*}
  At the same time,
  \begin{align*}
    |p_1-p_2|
    =
    |p_1-p_3|
    =
    2\rho\sin\frac{s}{2}
    \longrightarrow0.
  \end{align*}
  Because
  \begin{align*}
    |p_i-p_j|
    =
    2R\sin\frac{\theta_{ij}}2,
  \end{align*}
  it follows that
  \begin{align*}
    \theta_{12}\to0,
    \quad
    \theta_{13}\to0.
  \end{align*}
  Proposition~\ref{prop:area-below-angle} therefore yields
  \begin{align*}
    \frac{
      \mu_K^{\mathrm{area}}
    }{
      \mu_K^{\mathrm{sr}}
    }
    \longrightarrow1.
  \end{align*}
  Consequently,
  \begin{align}
    \frac{
      \mu_K^{\mathrm{area}}
    }{
      \mu_K^{\mathrm{circ}}
    }
    &=
    \frac{
      \mu_K^{\mathrm{area}}/
      \mu_K^{\mathrm{sr}}
    }{
      \mu_K^{\mathrm{circ}}/
      \mu_K^{\mathrm{sr}}
    }
    \longrightarrow
    \chi.
    \label{eq:area-upper-sharp-family}
  \end{align}
  Thus, the factor $\chi_K$ in the upper bound of \eqref{eq:area-chordal-sharp-comparison} is asymptotically attained for any fixed $\chi>1$.

  Finally, let $\sigma<1$ and $C>0$ be arbitrary. We choose $\chi>1$ sufficiently large that
  \begin{align*}
    \chi^{1-\sigma}>2C.
  \end{align*}
  By \eqref{eq:area-upper-sharp-family}, for this fixed $\chi$ we may choose $s>0$ sufficiently small that
  \begin{align*}
    \frac{
      \mu_K^{\mathrm{area}}
    }{
      \mu_K^{\mathrm{circ}}
    }
    >
    \frac{\chi}{2}.
  \end{align*}
  Because $\chi_K=\chi$,
  \begin{align*}
    \frac{
      \mu_K^{\mathrm{area}}
    }{
      \chi_K^\sigma
      \mu_K^{\mathrm{circ}}
    }
    >
    \frac12
    \chi^{1-\sigma}
    >
    C.
  \end{align*}
  Because $C>0$ is arbitrary,
  \begin{align*}
    \sup_K
    \frac{
      \mu_K^{\mathrm{area}}
    }{
      \chi_K^\sigma
      \mu_K^{\mathrm{circ}}
    }
    =
    \infty.
  \end{align*}
  Therefore, the exponent one in \eqref{eq:area-chordal-sharp-comparison} is optimal.
\end{proof}

In particular, under a uniform spherical-locality bound $\chi_K\leq\chi_0$, the circumradius parameter $\mu_K^{\mathrm{circ}}$, the exact spherical semi-regularity parameter $\mu_K^{\mathrm{sr}}$, and the area-based parameter $\mu_K^{\mathrm{area}}$ are uniformly quantitatively equivalent. Nevertheless, the three parameters encode different geometric information at finite curvature scale.

\begin{remark}[Local flat limit of the semi-regularity parameters]
  \label{rem:local-flat-semiregularity}
  Suppose that a uniformly chordally semi-regular refining family satisfies
  \begin{align*}
    q_h\longrightarrow0.
  \end{align*}
  Corollary~\ref{cor:refining-locality} gives
  \begin{align*}
    \max_K\chi_K
    \longrightarrow1.
  \end{align*}
  Theorem~\ref{thm:sharp-semiregularity-comparison} then yields, uniformly over the mesh,
  \begin{align*}
    \frac{
      \mu_K^{\mathrm{circ}}
    }{
      \mu_K^{\mathrm{sr}}
    }
    \longrightarrow1.
  \end{align*}
  Because $h_{T_K}$ is a longest chordal edge of $T_K$, every chordal edge satisfies
  \begin{align*}
    |p_i-p_j|
    \leq
    h_{T_K}.
  \end{align*}
  Since $q_h\to0$, the identity
  \begin{align*}
    |p_i-p_j|
    =
    2R\sin\frac{\theta_{ij}}2
  \end{align*}
  implies that all central side angles tend to zero uniformly over the mesh. In particular,
  \begin{align*}
    \theta_{12},\theta_{13}
    \longrightarrow0
  \end{align*}
  uniformly. The flat-limit argument in the proof of Proposition~\ref{prop:area-below-angle} is uniform with respect to $c=\cos\alpha_K\in[-1,1]$, and therefore
  \begin{align*}
    \frac{
      \mu_K^{\mathrm{area}}
    }{
      \mu_K^{\mathrm{sr}}
    }
    \longrightarrow1
  \end{align*}
  uniformly over the mesh. Consequently,
  \begin{align*}
    \mu_T
    =
    \mu_K^{\mathrm{circ}}
    \sim
    \mu_K^{\mathrm{sr}}
    \sim
    \mu_K^{\mathrm{area}}
  \end{align*}
  uniformly in the local flat limit, although these parameters describe distinct geometric quantities at finite curvature scale.
\end{remark}

\section{Implications for spherical finite-element meshes}
\label{sec:fem-interpretation}
The preceding geometric results can now be organised according to the two-stage map from a fixed reference triangle to an exact spherical element. 

Let $\widehat T:=\conv\{(0,0)^{\top},(1,0)^{\top},(0,1)^{\top}\}\subset\mathbb R^2$ be the standard reference triangle and let
\begin{align*}
  \Phi_T:\widehat T\longrightarrow T
\end{align*}
be the affine parametrisation of the chordal core, written as
\begin{align}
  \Phi_T(\hat x)
  =
  p_1+B_T\hat x,
  \label{eq:reference-to-chordal-map}
\end{align}
where the columns of $B_T$ are the two chordal edge vectors issuing from $p_1$. The corresponding exact spherical parametrisation is
\begin{align}
  F_K
  :=
  \Psi_T\circ\Phi_T:
  \widehat T
  \longrightarrow
  K,
  \quad
  F_K(\hat x)
  =
  R\frac{\Phi_T(\hat x)}
          {|\Phi_T(\hat x)|}.
  \label{eq:reference-to-spherical-map}
\end{align}
Its differential factorises as
\begin{align}
  DF_K(\hat x)
  =
  D\Psi_T\bigl(\Phi_T(\hat x)\bigr)B_T.
  \label{eq:two-stage-differential}
\end{align}

This factorisation separates two distinct geometric effects. The affine map $\Phi_T$ carries the anisotropy of the chordal triangle, whereas the radial map $\Psi_T$ carries the curvature-induced distortion between $T$ and $K$. The results of the preceding sections provide precisely the corresponding controls. The intrinsic vertex condition $t_K^{\mathrm{sr}}\geq c_{\mathrm{sr}}$ yields uniform chordal semi-regularity through the sharp comparison
\begin{align*}
  \mu_{T_K}
  =
  \mu_K^{\mathrm{circ}}
  \leq
  \frac{2}{\sqrt3}\mu_K^{\mathrm{sr}},
\end{align*}
while relative refinement controls the radial stage through $\vartheta_{T_K}$ and $\chi_{T_K}$. In particular, the singular-value and Jacobian estimates for $D\Psi_T$ obtained in Section~\ref{sec:geometry} become asymptotically flat when $\chi_{T_K}\to1$.

Thus, the intrinsic spherical maximum-angle condition controls the chordal geometry required by the affine stage, whereas spherical locality controls the distortion introduced by the exact radial parametrisation. The following corollary packages these two controls for use in future anisotropic analysis.

\begin{corollary}[Uniform geometric control for spherical finite-element meshes]
  \label{cor:uniform-spherical-mesh-control}
  \label{prop:fem-mesh-criterion}
  \label{prop:intrinsic-spherical-mesh-condition}
  Let $\{\mathscr K_h\}$ be non-degenerate exact spherical triangulations and
  assume that, for some $c_{\mathrm{sr}}>0$,
  \begin{align}
    \inf_h\inf_{K\in\mathscr K_h}t_K^{\mathrm{sr}}
    \geq c_{\mathrm{sr}}.
    \label{eq:partII-intrinsic-geometric-condition}
  \end{align}
  Then,
  \begin{align}
    \sup_h\max_{K\in\mathscr K_h}\mu_{T_K}
    \leq
    \frac{4}{\sqrt3\,c_{\mathrm{sr}}}.
    \label{eq:partII-uniform-chordal-control}
  \end{align}
  If, in addition, the relative chordal diameter $q_h$ defined in
  \eqref{eq:relative-chordal-diameter} satisfies
  \begin{align}
    q_h\longrightarrow0,
    \label{eq:partII-relative-refinement}
  \end{align}
  then
  \begin{align}
    \max_{K\in\mathscr K_h}\vartheta_{T_K}
    \leq
    \frac{q_h}{\sqrt3\,c_{\mathrm{sr}}}.
    \label{eq:partII-uniform-locality-control}
  \end{align}
  Consequently,
  \begin{align}
    \max_{K\in\mathscr K_h}\vartheta_{T_K}
    \longrightarrow0,
    \quad
    \max_{K\in\mathscr K_h}\chi_{T_K}
    \longrightarrow1.
    \label{eq:partII-uniform-radial-limit}
  \end{align}
\end{corollary}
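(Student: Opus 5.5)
The corollary is a direct assembly of earlier results, and I would prove it in three short steps.

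\emph{Step 1: chordal control.} From \eqref{eq:partII-intrinsic-geometric-condition} and Definition~\ref{def:spherical-semiregularity}, every element satisfies
\begin{align*}
  \mu_K^{\mathrm{sr}}=\frac{2}{t_K^{\mathrm{sr}}}\leq\frac{2}{c_{\mathrm{sr}}}.
\end{align*}
I then apply Corollary~\ref{cor:spherical-implies-chordal}, which rests on the locality-free comparison of Theorem~\ref{thm:global-semiregularity-comparison} and the bridge identity $\mu_{T_K}=\mu_K^{\mathrm{circ}}$ of Theorem~\ref{thm:intrinsic-circumradius-representation}. With $\gamma_{\mathrm{sr}}=2/c_{\mathrm{sr}}$ it gives
\begin{align*}
  \mu_{T_K}\leq\frac{2}{\sqrt3}\cdot\frac{2}{c_{\mathrm{sr}}}=\frac{4}{\sqrt3\,c_{\mathrm{sr}}}.
\end{align*}
Since this bound is uniform in $h$ and $K$, taking the supremum yields \eqref{eq:partII-uniform-chordal-control}.

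\emph{Step 2: locality bound.} I set $\gamma_0:=4/(\sqrt3\,c_{\mathrm{sr}})$ and use the identity from the proof of Corollary~\ref{cor:refining-locality}:
\begin{align*}
  \vartheta_{T_K}=\frac{H_{T_K}}{4R}=\frac{\mu_{T_K}}{4}\,\frac{h_{T_K}}{R}.
\end{align*}
By Step 1 and the definition \eqref{eq:relative-chordal-diameter} of $q_h$, this gives
\begin{align*}
  \vartheta_{T_K}\leq\frac{\gamma_0}{4}\,q_h=\frac{q_h}{\sqrt3\,c_{\mathrm{sr}}}.
\end{align*}
This is \eqref{eq:partII-uniform-locality-control}. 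The step holds for every $h$, regardless of whether $q_h\to0$.

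\emph{Step 3: limits.} If $q_h\to0$, the bound of Step 2 sends $\max_K\vartheta_{T_K}$ to zero. By Theorem~\ref{thm:support-plane-localisation} we have $\chi_{T}=(1-\vartheta_T^2)^{-1/2}$, and $s\mapsto(1-s^2)^{-1/2}$ is increasing and continuous at $s=0$ with value $1$. Hence $\max_K\chi_{T_K}\to1$, which is \eqref{eq:partII-uniform-radial-limit}. Alternatively, Corollary~\ref{cor:refining-locality} with $\gamma_0$ as above delivers both limits at once.

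There is no real obstacle here. The only point needing care is the constant bookkeeping: $\gamma_0/4=1/(\sqrt3\,c_{\mathrm{sr}})$ must match the stated locality constant. One should also note that Step 1 uses no locality hypothesis, so the refinement assumption enters only in Step 3.
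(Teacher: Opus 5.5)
Your proposal is correct and follows the paper's proof step for step: $\mu_K^{\mathrm{sr}}\le 2/c_{\mathrm{sr}}$ is fed into Corollary~\ref{cor:spherical-implies-chordal}, then the identity $\vartheta_{T_K}=(\mu_{T_K}/4)(h_{T_K}/R)$ with $h_{T_K}/R\le q_h$ gives the locality bound, and the monotonicity of $s\mapsto(1-s^2)^{-1/2}$ gives $\max_K\chi_{T_K}\to1$. Your observation that the locality bound holds for every $h$, independently of $q_h\to0$, is also accurate.
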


\begin{proof}
  From Theorem~\ref{thm:spherical-sr-maximum-angle-identity},
  \begin{align*}
    \mu_K^{\mathrm{sr}}
    =
    \frac{2}{t_K^{\mathrm{sr}}}
    \leq
    \frac{2}{c_{\mathrm{sr}}}
    \quad
    \forall K\in\mathscr K_h.
  \end{align*}
  Corollary~\ref{cor:spherical-implies-chordal} therefore gives
  \begin{align*}
    \mu_{T_K}
    =
    \mu_K^{\mathrm{circ}}
    \leq
    \frac{2}{\sqrt3}\,
    \mu_K^{\mathrm{sr}}
    \leq
    \frac{4}{\sqrt3\,c_{\mathrm{sr}}},
  \end{align*}
  uniformly in $h$ and $K\in\mathscr K_h$. This proves \eqref{eq:partII-uniform-chordal-control}.

  The exact circumradius identity gives
  \begin{align*}
    \vartheta_{T_K}
    =
    \frac{H_{T_K}}{4R}
    =
    \frac{\mu_{T_K}}{4}
    \frac{h_{T_K}}{R}.
  \end{align*}
  Because
  \begin{align*}
    \frac{h_{T_K}}{R}
    \leq
    q_h,
  \end{align*}
  the bound \eqref{eq:partII-uniform-chordal-control} yields
  \begin{align*}
    \vartheta_{T_K}
    \leq
    \frac{1}{4}
    \frac{4}{\sqrt3\,c_{\mathrm{sr}}}
    q_h
    =
    \frac{q_h}{\sqrt3\,c_{\mathrm{sr}}}.
  \end{align*}
  Taking the maximum over $K\in\mathscr K_h$ proves \eqref{eq:partII-uniform-locality-control}. Therefore,
  \begin{align*}
    \max_{K\in\mathscr K_h}\vartheta_{T_K}
    \longrightarrow0.
  \end{align*}

  Finally,
  \begin{align*}
    \chi_{T_K}
    =
    \left(
      1-\vartheta_{T_K}^2
    \right)^{-1/2}.
  \end{align*}
  Because $s\mapsto(1-s^2)^{-1/2}$ is increasing on $[0,1)$,
  \begin{align*}
    \max_{K\in\mathscr K_h}\chi_{T_K}
    =
    \left(
      1-
      \left(
        \max_{K\in\mathscr K_h}\vartheta_{T_K}
      \right)^2
    \right)^{-1/2}
    \longrightarrow1.
  \end{align*}
  This proves \eqref{eq:partII-uniform-radial-limit}.
\end{proof}

\begin{remark}[Geometric interface for future anisotropic analysis]
  \label{rem:partII-geometric-interface}
  The two conclusions of Corollary~\ref{cor:uniform-spherical-mesh-control} correspond precisely to the two stages of the parametrisation
   \begin{align*}
    \widehat T
    \xrightarrow{\ \Phi_{T_K}\ }
    T_K
    \xrightarrow{\ \Psi_{T_K}\ }
    K.
  \end{align*}
  The intrinsic vertex condition \eqref{eq:partII-intrinsic-geometric-condition} provides uniform chordal semi-regularity for the affine stage through \eqref{eq:partII-uniform-chordal-control}. Relative refinement, $q_h\to0$, controls the radial stage through \eqref{eq:partII-uniform-locality-control} and gives radial-distortion factors converging uniformly to one.

  Thus, the intrinsic spherical maximum-angle condition and relative refinement provide, respectively, the chordal maximum-angle control for the affine stage and the curvature-localisation control required by the two-stage geometric description. This geometric interface is intended for use in future anisotropic analysis. No interpolation estimate, and no claim that these geometric conditions are necessary for convergence of a particular
  finite-element method, is made here.
\end{remark}

\section{Concluding remarks}
\label{sec:conclusion}
We have obtained an exact vertex-based characterisation of the intrinsic maximum-angle condition for exact spherical triangles. The identity
\begin{align*}
  t_K^{\mathrm{sr}}=\sin\alpha_K
\end{align*}
shows that the largest spherical interior angle is determined by the three vertex vectors alone, so that the intrinsic maximum-angle condition may be tested without reference to the radial-locality parameter. The criterion does not require a minimum-angle condition and therefore admits anisotropic elements. The auxiliary chordal and intrinsic circumradius parameters are linked exactly, and the global estimate
\begin{align*}
  \mu_K^{\mathrm{circ}}
  \leq
  \frac{2}{\sqrt3}\mu_K^{\mathrm{sr}}
\end{align*}
is sharp and requires no spherical-locality assumption. The area-based parameter satisfies
\begin{align*}
  \mu_K^{\mathrm{area}}
  <
  \mu_K^{\mathrm{sr}},
\end{align*}
with sharp constant one in the local flat limit.

For spherical finite-element meshes, the intrinsic lower bound
\begin{align*}
  t_K^{\mathrm{sr}}
  \geq
  c_{\mathrm{sr}}>0
\end{align*}
gives the explicit chordal estimate
\begin{align*}
  \mu_{T_K}
  \leq
  \frac{4}{\sqrt3\,c_{\mathrm{sr}}}.
\end{align*}
Under relative refinement $q_h\to0$, the parameters $\vartheta_{T_K}$ converge uniformly to zero and the radial-distortion factors $\chi_{T_K}$ converge uniformly to one. Together with the preceding chordal estimate, these limits provide the geometric controls that we expect to enter an anisotropic interpolation analysis on exact spherical triangles.

Mathematically, two problems remain open. The first is whether the exponent $2$ in the lower bound
\begin{align*}
  \chi_K^{-2}\mu_K^{\mathrm{circ}}
  <
  \mu_K^{\mathrm{area}}
\end{align*}
can be reduced uniformly. The second is to extend the present vertex-based criterion from the sphere to a smooth surface. A natural approach is to combine local curvature information with the geometry of a suitable surface projection, such as the closest-point projection.

\section*{Declarations}

\noindent\textbf{Conflict of interest.}
The author declares that there is no conflict of interest.

\medskip
\noindent\textbf{Funding.}
No funding was received for conducting this study.

\medskip
\noindent\textbf{Data availability.}
No datasets were generated or analysed during the current study.

\end{document}